\newtheorem{theorem}{Theorem}[section]
\newtheorem{proposition}[theorem]{Proposition}
\newtheorem{lemma}[theorem]{Lemma}
\newtheorem{corollary}[theorem]{Corollary}
\theoremstyle{definition}
\newtheorem{example}[theorem]{Example}
\newtheorem{remark}[theorem]{Remark}
\newtheorem{definition}[theorem]{Definition}
\font\black=cmbx10 \font\sblack=cmbx7 \font\ssblack=cmbx5 \font\blackital=cmmib10  \skewchar\blackital='177
\font\sblackital=cmmib7 \skewchar\sblackital='177 \font\ssblackital=cmmib5 \skewchar\ssblackital='177
\font\sanss=cmss11 \font\ssanss=cmss8 scaled 900 \font\sssanss=cmss8 scaled 600 \font\blackboard=msbm10
\font\sblackboard=msbm7 \font\ssblackboard=msbm5 \font\caligr=eusm10 \font\scaligr=eusm7 \font\sscaligr=eusm5
\font\bsymb=cmsy10 scaled\magstep2
\def\all#1{\setbox0=\hbox{\lower1.5pt\hbox{\bsymb
       \char"38}}\setbox1=\hbox{$_{#1}$} \box0\lower2pt\box1\;}
\def\exi#1{\setbox0=\hbox{\lower1.5pt\hbox{\bsymb \char"39}}
       \setbox1=\hbox{$_{#1}$} \box0\lower2pt\box1\;}
\def\tx#1{{\fam0\relax#1}}
\def\bi#1{{\fam\bifam\relax#1}}
\def\sss#1{{\fam\ssfam\relax#1}}
\def\pmb#1{\setbox0\hbox{${#1}$} \copy0 \kern-\wd0 \kern.2pt \box0}
\def\pmbb#1{\setbox0\hbox{${#1}$} \copy0 \kern-\wd0
      \kern.2pt \copy0 \kern-\wd0 \kern.2pt \box0}
\def\pmbbb#1{\setbox0\hbox{${#1}$} \copy0 \kern-\wd0
      \kern.2pt \copy0 \kern-\wd0 \kern.2pt
    \copy0 \kern-\wd0 \kern.2pt \box0}
\def\pmxb#1{\setbox0\hbox{${#1}$} \copy0 \kern-\wd0
      \kern.2pt \copy0 \kern-\wd0 \kern.2pt
      \copy0 \kern-\wd0 \kern.2pt \copy0 \kern-\wd0 \kern.2pt \box0}
\def\pmxbb#1{\setbox0\hbox{${#1}$} \copy0 \kern-\wd0 \kern.2pt
      \copy0 \kern-\wd0 \kern.2pt
      \copy0 \kern-\wd0 \kern.2pt \copy0 \kern-\wd0 \kern.2pt
      \copy0 \kern-\wd0 \kern.2pt \box0}
\mathchardef\za="710B  
\mathchardef\zb="710C  
\mathchardef\zg="710D  
\mathchardef\zd="710E  
\mathchardef\zve="710F 
\mathchardef\zz="7110  
\mathchardef\zh="7111  
\mathchardef\zvy="7112 
\mathchardef\zi="7113  
\mathchardef\zk="7114  
\mathchardef\zl="7115  
\mathchardef\zm="7116  
\mathchardef\zn="7117  
\mathchardef\zx="7118  
\mathchardef\zp="7119  
\mathchardef\zr="711A  
\mathchardef\zs="711B  
\mathchardef\zt="711C  
\mathchardef\zu="711D  
\mathchardef\zvf="711E 
\mathchardef\zq="711F  
\mathchardef\zc="7120  
\mathchardef\zw="7121  
\mathchardef\ze="7122  
\mathchardef\zy="7123  
\mathchardef\zf="7124  
\mathchardef\zvr="7125 
\mathchardef\zvs="7126 
\mathchardef\zf="7127  
\mathchardef\zG="7000  
\mathchardef\zD="7001  
\mathchardef\zY="7002  
\mathchardef\zL="7003  
\mathchardef\zX="7004  
\mathchardef\zP="7005  
\mathchardef\zS="7006  
\mathchardef\zU="7007  
\mathchardef\zF="7008  
\mathchardef\zW="700A  
\newcommand{\be}{\begin{equation}}
\newcommand{\ee}{\end{equation}}
\newcommand{\lra}{\longrightarrow}
\newcommand{\bea}{\begin{eqnarray}}
\newcommand{\eea}{\end{eqnarray}}
\newcommand{\beas}{\begin{eqnarray*}}
\newcommand{\eeas}{\end{eqnarray*}}
\def\*{{\textstyle *}}
\newcommand{\nn}{\nonumber}
\newcommand{\ot}{\otimes}
\newcommand{\pa}{\partial}
\newcommand{\ti}{\times}
\newcommand{\Ll}{{\pounds}}
\def\ran{\rangle}
\def\cO{{\cal O}}
\def\cF{\mathcal{F}}
\def\Sec{\sss{Sec}}
\def\bd{{\bi d}}
\def\sH{{\sss H}}
\def\sJ{{\sss J}}
\def\sT{{\sss T}}
\def\sV{{\sss V}}
\def\xd{\tx{d}}
\def\dt{\xd_{\sss T}}
\newcommand{\la}{\langle}
\newcommand{\N}{\mathbb{N}}
\newcommand{\Z}{\mathbb{Z}}
\newcommand{\R}{\mathbb{R}}
\newcommand{\n}{\nabla}
\def\deg{\sss{deg}}
\newcommand{\cbl}{\color{blue}}
\newcommand{\Lie}{\textnormal{Lie}}
\newcommand{\rmd}{\textnormal{d}}
\def\cm{\color{magenta}}
\def\VB{{\rm V\!B}}
\newcommand{\wu}{\operatorname{deg}}
\DeclareMathOperator{\GL}{GL}
\DeclareMathOperator{\Ker}{Ker}
\DeclareMathOperator{\Exp}{Exp}
\newcommand{\GrL}{\operatorname{GL}}
\newcommand{\ZGrL}{\operatorname{ZGL}}
\newcommand{\we}{\wedge}
\newcommand{\wgh}{{\sss w}}
\begin{document}
\title{\bf VB-structures and generalizations\thanks{Research of JG founded by the  Polish National Science Center grant
under the contract number 2016/22/M/ST1/00542.
 }}
\date{}
\author{\\ Katarzyna  Grabowska$^1$\\ Janusz Grabowski$^2$ (Corresponding author)\\ Zohreh Ravanpak$^2$
        \\ \\
         $^1$ {\it Faculty of Physics}\\
                {\it University of Warsaw}\\
                \\$^2$ {\it Institute of Mathematics}\\
                {\it Polish Academy of Sciences}
                }
\maketitle

\begin{abstract} Motivated by properties of higher tangent lifts of geometric structures, we introduce concepts of \emph{weighted structures} for various geometric objects on a manifold $F$ equipped with a \emph{homogeneity structure}. The latter is a smooth action  on $F$ of the monoid $(\R,\cdot)$ of multiplicative reals. Vector bundles are particular cases of homogeneity structures and weighted structures on them we call $\VB$-structures. In  the case of Lie algebroids and Lie groupoids, the weighted structures include the concepts of $\VB$-algebroids and $\VB$-groupoids, intensively studied recently in the literature. Investigating various weighted structures, we prove some interesting results about their properties.

\smallskip\noindent
{\bf Keywords:}
vector bundles,~fibrations,~graded manifolds;~homogeneous functions;~$\VB$-structures;~tangent lifts.\par

\smallskip\noindent
{\bf MSC 2020:} \textit{Primary} ~53C15;~57R22;~58A32; \textit{Secondary}~58A20;~58A30;~58D19.	
\end{abstract}


\section{Introduction}
This is a conceptual paper introducing and studying some concepts related to graded differential geometry, in particular generalizations of $\VB$ structures.
The original concept of a $\VB$-algebroid was introduced by Pradines \cite{Pradines:1974,Pradines:1988} and it has been
further studied by Mackenzie \cite{Mackenzie:2005} and Gracia-Saz  \& Mehta \cite{Gracia:2010}, among others.
The concept of a $\VB$-groupoid one can find already in \cite{Mackenzie:1992,Mackenzie:2000} and
\cite[Section 2.1]{Mackenzie:2005}, where they are understood as double Lie groupoids for which one structure is a vector bundle.
$\VB$-algebroids and $\VB$-groupoids have turned out  to be especially important in the
infinitesimal description of Lie groupoids equipped with multiplicative geometric
structures and  as geometric models for representations up to homotopy \cite{Gracia-Saz:2010,Gracia-Saz:2017}.
The original definitions are quite complicated and refer to $\VB$-groupoids ($\VB$-algebroids) as  Lie groupoid (Lie algebroid) objects in the category of vector bundles.

These concepts were generalized in \cite{Bruce:2016} in much simpler terms  by using so called \emph{homogeneity structures} introduced by Grabowski and Rotkiewicz \cite{Grabowski:2009,Grabowski:2012}. Roughly speaking, a homogeneity structure on a manifold $F$ is a smooth action $h:\R\ti M\to M$  on $M$ of the monoid $(\R,\cdot)$ of multiplicative reals: $h_t\circ h_s=h_{ts}$.  Contrary to actions of the additive group $(\R,+)$ of reals, a homogeneity structure is  very  rigid. A fundamental result of \cite{Grabowski:2012} says that there are coordinate systems $(x^i)$ on $F$ such that $h_t(x^i)=(t^{w_i}x^i)$, where $w_i\ge 0$ are called the  \emph{degree} (or \emph{weight}) of the coordinate $x^i$ and $x^i$ takes  values in the whole $\R$ if only $w_i>0$. The highest $w_i$ is called the \emph{degree of the homogeneous structure}. It is clear that $F$ is a fibration over the manifold $M=h_0(F)$ (as local coordinates there can serve those $x^i$ which have degree $0$) with the typical fiber $\R^d$.  Of course, the transition maps respect the fibration structure and the degrees of coordinates. Such structures were called \emph{graded bundles} in \cite{Grabowski:2012} and the main result of \cite{Grabowski:2012} simply says that the categories of homogeneous structures and graded bundles are isomorphic.
Natural examples of graded bundles are e.g. higher tangent bundles $\sT^kM$. They can be used in a geometric interpretation of Lagrangian systems with higher order Lagrangians.

According to the description of vector bundles in \cite{Grabowski:2009}, graded bundles (homogeneous structures) of  degree one are simply vector bundles over $M$, so the concept of a graded bundle is a  natural generalization of a vector bundle. For instance, this allows a simple definition of \emph{double vector bundles} as two commuting homogeneity structures of degree one. In this language, vector bundle morphisms are just smooth maps between vector bundles  that intertwine the corresponding actions of $\R$, and vector subbundles of a vector bundle $E$ are just submanifolds which are invariant with respect to the scalar multiplication. This is much simpler than the standard concepts, as we can completely forget the addition in vector bundles. The concept of a double vector bundle can be generalized to a concept of \emph{double graded bundles} (and even \emph{$n$-tuple graded bundles}) in an obvious way. Double graded bundles in which one homogeneity structure is of  degree one (a vector bundle) we call \emph{graded-linear bundles},
$\GrL$-bundles in short. They are in a sense $\VB$-graded bundles. An important fact is that homogeneity structures on $F$ can be lifted to $\sT F$ and $\sT^*F$ making them into $\GrL$-bundles.

We extend the concept of a graded bundle to the concept of a \emph{$\Z$-graded bundle} allowing in the definition of the graded bundle for weights of negative degrees.  A $\Z$-graded bundle induces an analog of a homogeneity structure, the so called \emph{$\Z$-homogeneity structure}. We can further define \emph{double $\Z$-graded bundles} and \emph{$\Z$-graded-linear bundles} ($\ZGrL$-bundles). Moreover, a $\Z$-graded bundle structure on $M$ induces canonical $\ZGrL$-structures on $\sT M$ and $\sT^*M$.

We  also prove that a $\ZGrL$-structure on a manifold $F$ induces canonically a $\ZGrL$-bundle structure on $F^*$ (of course, the duality is with respect to the vector bundle structure) and that this is a true duality, $(F^*)^*=F$. We define the tensor product of $\Z$-graded bundles and the degree of their sections. In the case of homogeneous tensor fields on a $\Z$-graded bundle, the degree of tensors coincides with their degrees as sections of the corresponding tensor bundles.

 In \cite{Bursztyn:2016} it was discovered that the use of vector bundle characterization in terms of homogeneity structures of  degree one \cite{Grabowski:2009} substantially simplifies the definition of $\VB$-algebroids and $\VB$-groupoids. The new definition says that a $\VB$-algebroid ($\VB$-groupoid) is a Lie algebroid (Lie groupoid) $F$ equipped additionally with a vector bundle structure (i.a. a homogeneity structure of degree $1$) such that the maps $h_t:F\to F$ are Lie algebroid (Lie groupoid) morphisms for all $t\in\R$.

In \cite{Bruce:2016} we introduced an obvious generalization of the  above concepts of $\VB$-algebroids and $\VB$-groupoids,
by skipping the assumption that the homogeneity structure $h$ is of  degree one. The generalized objects were called \emph{weighted algebroids} and \emph{weighted groupoids}. Natural examples are higher tangent bundles $\sT^kE$ and $\sT^kG$ of  Lie algebroid $E$ and Lie groupoid $G$, respectively. The word `weighted' was chosen because  graded Lie algebroids have already a different meaning in the literature.

 In this paper, we introduce and study further concepts of weighted structures on a graded bundle $F$, such as
\begin{itemize}
\item weighted tensor fields and distributions;
\item weighted Nijenhuis structures, weighted (almost) complex structures, weighted product and tangent manifolds;
\item weighted foliations and fibrations;
\item weighted Ehresmann connections;
\item weighted Poisson, symplectic and pseudo-Riemannian structures;
\item weighted contact structures;
\item weighted Poisson-Nijenhuis structures;
\item weighted principal bundles.
\end{itemize}
The \emph{weighted structures} are understood as geometric structures compatible with the homogeneity structure on $F$;  what compatibility means is precisely explained in each case. If a given geometric structure is compatible with a vector bundle structure (homogeneity structure of degree one), then we speak about \emph{$\VB$-structures}.
For most of the weighted  structures, we make `intelligent guesses'  what compatibility means. It depends on considering  canonical lifts of the structures to the higher tangent bundles $\sT^rM$ (which are canonically graded bundles) as `compatible' with the graded bundle structure. In particular, we compute the degrees of  the lifted tensors and we show that the higher lifts of vector-valued differential forms respect the \emph{Fr\"olicher-Nijenhuis} and \emph{Nijenhuis-Richardson} brackets. This immediately implies that the higher tangent lifts of Nijenhuis tensors are Nijenhuis tensors, higher tangent lifts of complex structures are complex structures, etc.

The paper is organized as follows. First, we introduce the concept of $\Z$-graded bundles and the corresponding $\Z$-homogeneity structures, generalizing the concepts of graded bundles (i.e. $\N$-graded bundles) and homogeneity structures as  they appeared in \cite{Grabowski:2009,Grabowski:2012}. We show that the concept of homogeneity is much weaker in the $\Z$-graded case, allowing for functions of arbitrary real degree. Then, we present the concepts of double graded bundles, graded-linear ($\GrL$) bundles, i.e. $\VB$-graded bundles, tensor products of graded bundles and tangent and phase lifts of homogeneity structures (see \cite{Bruce:2015,Bruce:2015a,Bruce:2016,Bruce:2017a,Grabowski:2009, Grabowski:2012}), etc., extending all these notions to $\Z$-graded case.

Further, we study the duality for $\ZGrL$-bundles and we describe the degree of their sections. In Section 4, we discuss higher tangent lifts of geometric structures as they are presented in \cite{Wamba:2011} and \cite{Morimoto:1970c},  they are used as motivating examples to define weighted structures. In particular, we study higher tangent lifts of vector-valued differential forms with respect the  Fr\"olicher-Nijenhuis and Nijenhuis-Richardson brackets.

Finally, in Section 5, we introduce and study various weighted structures  and discuss also some natural examples.

\section{Graded bundles and homogeneity structures}
\subsection{$\Z$-graded bundles}
According to  textbooks, a \textit{vector bundle} is a locally trivial fibration $\zt:E\to M$ which, locally over some open subsets $U\subset M$, reads $\zt^{-1}(U)\simeq U\ti\R^n$ and admits an atlas in which local trivializations transform linearly in fibers:
\begin{equation}\label{bu:1} U\cap V\ti\R^n\ni(x,y)\longmapsto(\zf(x),A(x)y)\in U\cap V\ti\R^n\,,\quad
A(x)\in\GL(n,\R).\end{equation}
This can be expressed also in terms of a gradation in which base coordinates (pull-backs of coordinates in $M$) $x=(x^i)$ have degree $0$, and linear coordinates $(y)$ have  degree one. Such coordinates on a vector bundle we will call \emph{affine}. Linearity in $y's$ of the transformation rules is now equivalent to the fact that changes of coordinates respect the degrees.  A morphism in the category of vector bundles is represented by the following commutative diagram of smooth maps
$$ \xymatrix{
E_1\ar[rr]^{\Phi} \ar[d]^{\zt_1} && E_2\ar[d]^{{\zt_2}} \\
M_1\ar[rr]^{{\varphi}} && M_2 }
$$
being linear (homogeneous) in fibres, i.e. preserving the degrees.

A straightforward generalization of the above concept is the following (cf. \cite{Bruce:2017a,Grabowski:2012}).
Consider a graded vector space $\R^\bd=\R^{d_1}\oplus\cdots\oplus\R^{d_k}$, where $\bd=(d_1,\dots,d_k)$, with positive integers $d_i$, and equipped with a vector field $\n$ of the form
\be\label{nb}
\n=\sum_{i=1}^kw_i\,\sum_{a=1}^{d_i}y_i^a\pa_{y_i^a}\,.
\ee
Here, $y_i=(y_i^1,\dots,y_i^{d_i})$ are canonical coordinates in $\R^{d_i}$ and $w_i$ are non-zero integers, $i=1,\dots,k$. Fixing such a vector field is equivalent to fixing $\wgh=(w_1,\dots,w_k)$
understood as the vector of \emph{degrees} (\emph{weights}) of the subspaces $\R^{d_1},\dots,\R^{d_k}$, making $\R^\bd$ a $\Z$-graded vector space which we denote $\R^\bd(\wgh)$. The vector field $\n$ will be called the \emph{weight vector field}. It induces
the notion of a \emph{homogeneity} for smooth functions on $\R^\bd$.
\begin{definition}
A smooth function $f$ on $\R^\bd$ is called \emph{homogeneous of degree (weight) $w\in\R$} if
\be\label{hm}\n(f)=w\,f\,.\ee
\end{definition}
\noindent By the \emph{degree} of the weight vector field we will understand $\wu(\n)=\max{|w_i|}$. One can easily check that the coordinate $y_i^a$ is homogeneous of weight $w_i$. It is also easy to see that (\ref{hm}) is equivalent to
$$ f\circ h_t=t^w\cdot f\,,\ t>0\,,$$
where
\be\label{hm1}
h_t(y)=\left(t^{w_1}\,y_1,\dots,t^{w_k}\,y_k\right)\,,\ t>0\,,
\ee
is the smooth action of the multiplicative group $\R^\ti=\R\setminus\{ 0\}$ of non-zero reals,
induced by the weight vector field $\n$.

\medskip
Let us fix now $\bd=(d_1,\dots,d_k)$, $\wgh=(w_1,\dots,w_k)$, and consider a fiber bundle $\zt:F\to M$ with the typical fiber $\R^\bd(\wgh)$ and a local trivializations
$$\zf_\za:\zt^{-1}(U_\za)\to U_\za\ti\R^\bd(\wgh)\,,$$
where $\{ U_\za\}$ is an open covering of $M$ with coordinate charts $(U_\za,x_\za)$. On each $U_\za\ti\R^\bd(\wgh)$ with coordinates $(x_\za,y_\za)$ we have a canonical vector field $\n_\za$ which formally reads as (\ref{nb}):
$$
\n_\za=\sum_{i=1}^kw_i\,\sum_{a=1}^{d_i}(y_\za)_i^a\pa_{(y_\za)_i^a}
$$
(it is therefore vertical). It defines the notion of homogeneity of a smooth function for which coordinates $x_\za^A$ on $M$ are of degree 0 and coordinates $(y_\za)_i^a$ have weights as $(y_i^a)$ in $\R^\bd(\wgh)$.

Finally, let us assume that the fiber bundle transition maps,
\bea\nn&\zF_{\za\zb}=\zf_\za\circ\zf^{-1}_\zb:(U_\za\cap U_\zb)\ti\R^\bd(\wgh)\lra (U_\za\cap U_\zb)\ti\R^\bd(\wgh)\,,\\
\label{tm}&\zF_{\za\zb}(x_\zb,y_\zb)=(\zf_{\za\zb}^1(x_\za),\zf_{\za\zb}^2(x_\zb,y_\zb))\,,
\eea
preserve the weights of coordinates, thus the weights of all homogeneous functions, i.e. transfer $\n_\zb$ into $\n_\za$. This is equivalent to the fact that
$\zF_{\za\zb}$ respect the corresponding actions of the multiplicative group of positive reals,
$$\zF_{\za\zb}\circ h_t^\zb=h_t^\za\circ\zF_{\za\zb}\,,\ t>0\,,$$
where
\be\label{e9}h_t^\za(x_\za,y_\za)=(x_\za,t^{w_i}\,(y_\za)_i)\,.\ee
Note that $h_t\circ\zt=\zt\circ h_t=\zt$. Respecting local weight vector fields by the transition maps implies that the family $\{\n_\za\}$ gives rise to a globally defined weight vector field $\n_F$ (or globally defined action $h^F_t$ of the multiplicative group of positive reals). Note that $M$ is canonically a submanifold of $F$.
Indeed, locally we can view $U_\za$ as embedded in $F$ as $\zf_\za^{-1}(U_\za\ti\{ 0\})$. But transition maps respect the local weight vector fields whose zeros form are $U_\za\ti\{ 0\}$, so that these embedding of $U_\za$ into $F$ give rise to an embedding of $M$. This is not a standard property of fiber bundles.

Any local trivialization of a fiber bundle $\zt:F\to M$, with the typical fiber $\R^n$, of the form
$U\ti\R^n$, where $U$ is an open subset of $\R^N$ and $U\ti\R^n$ (with canonical coordinates $(x^i, y^a)$) which is equipped with a weight vector field
\be\label{lwvf}\n=\sum_aw_a\,y^a\pa_{y^a}\,,\ w_a\in\Z^\ti=\Z\setminus\{ 0\}\,,\ee
we will call a \emph{$\Z$-chart}. The above construction shows how a proper gluing  of $\Z$-charts, i.e. a gluing respecting the local weight vector fields, leads to a global geometric object.
\begin{definition}
A fiber bundle $\zt:F\to M$ with the typical fiber $\R^n$ and an atlas of local trivializations with $\Z$-charts, whose gluing by transition maps respects the local weight vector fields (\ref{lwvf}), is called a \emph{$\Z$-graded bundle}. The \emph{degree} of a $\Z$-graded bundle is the degree of local weight vector fields (all are the same).
\end{definition}
\begin{remark} As we have an atlas for $F$ consisting of $\Z$-charts,  we will work only with local coordinates which have only integer weights. This is important, as on a $\Z$-chart smooth functions with arbitrary real weight could exist (see Example \ref{ex0}).
 Note also that in the case when all weights $w_a$ are positive, we recover the concept of a \emph{graded bundle} (we can call it here \emph{$\N$-graded bundles}) introduced in \cite{Bruce:2017a}.
\end{remark}
\noindent From our previous considerations we get the following.
\begin{proposition}
Every $\Z$-graded bundle $\zt:F\to M$ is canonically equipped with a globally defined \emph{weight vector field} $\n_F$ which locally, in $\Z$-charts, looks like (\ref{lwvf}).
The weight vector field induces also a smooth action $h^F_t$, $t\ne 0$, of the multiplicative group $\R^\ti$ of multiplicative reals, which in $\Z$-chart coordinates $(x^i,y^a)$ reads
\be\label{haction}
h^F_t(x^i,y^a)=(x^i,t^{w_a}\,y^a)\,.
\ee
If we use the convention that $0^w=0$ for $w\ne 0$, the above formula  defines actually an action
$h$ of the multiplicative monoid $(\R,\cdot)$ of reals:
\be\label{ma} h^F:\R\ti F\to F\,,\ h^F(t,p)=h^F_t(p)\,,\ h^F_t\circ h^F_s=h^F_s\circ h^F_t\,.\ee
This monoid action is smooth if and only if all weights $w_a$ are positive.
Moreover, the manifold $M$ can be viewed as a submanifold in $F$ by a canonical identification of $M$ with $h_0(F)$.
\end{proposition}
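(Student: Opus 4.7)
The plan is to verify each assertion by reducing to the local picture in a $\Z$-chart and using the single structural condition built into the definition of a $\Z$-graded bundle: every transition map $\zF_{\za\zb}$ intertwines the local weight vector fields $\n_\zb$ and $\n_\za$ (equivalently, the local actions $h^\zb_t$ and $h^\za_t$).

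First I would assemble $\n_F$ and the action $h^F$ on $\R^\ti$ simultaneously. Each $\Z$-chart carries a vertical vector field of the form (\ref{lwvf}) and, for every $t\ne 0$, a well-defined local diffeomorphism $h^\za_t(x,y)=(x,t^{w_a}y^a)$ as in (\ref{haction}). By the intertwining condition, both families agree on overlaps, so the $\n_\za$ glue to a global smooth vector field $\n_F$ and the $h^\za_t$ glue to a smooth map $\R^\ti\ti F\to F$. The group-action and commutativity identities $h^F_t\circ h^F_s=h^F_s\circ h^F_t=h^F_{ts}$ reduce coordinate-wise to the elementary $(ts)^{w_a}=t^{w_a}s^{w_a}$, and differentiating at $t=1$ recovers $\n_F$.

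Next I would extend to the monoid $(\R,\cdot)$. With the convention $0^w=0$ for $w\ne 0$, formula (\ref{haction}) still makes sense at $t=0$, the same identity persists, and $h^F_0$ is the fibrewise projection onto the zero locus of $\n_F$. For smoothness on $\R\ti F$ the only issue is at $t=0$. When every $w_a>0$, each coordinate function $(t,y^a)\mapsto t^{w_a}y^a$ is polynomial of degree $w_a$ in $t$, hence smooth on $\R^2$, and smoothness of $h^F$ follows by working in any $\Z$-chart. Conversely, if some $w_a<0$, the same function fails even to stay bounded as $t\to 0$ along any line $y^a=c\ne 0$, so no smooth extension is possible. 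Because weights are coordinate invariants (preserved by transition maps), this dichotomy is chart-independent. Finally, for $M\hookrightarrow F$: in a $\Z$-chart $h^F_0$ lands in $U_\za\ti\{0\}$, and transition maps take $\{y_\zb=0\}$ to $\{y_\za=0\}$ because all weights are nonzero, so the local zero sections glue into a closed embedded submanifold, which is precisely $h_0(F)$ and is canonically diffeomorphic to $M$ via the local formula $x\mapsto(x,0)$.

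The main obstacle will be making the "only if" half of the smoothness dichotomy completely clean: one must pinpoint negative weights as the sole local obstruction at $t=0$ and argue that, despite the set-theoretic convention $0^w=0$, no smooth extension of $(t,y)\mapsto t^{w}y$ across $t=0$ exists once $w<0$, and that this failure survives any change of $\Z$-chart. Everything else is direct bookkeeping of the local data against the weight-preserving gluing rule.
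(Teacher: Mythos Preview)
Your proposal is correct and follows essentially the same approach as the paper. In fact, the paper does not give a separate proof of this proposition at all: it is introduced with ``From our previous considerations we get the following,'' meaning the assertions are taken as immediate consequences of the discussion preceding Definition~2.2 (gluing of the local $\n_\za$ and $h^\za_t$ via weight-preserving transition maps, and the observation that the zero loci $U_\za\times\{0\}$ glue to an embedding of $M$). Your write-up supplies exactly these details, and even adds the explicit ``only if'' argument for the smoothness dichotomy at $t=0$, which the paper leaves implicit.
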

\begin{definition} We call a (local) function $f$ on $F$ \emph{homogeneous of weight $w\in\R$} if $\n_F(f)=wf$
or, equivalently,
$$f\circ h_t=t^wf\,,\ t>0\,.$$
\end{definition}
\begin{definition}
By \emph{$\Z$-homogeneity structure} we understand an action $h:\R\ti F\to F\,,\ h(t,p)=h_t(p)$ of the multiplicative monoid $(\R,\cdot)$ of reals on a fiber bundle $\zt:F\to M$ with the typical fiber $\R^n$ for which there is a covering of $F$ by local trivialization charts $\zt^{-1}(U)\simeq U\ti\R^n$ in which $h_t$ read as in (\ref{haction}).
\end{definition}
It immediately follows that $M\simeq h_0(F)$. Of course, the action $h^F$ of the multiplicative monoid $(\R,\cdot)$ we defined above for a $\Z$-graded bundle is a $\Z$-homogeneity structure which we call \emph{associated with the $\Z$-graded bundle $\zt:F\to M$}.

\medskip\noindent There are clear notions of morphisms of $\Z$-graded bundles and $\Z$-homogeneity structures.
\begin{definition} A \emph{morphism of $\Z$-graded bundles} $\zt_i:F_i\to M_i$, $i=1,2$, is a morphism $\zf:F_1\to F_2$ of the corresponding fiber bundles preserving homogeneity, i.e. such that the pull-backs of homogeneous functions on $F_2$ of weight $w$ are homogeneous functions on $F_1$ of weight $w$ (or equivalently, the vector fields $\n_{F_1}$ and $\n_{F_2}$ are $\zf$-related).

A \emph{morphism of $\Z$-homogeneity structures} $h^i$ on fiber bundles $F_i$, $i=1,2$ are smooth maps $\zf:F_1\to F_2$ intertwining $h^{i}$, $i=1,2$, i.e.
\be\label{mor}
\zf\circ h^{F_1}_t=h^{F_2}_t\circ\zf
\ee
for all $t\in\R$.
\end{definition}
It is easy to see that we obtain in this way the categories of $\Z$-graded bundles and $\Z$-homogeneity structures.
In fact, the following is nearly obvious.
\begin{proposition}\label{Zequi}
The categories of $\Z$-graded bundles and $\Z$-homogeneity structures are canonically equivalent.
\end{proposition}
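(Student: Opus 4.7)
The plan is to exhibit a pair of mutually quasi-inverse functors. The forward functor $F\mapsto h^F$ has essentially been built in the discussion preceding the proposition: to every $\Z$-graded bundle $\tau:F\to M$ it assigns the monoid action $h^F$ of $(\R,\cdot)$ defined by (\ref{ma}), which in any $\Z$-chart takes the canonical form (\ref{haction}), thereby verifying the defining condition of a $\Z$-homogeneity structure. On morphisms, a smooth map $\phi:F_1\to F_2$ preserving homogeneous functions of every weight intertwines the weight vector fields $\nabla_{F_1}$ and $\nabla_{F_2}$; integrating along the reparametrised flow $t=e^s$ (valid for $t>0$) this intertwines $h^{F_1}_t$ and $h^{F_2}_t$ for all positive $t$, while continuity together with the set-theoretic convention $0^w=0$ handles the remaining values of $t$.

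For the inverse functor I would start from a $\Z$-homogeneity structure $h$ on a fibre bundle $\tau:F\to M$. By the very definition, $F$ admits a covering by trivialisations $\tau^{-1}(U_\alpha)\simeq U_\alpha\times\R^n$ in which $h_t$ reads as in (\ref{haction}); these are precisely $\Z$-charts. For any two such charts the transition map $\Phi_{\alpha\beta}$ is a smooth diffeomorphism satisfying $\Phi_{\alpha\beta}\circ h^\beta_t = h^\alpha_t\circ \Phi_{\alpha\beta}$ for every $t\in\R^\times$; differentiating this identity at $t=1$, where smoothness is available, yields that $\Phi_{\alpha\beta}$ sends the local weight vector field (\ref{lwvf}) of one chart to that of the other. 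Hence the given atlas makes $F$ into a $\Z$-graded bundle, and in $\Z$-chart coordinates this assignment is manifestly inverse to the forward one on objects. The morphism side works analogously: a smooth map intertwining the two monoid actions differentiates at $t=1$ to a map relating the weight vector fields, while the relation at $t=0$ forces $\phi$ to send $M_1\simeq h^{F_1}_0(F_1)$ into $M_2$, so $\phi$ covers a smooth map of the bases and is a morphism of the underlying fibre bundles.

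The main obstacle, and the reason some care is required, is that in the presence of negative weights the monoid action is not smooth at $t=0$, so one cannot differentiate there; the argument must therefore extract the infinitesimal datum (the weight vector field) from the smooth restriction to $t>0$ and separately invoke the monoid identity $h_0\circ h_0=h_0$ to identify the base $M\hookrightarrow F$ as the image of $h_0$. Once this bookkeeping is in place, everything reduces to a direct computation on a single $\Z$-chart showing that smooth $(\R^\times,\cdot)$-actions of the prescribed normal form are in bijection with weight vector fields of the form (\ref{lwvf}) with integer weights. Naturality of the two functors in the categorical sense is then immediate from the coordinate description, which gives the claimed equivalence of categories.
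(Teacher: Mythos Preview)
Your proposal is correct in spirit and in fact supplies far more detail than the paper does: the paper offers no proof whatsoever, simply declaring the result ``nearly obvious'' on the grounds that the two definitions encode the same data (an atlas of $\Z$-charts with weight-preserving transitions versus a monoid action that, in suitable trivialisations, takes the normal form~(\ref{haction})). Your construction of mutually inverse functors is exactly the natural way to flesh this out, and your observation about non-smoothness at $t=0$ is a nuance the paper does not even mention.

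One small point: your claim that ``continuity together with the set-theoretic convention $0^w=0$ handles the remaining values of $t$'' does not quite dispose of $t<0$, since with negative weights present the action $t\mapsto h_t$ is not continuous at $t=0$ and so cannot be used to pass from $t>0$ to $t<0$. The cleanest fix is to note that $h_{-1}$ is, in every $\Z$-chart, the linear involution $y^a\mapsto(-1)^{w_a}y^a$; a fibre-bundle morphism whose components are homogeneous of the correct integer weights automatically intertwines this involution (this is where integrality of the weights enters, ruling out pathologies of the sort in Example~\ref{ex0}), and then $h_t=h_{-1}\circ h_{|t|}$ for $t<0$ completes the argument. The paper, having given no proof, does not address this either.
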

\begin{example}
Consider $\R^{(1,1)}$ with coordinates $(y,z)$, where $y$ is of degree 1 and $z$ is of degree 2.
The map
$$\phi:\R^{(1,1)}\to\R^{(1,1)}\,,\quad (y,z)\mapsto (y,z+y^2)$$
is a morphism of $\N$-graded bundles (over a point in this case) but is not linear, i.e. it does not preserve the structure of the graded vector space
$$\R^{(1,1)}=\R\oplus\R=\langle y\rangle\oplus\langle z\rangle\,.$$
This shows the difference between the categories of $\Z$-graded bundles and $\Z$-graded vector bundles.
\end{example}
\begin{remark}
To simplify the notation, on a $\Z$-graded bundle $F\to M$ of degree $k$ we will usually use a systems of homogeneous local coordinates $(x^i)$ assuming by default that the weight of variable $x^i$ is $w_i\in\Z$,\, $-k\le w_i\le k$. Such a system of local coordinates on $F$ induces canonically a system $(x^i)_{w_i=0}$ of local coordinates on $M$. This is a convenient notations, since the weight vector field $\n_F$ in such coordinates reads
\be\label{wvf}\nabla_F=\sum_iw_i\,x^i\pa_{x^i}\,.\ee
Note that the $\Z$-graded bundles are purely even graded manifolds in the sense of Voronov \cite{Voronov:2002}.
\end{remark}
\begin{example}
The tangent bundle $F=\sT M$ of a manifold $M$ is a vector bundle which is a $\Z$-graded bundle with homogeneous adapted (from $M$) standard local coordinates $(x^i,\dot x^j)$, where $x^i$ are of degree $0$ and $\dot x^j$ are of degree $1$.
The cotangent bundle $\sT^*M$ is again a vector bundle with the dual coordinates $(x^i,p_j)$, but is convenient to take the degree $0$ for $x^i$ and degree $-1$ for $p_j$, that turns $\sT^*M$ into a $\Z$-graded bundle. In this case the pairing between $\sT M$ and $\sT^*M$ is of degree 0, which supports the standard convention $\la\pa_{x^i},\xd x^i\ran=1$, independents of the degree of $x^i$.
\end{example}

\begin{remark}
There is much deeper result \cite{Grabowski:2012} for $\N$-graded bundles than Proposition \ref{Zequi}. The homogeneity structures associated with $\N$-graded bundles are smooth actions of the monoid $(\R,\cdot)$ on $F$. The main result in
\cite{Grabowski:2012} states that any such a smooth action on a manifold $F$ is associated with a $\N$-graded bundle on $h_0:F\to M=h_0(F)$. We do not need any additional assumptions, e.g. that the manifold $F$ is a fiber bundle, etc.
We discuss these questions in the next subsection.
\end{remark}
\begin{example}\label{ex0}
It is interesting that if the degrees of coordinates have  different signs, then there exist local smooth functions on $F$ which are homogeneous of arbitrary degree $a\in\R$ and which, even for an integer degree $a$, are not polynomials in variables $x^i$, $w_i\ne 0$.

Take for example $\R^2$ with coordinates $(x,y)$, where $x$ is of degree $1$ and $y$ is of degree $-1$, $h_t(x,y)=(tx,t^{-1}y)$. Take a nonzero function $\zf:\R\to\R$, $\zf(0)=0$, which is flat at $0$ (all derivatives at $0$ vanish) but not constant. Then, $f(x,y)=\zf(xy)$ is of degree $0$ with respect to $h_t$ but is not constant. The function $f_1(x,y)=x\cdot\zf(xy)$ is of degree $1$ but is not a polynomial in coordinates while the function
$$\begin{cases}f_d(x,y)=|x|^d\zf(xy)\quad\text{for}\quad x\ne 0\\
f_d(x,y)=0 \qquad\text{for}\quad x=0
\end{cases}$$
is clearly smooth ($\zf$ is flat at $0$) and of degree $d\in\R$.
\end{example}
 As shown in the next theorem, such strange homogeneous functions must be flat at $0$ on fibers of $F$.
\begin{theorem}
If a  smooth function $f:\R^n\to\R$ on a $\Z$-graded bundle $\R^n$ with non-zero weights is homogeneous of degree $w\in\R$ and not flat at $0$, then $w\in\Z$.
\end{theorem}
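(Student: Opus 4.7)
The plan is to differentiate the infinitesimal homogeneity equation $\nabla_F(f)=wf$ at the origin and read off the degree from a non‑vanishing Taylor coefficient. Concretely, working in $\Z$‑chart coordinates $(x^1,\dots,x^n)$ with integer weights $w_a\in\Z^{\times}$, the weight vector field is $\nabla_F=\sum_a w_a x^a\pa_{x^a}$, which vanishes at the origin $0\in\R^n$. Smoothness of $f$ together with homogeneity of degree $w$ gives the pointwise identity $\nabla_F(f)=wf$ on all of $\R^n$, and non‑flatness at $0$ means $\pa^J f(0)\neq 0$ for at least one multi-index $J=(j_1,\dots,j_n)\in\N_0^n$.

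The key computation I would perform is the commutator $[\pa_{x^b},\nabla_F]=w_b\,\pa_{x^b}$, which follows immediately from the product rule applied to $\nabla_F=\sum_a w_a x^a\pa_{x^a}$. Iterating this, one obtains
\begin{equation*}
[\pa^J,\nabla_F]=\Bigl(\sum_{a=1}^{n} w_a j_a\Bigr)\pa^J,
\end{equation*}
so that for every multi-index $J$,
\begin{equation*}
\pa^J(\nabla_F f)=\nabla_F(\pa^J f)+\Bigl(\sum_{a}w_a j_a\Bigr)\pa^J f.
\end{equation*}
I would then evaluate this identity at the origin. Since $\nabla_F$ vanishes at $0$, the term $\nabla_F(\pa^J f)(0)$ drops out, and using $\nabla_F f=wf$ on the left hand side I get
\begin{equation*}
w\,\pa^J f(0)=\Bigl(\sum_{a}w_a j_a\Bigr)\pa^J f(0).
\end{equation*}

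To conclude, I pick a multi-index $J$ witnessing the non-flatness, i.e.\ with $\pa^J f(0)\neq 0$. Cancelling this factor yields $w=\sum_a w_a j_a$, which is a $\Z$-linear combination of the integer weights $w_a$ with non-negative integer coefficients $j_a$, and therefore lies in $\Z$. I do not foresee a real obstacle here: the only subtle point is justifying that the smooth equation $\nabla_F f=wf$ can be differentiated termwise (which is automatic for smooth $f$), and noting that this argument does not require the Taylor series of $f$ to converge to $f$—only that some derivative at $0$ is non‑zero, which is precisely what non‑flatness supplies. Example \ref{ex0} shows the hypothesis of non‑flatness cannot be dropped, since the flat function $\varphi(xy)$ can be multiplied by $|x|^d$ to realise any real degree $d$.
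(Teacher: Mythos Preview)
Your proof is correct and rests on the same underlying idea as the paper's: a non-vanishing Taylor coefficient of $f$ at $0$ forces $w$ to equal the integer weight $\sum_a w_a j_a$ of the corresponding monomial. The difference is purely in execution. The paper works with the finite homogeneity relation $f\circ h_t=t^w f$, splits $f=P+o$ into a Taylor polynomial plus remainder, and argues that for fixed $t>0$ both $o\circ h_t$ and $t^w o$ are $o(|x|^r)$, so by uniqueness of the Taylor polynomial $P\circ h_t=t^w P$; since $P$ is a sum of monomials, each of integer weight, one concludes $w\in\Z$. You instead differentiate the infinitesimal relation $\nabla_F f=wf$ directly, using the commutator identity $[\pa^J,\nabla_F]=(\sum_a w_a j_a)\pa^J$ and the fact that $\nabla_F$ vanishes at $0$. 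Your route is slightly more economical: it bypasses the remainder estimate entirely and needs no appeal to uniqueness of Taylor polynomials, only smoothness of $f$ and the purely algebraic commutator computation.
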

\begin{proof}
Take $\R^n$ with canonical coordinates $(x^i)$ such that $h_t(x^i)=(t^{w_i}x^i)$, where $w_i\in\Z^\ti$. Suppose $f$ is a smooth function on $\R^n$ such that $f\circ h_t=t^wf$ for $t>0$. As $f$ is not flat at $0$ there is a Taylor decomposition $f(x)=P(x)+o(x)$ of $f$ around $0$ with $P$ being a non-zero polynomial of degree $\le r$ and $\lim_{x\to 0}(o(x)/|x|^r)=0$. We have $P\circ h_t +o\circ h_t=t^w(P+o)$ for $t>0$. Since for fixed $t>0$, the functions $o\circ h_t$ and $t^wo$ are also appropriately small near $0$ and the Taylor polynomial of a given rank is uniquely determined,
we have $P\circ h_t=t^wP$ for $t>0$. Because any polynomial in variables $x^i$ is of integer homogeneous degree, we have $w\in\Z$.

\end{proof}

We can easily extend the concept of homogeneity from functions to arbitrary tensor fields on the $\Z$-graded bundle $F$: a tensor $K$ is homogeneous of degree $w$ with respect to $h$ if $\Ll_{\nabla_F}(K)=w\cdot K$.
For instance, a vector field $Y$ is of degree $w$ if and only if
\be\label{e4}\Exp(t\nabla_F)_*(Y)=(h_{\exp(t)})_*(Y)=e^{-tw}\,Y\,,\ee
where $\Exp(t\nabla_F)$ is the flow induced by $\n_F$ (any weight vector field is complete).
This is because in general
$$\left.\frac{\rmd}{\rmd t}\right|_{t=0}\Exp(tX)_*(Y)=[Y,X]\,.$$
Note that (\ref{e4}) is equivalent to
$$ (h_t)_*(Y)=t^{-w}\cdot Y$$
for $t> 0$.

\noindent
Similarly, a differential form $\zw$ is of degree $w$ if and only if
$$(h_t)^*(\zw)=t^{w}\zw$$
for $t>0$.

\begin{example} If $(x^i)$ are homogeneous coordinates, then the vector field $\pa_{x^j}$ is of degree $-w_j$.
Indeed,
$$[\nabla_F,\pa_{x^j}]=[\sum_iw_i\,x^i\pa_{x^i},\pa_{x^j}]=-w_j\pa_{x^j}\,.$$
Similarly, the one-form $\xd x^j$ is of degree $w_j$:
$$\Ll_{\nabla_F}(\xd x^j)=\xd(i_{\nabla_F}\xd x^j)=\xd(w_jx^j)=w_j\xd x^j\,.$$
\end{example}

\subsection{Graded bundles}
Graded bundles form  a particular and very important class of $\Z$-graded bundles with many nice properties, which will be the main geometric structure of the paper.
\begin{definition} If all weights $w_i$ of coordinates in a $\Z$-graded bundle $F$ are non-negative, we speak just about a \emph{$\N$-graded bundle} or simply a \emph{graded bundle} (see \cite{Bruce:2017a,Grabowski:2012}).
\end{definition}
\noindent In this case the corresponding action $h=h^F:\R\ti F\to F$ of the monoid $(\R,\cdot)$ is smooth. Of course, graded bundles of degree 1 are exactly vector bundles.
\begin{theorem}[Grabowski-Rotkiewicz \cite{Grabowski:2012}] Homogeneous functions on graded bundles $\zt:F\to M$ are locally polynomials in homogeneous coordinates of non-zero degree with basic functions as coefficients . In consequence, the transformations of fiber coordinates $A(x,y)$ in (\ref{bu:1}) must be polynomial  in the homogeneous fiber coordinates $y_j$'s, i.e. any graded bundle is a \textit{polynomial bundle}.
\end{theorem}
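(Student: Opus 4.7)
The plan is to reduce the statement to a local question in a graded chart and then use Taylor expansion in the fiber coordinates combined with the action of $h_t$ as $t\to 0^+$. The crucial point, not available in the $\Z$-graded case, is that in an $\N$-graded bundle the monoid action $h:\R\ti F\to F$ extends \emph{smoothly} through $t=0$, so we have genuine continuity as the fibers are contracted onto the base.

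First I would fix a graded chart with homogeneous coordinates $(x^i,y^a)$, where the $x^i$ have weight $0$ (giving local coordinates on $M$) and the $y^a$ have weights $w_a\ge 1$. Let $f$ be smooth and homogeneous of weight $w$, i.e.\ $f(x,h_t y)=t^w f(x,y)$ for $t>0$. I would then expand $f$ in $y$ around $y=0$ up to some large order $N$:
\[
f(x,y)=\sum_{|I|\le N}\frac{1}{I!}c_I(x)\,y^I+R_N(x,y),\qquad R_N(x,y)=o(|y|^N),
\]
where $c_I(x)=\partial^I_y f(x,0)$ are smooth basic functions. Applying $h_t$ gives $y^I\mapsto t^{|I|_w}y^I$ with $|I|_w=\sum_{a}I_a w_a$, so comparing both sides of the homogeneity equation and using uniqueness of the Taylor coefficients yields $c_I(x)\bigl(t^{|I|_w}-t^w\bigr)=0$ for all $t>0$. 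Hence $c_I\equiv 0$ unless $|I|_w=w$. Because every fiber weight satisfies $w_a\ge 1$, the condition $|I|_w=w$ forces $|I|\le w$, so only finitely many multi-indices contribute and the polynomial part $P(x,y)=\sum_{|I|_w=w}\tfrac{1}{I!}c_I(x)y^I$ is a finite sum.

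The main work is eliminating the remainder. Choosing $N>w$, one has $R_N(x,y)=f(x,y)-P(x,y)$, and $R_N$ itself is homogeneous of weight $w$, so $R_N(x,h_t y)=t^w R_N(x,y)$ for all $t>0$. On the other hand, as $t\to 0^+$ the point $h_t y$ converges to $0$ in the fiber (here we use essentially that all $w_a>0$), and by Taylor's theorem $R_N(x,h_t y)=o(|h_t y|^N)=o\bigl(t^{N\,\min_a w_a}\bigr)$. Since $N\min_a w_a\ge N>w$, dividing by $t^w$ and letting $t\to 0^+$ forces $R_N(x,y)=0$. Thus $f=P$ is a polynomial in the fiber coordinates with basic coefficients. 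I anticipate this remainder-vanishing step to be the central obstacle, as it is where non-negativity of the weights enters decisively.

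Finally, the consequence on transition maps is immediate: for two overlapping trivializations with fiber coordinates $y_\alpha^a$ and $y_\beta^a$, each component of $\zF_{\za\zb}$ is the pull-back of the homogeneous function $y_\alpha^a$ of weight $w_a$, hence by the first part it is a polynomial in $y_\beta$ of total weight $w_a$ with coefficients depending smoothly on $x_\beta$. Therefore the matrix/coefficient function $A(x,y)$ in (\ref{bu:1}) is polynomial in the fibers, and the graded bundle is a polynomial bundle.
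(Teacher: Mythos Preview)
Your argument is correct. The Taylor expansion in the fiber variables, followed by the observation that homogeneity pins down the weighted degree of each surviving monomial, and the remainder-killing step via $t\to 0^+$, is exactly the standard route. The key point you identify --- that positivity of all fiber weights forces $h_t y\to 0$ so the remainder estimate $R_N(x,h_ty)=o(t^{N\min_a w_a})$ can be played against $t^w R_N(x,y)$ --- is precisely where $\N$-gradedness enters, and your use of it is sound. A minor remark: your argument also yields, as a by-product, that the only admissible homogeneity degrees are non-negative integers, since $|I|_w\in\N$ whenever all $w_a\in\N$; this recovers the companion statement quoted just below the theorem in the paper.

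As for comparison: the paper does not supply its own proof of this theorem. It is stated with attribution to \cite{Grabowski:2012} and used as input. Your proof is essentially the argument one finds in that reference (and is close in spirit to the short proof the present paper does give for the weaker $\Z$-graded statement that non-flat homogeneous functions must have integer degree: Taylor expand, match terms, and use uniqueness of Taylor polynomials). So there is nothing to contrast --- you have reconstructed the expected proof.
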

\noindent Note that the above theorem is not valid in the case of general $\Z$-graded bundles.
Using now homogeneous fiber bundle coordinates $(x^i)$ on $F$ (they do not denote coordinates on $M$ any longer), we have $x^i\circ h_t=t^{w_i}x^i$ also for $t<0$. Moreover, homogeneous functions $f$ on $F$ may have only non-negative integer degrees $w$ \cite{Grabowski:2012} and $f\circ h_t=t^wf$ also for $t<0$.
The weight vector field $\nabla _F$ has formally the same form (\ref{wvf}), but all  $w_i$ are non-negative.

We define homogeneous tensors on graded bundles as in the case of $\Z$-graded bundles.
For instance, a vector field $Y$ is of degree $w$ if and only if
$$ (h_t)_*(Y)=t^{-w}\cdot Y$$
for all $t\ne 0$ and a differential form $\zw$ is of degree $w$ if and only if
$$(h_t)^*(\zw)=t^{w}\zw$$
for all $t\ne 0$.

\begin{example}\label{e3}(\cite{Grabowski:2012})
Consider the second-order tangent bundle $\sT^2M=\sJ^2_0(\R,M)$, i.e. the bundle of second jets of smooth maps $(\R,0)\to M$. Writing Taylor expansions of curves in local coordinates $(x^A)$ on $M$:
\begin{equation*}x^A(t)=x^A(0)+\dot x^A(0)t+\ddot x^A(0)\frac{t^2}{2}+o(t^2)\,,\end{equation*}
we get local coordinates $(x^A,\dot x^B,\ddot x^C)$ on $\sT^2M$, which transform as
\beas
x'^A&=&x'^A(x)\,,\\
\dot x'^A&=&\frac{\pa x'^A}{\pa x^B}(x)\,\dot x^B\,,\\
\ddot x'^A&=&\frac{\pa x'^A}{\pa x^B}(x)\,\ddot x^B+\frac{\pa^2 x'^A}{\pa x^B\pa x^C}(x)\,\dot x^B\dot x^C\,.
\eeas
This shows that associating with $(x^A,\dot x^B,\ddot x^C)$ the weights $0,1,2$, respectively, will give us a graded bundle structure of degree $2$ on $\sT^2M$. Note that, due to the quadratic terms above, this is not a vector bundle over $M$.
All this can be  generalized to higher tangent bundles $\sT^kM=\sJ^k_0(\R,M)$. The adapted coordinate systems are
$(x^A,x^B_i)$, $i=1,\dots,k$, where $x^A$ are of degree 0 and $x^B_i$ are of degree $i=1,\dots,k$.
\end{example}
\begin{remark}
Note that there is an alternative convention for canonical coordinates. It is used e.g.  in the paper \cite{Morimoto:1970c} by Morimoto which  will be our main reference in the next section.
We write a curve in coordinates on $M$ as
$$x^A(t)=x^A_{0}(0)+t\,x^A_{1}(0)+t^2\,x^A_{2}(0)+\cdots+t^n\,x^A_{n}(0)+  o(t^r)\,.$$
This leads to local coordinates $(x^A_{0},x^B_{1},\dots,x^Z_{r})$ on $\sT^rM$. The coordinate $x^K_{i}$ carries the weight $i$ and the transition functions look like
\beas
x'^A_{0}&=&x'^A_{0}(x_{0})\,,\\
x'^A_{1}&=&\frac{\pa x'^A_{0}}{\pa x^B_{0}}(x_{0})\, x^B_{1}\,,\\
x'^A_{2}&=&\frac{\pa x'^A_{0}}{\pa x^B_{0}}(x_{0})\, x^B_{2}+\frac{1}{2}\frac{\pa^2 x'^A_{0}}{\pa x^B_{0}\pa x^C_{0}}(x_{0})\, x^B_{1} x^C_{1}\,, etc.
\eeas
 In the following, we shall use  Morimoto's convention, since it leads to fewer numerical factors in formulae.
\end{remark}
\begin{example}(\cite{Grabowski:2014})\label{Gr}
If $\zt:E\to M$ is a vector bundle, then $\we^r\sT E$ is canonically a graded bundle of degree $r$ with respect to the projection
\begin{equation*}\we^r\sT\zt:\we^r\sT E\to \we^r\sT M\,.\end{equation*}
For $r=2$, the adapted coordinates {on $\we^2\sT E$} are $(x^\zr, y^a,{\dot x}^{\zm\zn}, y^{\zs b}, z^{cd} )$, ${\dot x}^{\zm\zn}=-{\dot x}^{\zn\zm}$, $z^{cd}=-z^{dc}$, coming from the decomposition of a bivector
\begin{equation*}\wedge ^2\sT E\ni u = \frac{1}{2} {\dot x}^{\zm\zn} \frac{\partial}{\partial x^\zm}\wedge \frac{\partial }{\partial x^\zn} + y^{\zs b}\frac{\partial }{\partial x^\zs}\wedge \frac{\partial }{\partial y^b} +\frac{1}{2} {z}^{cd} \frac{\partial }{\partial y^c}\wedge \frac{\partial }{\partial y^d}\,,
\end{equation*}
are of degrees $0,1,0,1,2$, respectively.
\end{example}

\bigskip
One can pick an atlas of $F$ consisting of charts for which the degrees of homogeneous local coordinates $(x^{A}, y_{w}^{a})$ are $\wu(x^{A}) =0$ and  $\wu(y_{w}^{a}) = w$, \ $1\leq w \leq k$, where $k$ is the degree of the graded bundle. The local changes of coordinates  are of the form
\begin{eqnarray}\nn
x'^{A} &=& x'^{A}(x),\\
\nonumber y'^{a}_{w} &=& y^{b}_{w} T_{b}^{\:\: a}(x) + \sum_{\stackrel{1<n  }{w_{1} + \cdots + w_{n} = w}} \frac{1}{n!}y^{b_{1}}_{w_{1}} \cdots y^{b_{n}}_{w_{n}}T_{b_{n} \cdots b_{1}}^{\:\:\: \:\:\:\:\:a}(x),
\end{eqnarray}
where $T_{b}^{\:\: a}$ are invertible and $T_{b_{n} \cdots b_{1}}^{\:\:\: \:\:\:\:\:a}$ are symmetric in the indices $b_1,\dots,b_n$.

In particular, the transition functions of coordinates of degree $r$ involve only coordinates of degree $\le r$, defining a reduced graded bundle $F_r$ of degree $r$ (we simply `forget' coordinates of degrees $>r$).

Transformations for the canonical projection $F_r\to F_{r-1}$ are linear modulo a shift by a polynomial in variables of degrees $<r$,
$$y'^{a}_{r} = y^{b}_{r} T_{b}^{\:\: a}(x) + \sum_{\stackrel{1<n  }{w_{1} + \cdots + w_{n} = r}} \frac{1}{n!}y^{b_{1}}_{w_{1}} \cdots y^{b_{n}}_{w_{n}}T_{b_{n} \cdots b_{1}}^{\:\:\: \:\:\:\:\:a}(x)\,,$$
so the fibrations $F_r\to F_{r-1}$ are {affine}.
 The linear part of $F_r$ corresponds to a vector subbundle
$\bar F_r$ over $M$ (we put $y^a_w$ in $F_r$, with $0<w<r$, equal to $0$).

In this way we get for any graded bundle $F$ of degree $k$, like for jet bundles, a tower of affine fibrations
\be\label{afffib}
F=F_{k} \stackrel{\tau^{k}}{\longrightarrow} F_{k-1} \stackrel{\tau^{k-1}}{\longrightarrow}   \cdots \stackrel{\tau^{3}}{\longrightarrow} F_{2} \stackrel{\tau^{2}}{\longrightarrow}F_{1} \stackrel{\tau^{1}}{\longrightarrow} F_{0} = M\,.
\ee
\begin{example}
In the case of the canonical graded bundle $F=\sT^kM$, we get exactly the tower of projections of jet bundles
$$\sT^kM \stackrel{\tau^{k}}{\longrightarrow} T^{k-1}M \stackrel{\tau^{k-1}}{\longrightarrow}   \cdots \stackrel{\tau^{3}}{\longrightarrow} \sT^{2}M \stackrel{\tau^{2}}{\longrightarrow}\sT M \stackrel{\tau^{1}}{\longrightarrow} F_{0} = M\,.
$$
\end{example}

\begin{remark} A graded bundle has an analog in supergeometry, namely $N$-manifold in the terminology of Roytenberg \cite{Roytenberg:2002} (see also \cite{Severa:2005}), where variables of odd  (even) degree have odd parity (resp., even parity). As commutation rules for these variables use the parity, the odd variables are nilpotent, and the variables of even degrees are by definition formal, this makes the theory quite different.
\end{remark}

\subsection{Homogeneity structures}
As we work with a $\N$-graded bundle, the $(\R,\cdot)$-action on $F$ is smooth, so we will borrow a definition of a \emph{$\N$-homogeneity structure} from \cite{Grabowski:2012}.
\begin{definition} A smooth action of the monoid $(\R,\cdot)$ on a manifold $F$ we will call a \emph{$\N$-homogeneity structure}. We will call usually simply a \emph{homogeneity structure}.
\end{definition}
As in general, the images of smooth projections on manifolds are smooth submanifolds \cite[Theorem 1.13]{Kolar:1996}, on a homogeneous manifold  $F$ we have a natural smooth projection $h_0:F\to M:=h_0(F)$ onto its smooth submanifold $M$.
Any graded bundle structure on $F$ uniquely induces a homogeneity structure $h^F$ which in homogeneous coordinates $(x^i)$ takes the form
$$h^F_t(x^i)=(t^{w_i}x^i)\,,$$ where $w_i\ge 0$ is the weight of $x^i$.
\begin{example}
The natural homogeneity structure $h$ on $\sT^kM=\sJ^k_0(\R,M)$ (see Example \ref{e3}) is given by $h_s([\phi]_k)=[\phi_s]_k$, where $[\phi]_k$  is the $k$-th jet of the curve $\phi:\R\to M$ at $0$ and $\phi_s(t)=\phi(st)$ (see \cite{Grabowski:2012}).
\end{example}
 \begin{proposition}[Grabowski-Rotkiewicz \cite{Grabowski:2012}]
For a homogeneity structure, only non-negative integer degrees of homogeneity are allowed. Moreover, the homogeneity structure is completely determined by $h_t$ for $t>0$. If $f$ is of weight $w$, then $f\circ h_t=t^wf$ also for $t\le 0$.
\end{proposition}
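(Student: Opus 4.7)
The plan is to reduce each of the three claims to the local coordinate form of a homogeneity structure supplied by the deep Grabowski--Rotkiewicz theorem recalled in the remark preceding this proposition: every smooth action of $(\R,\cdot)$ admits charts in which $h_t(x^i)=(t^{w_i}x^i)$ with non-negative integer weights $w_i$. Once this local normal form is in hand, all three claims become elementary verifications.

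For the first claim, I would invoke the polynomial structure of homogeneous functions on graded bundles already quoted: in an adapted chart $(x^i)$ the homogeneity equation $\nabla_F f=wf$, with $\nabla_F=\sum_i w_i x^i\partial_{x^i}$, admits only polynomial solutions in the positive-weight coordinates with basic coefficients, so the weight $w$ must be a non-negative integer combination of the $w_i\in\N$, hence $w\in\N$.

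For the second claim, I would first observe that the monoid law reduces the extension problem to pinning down $h_0$ and $h_{-1}$: smoothness gives $h_0=\lim_{t\to 0^+}h_t$, while for $t<0$ one has $h_t=h_{-1}\circ h_{|t|}$. The commutation $h_{-1}\circ h_s=h_s\circ h_{-1}$ for $s>0$ forces each $h_{-1}(x^i)$ to be a homogeneous polynomial $g^i$ of the same weight $w_i$. A one-sided Taylor argument at $t=0$ then finishes the job: the smooth curve $t\mapsto h_t(x^i)$ equals $t^{w_i}x^i$ for $t>0$ and $(-t)^{w_i}g^i$ for $t<0$, and matching the $w_i$-th derivatives at $0$ from both sides yields $g^i=(-1)^{w_i}x^i$. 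Thus $h_t(x^i)=t^{w_i}x^i$ for every $t\in\R$, which is completely determined by the restriction of $h$ to $t>0$.

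Finally, once Step 2 provides $h_t(x^i)=t^{w_i}x^i$ globally in $t$, any weight-$w$ homogeneous function $f$ is a polynomial $\sum_\alpha c_\alpha(x)\prod_i (x^i)^{\alpha_i}$ of total weight $\sum_i\alpha_i w_i=w$, and substituting the explicit form of $h_t$ factors $t^w$ out of every monomial, giving $f\circ h_t=t^wf$ for every $t\in\R$, including $t\le 0$. The main obstacle is that all three statements ultimately lean on the deep structure theorem for smooth $(\R,\cdot)$-actions; once that ingredient is available, the Taylor-matching in Step 2 and the polynomial substitution in Step 3 are routine, the only subtlety being to verify that matching a single order of derivatives at $t=0$ really fixes $g^i$, which is immediate since both $t^{w_i}x^i$ and $(-t)^{w_i}g^i$ are polynomials in $t$ whose joint smoothness must agree at order $w_i$.
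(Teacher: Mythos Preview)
The paper does not supply a proof of this proposition; it is merely quoted from \cite{Grabowski:2012}. So there is no in-paper argument to compare against, and your task was really to reconstruct one.

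Your reconstruction is sound but somewhat over-engineered. You appeal to the structure theorem (every smooth $(\R,\cdot)$-action admits charts with $h_t(x^i)=t^{w_i}x^i$, $w_i\in\N$), which in this paper is stated \emph{after} the proposition but foreshadowed in the earlier remark. Once that theorem is granted in its full form---i.e.\ with the coordinate formula valid for \emph{all} $t\in\R$---all three claims are immediate: the polynomial description of homogeneous functions gives claim~1, and the explicit formula $h_t(x^i)=t^{w_i}x^i$ for every $t$ trivially yields claims~2 and~3 without any Taylor matching. Your Step~2, deriving the $t\le 0$ behaviour from the $t>0$ behaviour via commutation and matching the $w_i$-th derivative at $t=0$, is correct and is presumably closer to what actually happens inside \cite{Grabowski:2012} on the way to the structure theorem; but as presented here it is redundant with the hypothesis you have already invoked.

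The one caveat worth flagging is potential circularity relative to the original source: in \cite{Grabowski:2012} the proposition may well be a lemma used in the proof of the structure theorem rather than a corollary of it. Within the present paper this is harmless, since both results are simply cited; but if you intend your write-up as a self-contained proof, you should make explicit which direction the dependency runs. A minor notational slip: in Step~2 you write ``$h_{-1}(x^i)$'' where you mean the pullback $x^i\circ h_{-1}$.
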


\begin{definition} Let $(F^i,h^i)$ be graded bundles for $i=1,2$. We say that a smooth map  $\Phi:F^1\to F^2$ \emph{is of degree $\zl$} if the pull-backs $f\circ\zF$ of (local) homogeneous functions $f$ of degree $w$ on $F^2$ are homogeneous of degree $w+\zl$. We call $\zF$ a \emph{morphism of graded bundles} if $\zF$ is of degree $0$.
\end{definition}
\noindent It is easy to see also the following.
\begin{proposition}
The map $\Phi:F^1\to F^2$ is a morphism of graded bundles if and only if
$h^{F^2}_t\circ\zF=\zF\circ h^{F^1}_t$, and if and only if the weight vector fields $\nabla_{F^1}$ and $\nabla_{F^2}$ are $\zF$-related.
\end{proposition}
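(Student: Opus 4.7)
The plan is to prove the two equivalences separately, relying throughout on the fact (recalled just above) that on a graded bundle a homogeneous function of weight $w$ satisfies $f\circ h_t = t^w f$ for \emph{all} $t\in\R$, and that the weight vector field $\nabla_F$ generates the one-parameter subgroup $\{h_{e^s}\}_{s\in\R}$.

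For the equivalence of (i) $\Phi$ is a morphism of graded bundles and (ii) $h^{F^2}_t\circ\Phi=\Phi\circ h^{F^1}_t$, one direction is a one-line pull-back computation: if (ii) holds and $f$ is homogeneous of weight $w$ on $F^2$, then
\[
(f\circ\Phi)\circ h^{F^1}_t \;=\; f\circ h^{F^2}_t\circ\Phi \;=\; t^w(f\circ\Phi),
\]
so $f\circ\Phi$ is of weight $w$ and $\Phi$ has degree $0$. Conversely, if $\Phi$ has degree $0$, I would work locally: pick homogeneous coordinates $(x^i)$ on $F^2$ with weights $w_i\ge 0$, set $\Phi^i:=x^i\circ\Phi$, and note that by (i) each $\Phi^i$ is homogeneous of weight $w_i$. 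Then for every $t\in\R$,
\[
x^i\circ(\Phi\circ h^{F^1}_t)\;=\;\Phi^i\circ h^{F^1}_t\;=\;t^{w_i}\Phi^i\;=\;(t^{w_i}x^i)\circ\Phi\;=\;x^i\circ(h^{F^2}_t\circ\Phi),
\]
and since the $x^i$'s separate points on a coordinate chart this yields (ii). Here the crucial input is that the identity $f\circ h_t=t^w f$ holds for all $t\in\R$, not just $t>0$ — this is guaranteed by the Grabowski--Rotkiewicz structure theorem for graded (i.e.\ $\N$-graded) bundles.

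For the equivalence of (ii) and (iii), I would use the standard correspondence between flow-relatedness of vector fields and $\Phi$-intertwinement of their flows. Since $\nabla_{F^1}$ and $\nabla_{F^2}$ are complete and generate the flows $h^{F^1}_{e^s}$ and $h^{F^2}_{e^s}$ respectively, $\nabla_{F^1}$ and $\nabla_{F^2}$ are $\Phi$-related if and only if $h^{F^2}_{e^s}\circ\Phi=\Phi\circ h^{F^1}_{e^s}$ for all $s\in\R$, i.e.\ (ii) holds for all $t>0$. To pass from $t>0$ to all $t\in\R$ one uses continuity to reach $t=0$, and then the extension to $t<0$ follows from the polynomial description of graded bundles: in homogeneous coordinates $h_t$ is polynomial in $t$ with coefficients polynomial in the fiber coordinates, so an identity of smooth maps valid for $t>0$ extends by analytic continuation to all $t\in\R$.

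The only genuine subtlety is this last extension step, from $t>0$ to arbitrary $t$, since the flow interpretation of the weight vector field only produces the semigroup $\{h_t:t>0\}$. Fortunately, in the graded (non-$\Z$) setting this is automatic because $h$ is by definition smooth on the whole monoid $(\R,\cdot)$ and polynomial in $t$; the same argument would fail in the general $\Z$-graded case, where $h$ is only an action of $\R^\times$.
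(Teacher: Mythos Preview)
The paper does not actually prove this proposition; it is stated with the preface ``It is easy to see also the following'' and left to the reader. Your sketch correctly supplies the omitted details and is essentially the argument one would expect.

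One small imprecision is worth flagging. In the step (iii)$\Rightarrow$(ii), you say the extension from $t>0$ to $t<0$ goes by ``analytic continuation'' because $h_t$ is polynomial in $t$. But while $h^{F^2}_t\circ\Phi$ is indeed polynomial in $t$ in homogeneous coordinates, the other side $\Phi\circ h^{F^1}_t$ need not be analytic in $t$ a priori, since $\Phi$ is merely smooth. The clean fix is the one you already used in the (i)$\Rightarrow$(ii) direction: once (ii) holds for $t>0$, each $\Phi^i=x^i\circ\Phi$ satisfies $\Phi^i\circ h^{F^1}_t=t^{w_i}\Phi^i$ for $t>0$, hence is homogeneous of weight $w_i$; then the Grabowski--Rotkiewicz result (stated in the paper as the proposition immediately preceding this one) upgrades this to all $t\in\R$. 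Alternatively, simply observe that (ii) for $t>0$ already implies (i), which you have shown implies (ii) for all $t$, so the chain closes without any analyticity argument.
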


The fundamental fact in graded bundle theory is that graded bundles and homogeneity structures are equivalent concepts. This is a non-trivial result, contrary Proposition \ref{Zequi} for $\Z$-graded bundles.
 \begin{theorem}[Grabowski-Rotkiewicz \cite{Grabowski:2012}]
Associating the homogeneity structure with a graded bundle is an equivalence of categories. In particular, for any homogeneity structure $h$ on a manifold $F$, there is a smooth submanifold $M=h_0(F)\subset F$ and a non-negative integer $k\in\mathbb N$ such that $h_0:F\to M$ is canonically a graded bundle of degree $k$ whose homogeneity structure coincides with $h$. In other words, $h_0:F\to M$ is a fibration with the typical fiber $\R^n$ and there is an atlas on $F$ consisting of local homogeneous functions $(x^i,y^j)$ on
$$(h_0)^{-1}(U)\simeq U\ti\R^n$$ such that
$$h_t(x^i,y^j)=(x^i,t^{w_j}y^j)\,,$$ where $w_j>0$ is the weight of $y^j$.
\end{theorem}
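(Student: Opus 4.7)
The plan is to extract a graded bundle structure directly from the homogeneity structure $h$, working entirely with the data of the $(\R,\cdot)$-action. First I would introduce the \emph{Euler vector field} (weight vector field)
\[
\nabla_{F}(p)=\frac{d}{dt}\bigg|_{t=1}h_{t}(p),
\]
which is well-defined because $s\mapsto h_{e^s}$ is a genuine one-parameter group. Then, since $h_{0}\circ h_{0}=h_{0}$, the map $h_{0}$ is a smooth idempotent, and by the cited theorem that images of smooth projections are submanifolds, $M:=h_{0}(F)$ is a smooth submanifold of $F$ and $h_{0}\colon F\to M$ is a smooth surjective submersion. Note that each point of $M$ is fixed by every $h_{t}$, so $\nabla_{F}$ vanishes on $M$.

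The key tool is a homogeneous decomposition of smooth functions. For $f\in\Ci(F)$ and $p\in F$, the map $t\mapsto f(h_{t}(p))$ is smooth in $t$ (including $t=0$), so I would define
\[
f_{w}(p)=\frac{1}{w!}\frac{d^{w}}{dt^{w}}\bigg|_{t=0}f(h_{t}(p)),\qquad w\in\N.
\]
A direct calculation using $h_{t}\circ h_{s}=h_{ts}$ shows $f_{w}\circ h_{s}=s^{w}f_{w}$ for all $s$, so $f_{w}$ is homogeneous of integer degree $w$. Conversely, if a nonzero $g$ satisfies $g\circ h_{t}=t^{w}g$ for $t>0$ and $t\mapsto g\circ h_{t}$ is smooth at $t=0$, then $t\mapsto t^{w}$ must be smooth there, forcing $w\in\N$. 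This already shows that only non-negative integer weights occur, and that the Taylor series in $t$ of $f\circ h_{t}$ gives, for each $k$, a decomposition
\[
f=f_{0}+f_{1}+\cdots+f_{k}+R_{k},
\]
where $R_{k}$ vanishes to order $k+1$ along $M$ in the fiber direction.

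Next I would produce homogeneous local coordinates near a chosen point $p\in M$. Since $\nabla_{F}$ vanishes at $p$, its linearization $(\nabla_{F})_{*,p}\in\End(T_{p}F)$ is well-defined, and the relation $\Exp(s\nabla_{F})=h_{e^{s}}$ implies that its eigenvalues are precisely the non-negative integers appearing as weights at $p$; the zero-eigenspace is $T_{p}M$ and there is a direct sum decomposition $T_{p}F=\bigoplus_{w\ge 0}V_{p,w}$. Starting with ordinary coordinates $x^{1},\dots,x^{d}$ on $M$ near $p$ (which have weight $0$ via pullback by $h_{0}$), I would, for each positive weight $w$, pick smooth functions on $F$ whose weight-$w$ components $f_{w}$ constructed above have differentials at $p$ spanning $V_{p,w}^{*}$; the existence of such generators follows from the homogeneous decomposition applied to any system of functions whose differentials span $T_{p}^{*}F$. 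Taking $y^{a}_{w}:=f_{w}$ for such a choice, the combined system $(x^{i},y^{a}_{w})$ furnishes, by the inverse function theorem, local coordinates in which $h_{t}$ acts as $(x^{i},y^{a}_{w})\mapsto(x^{i},t^{w}y^{a}_{w})$, which is exactly the graded bundle form. Finally, the equivalence of categories is almost formal: a smooth map $\Phi\colon F^{1}\to F^{2}$ intertwining $h^{1}$ and $h^{2}$ satisfies $(f\circ\Phi)\circ h_{t}^{1}=t^{w}(f\circ\Phi)$ for every weight-$w$ function $f$ on $F^{2}$, hence pulls homogeneous functions to homogeneous functions of the same weight and is therefore a morphism of graded bundles; the converse is immediate from the coordinate description of $h_{t}$.

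The main obstacle in this approach is the combination of two things that are both needed at once in the local coordinate step: showing that the decomposition $f=\sum_{w}f_{w}+R_{k}$ converges in a strong enough sense (so that smooth homogeneous generators actually exist for every weight realized in $T_{p}F$), and showing that the eigenvalues of $(\nabla_{F})_{*,p}$ are exactly the weights that arise and that no fractional or negative weights can sneak in. Both are consequences of the smoothness of $h$ at $t=0$, but extracting them cleanly is the technical heart of the theorem; once this is in place, the rest of the argument proceeds by a standard application of the inverse function theorem.
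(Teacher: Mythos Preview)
The paper does not actually prove this theorem: it is stated with attribution to \cite{Grabowski:2012} and used as input, with no proof given in the text. So there is no ``paper's own proof'' to compare against.

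On its own merits, your plan follows the strategy of the original reference: extract the weight vector field from the $(\R,\cdot)$-action, use smoothness at $t=0$ to force non-negative integer weights, build homogeneous coordinate functions from the Taylor decomposition $f\mapsto f_w$, and invoke the inverse function theorem near a point of $M$. That is the right skeleton, and you correctly flag the two genuine technical points (integrality of weights and existence of enough smooth homogeneous generators).

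There is, however, one substantive gap you do not mention. The inverse function theorem only gives you homogeneous coordinates on a \emph{neighbourhood} of a point $p\in M$ inside $F$; it does not by itself show that this neighbourhood contains an entire fiber $h_0^{-1}(p)$, nor that the fiber is diffeomorphic to $\R^n$. A priori the fiber could fail to be covered by a single such chart. The missing step is to use the action itself: for any $q$ in the fiber over $p$, the curve $t\mapsto h_t(q)$ lands in your coordinate chart for small $t>0$, and homogeneity of the $y$-coordinates then lets you extend them along the whole $h$-orbit, showing both that the fiber is all of $\R^n$ and that the chart trivializes $h_0^{-1}(U)\simeq U\times\R^n$. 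Without this argument you have only a local normal form near $M$, not the graded bundle structure claimed in the statement.
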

\noindent By definition, the \emph{degree of $h$} is the degree of the graded bundle $h_0:F\to M$, i.e. the biggest $w_i$. We will refer to coordinates $(x^i,y^j)$ as simply to homogeneous coordinates. The corresponding weight vector field reads
$$\n_F=\sum_jw_jy^j\,\pa_{y^j}\,.$$
However, it is sometimes convenient not to distinguish coordinates $(x^i)$ on $M$ and $(y^j)$ in the fibers.
In such cases, the coordinates $x^i$ be homogeneous coordinates on $F$ (not on $M$) with weights $w_i\ge 0$, and the coordinates on $M$ are distinguished as $(x^i)_{w_i=0}$, i.e. those $x^i$ which have weight 0. The weight vector field in such coordinates reads $\n_F=\sum_iw_ix^i\,\pa_{x^i}$ which is the same as
$$\n_F=\sum_{w_i\ne 0}w_ix^i\,\pa_{x^i}\,.$$
In the rest of the paper, we will mostly understand graded bundles as homogeneity structures.

The proposition below is obvious.
\begin{proposition}
Let $F_1\to M_1$ and $F_2\to M_2$ be graded bundles of degrees $k_1$ and $k_2$, respectively. Denote local homogeneous coordinates in $F_1$ with $(x^i)$ of weights $w_i$, and in $F_2$ with $(y^j)$ of weights $v_j$. Then, the Cartesian product $F_1\ti F_2\to M_1\ti M_2$ is canonically a graded bundle of degree $\max(k_1,k_2)$ with respect to the weight vector field $\nabla_{F_1\ti F_2}$ such that
\be\label{cprod}\nabla_{F_1\ti F_2}=(\nabla_{F_1},\nabla_{F_2})=\sum_iw_ix^i\pa_{x^i}+\sum_jv_jy^j\pa_{y^j}\,.\ee
Moreover, if $M_1=M_2=M$, then $F_1\ti_MF_2\to M$ is also canonically a graded bundle of degree $\max(w_i,)$
whose weight vector field in coordinates $\left((x^i)\,,(y^j)_{v_j>0}\right)$ reads as (\ref{cprod}). In all these cases the homogeneity structure $h^{F_1\ti_MF_2}_t$ can be written as $(h^{F^1}_t\ti h^{F^2}_t)$.
\end{proposition}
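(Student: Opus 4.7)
The plan is to exploit the equivalence between graded bundles and homogeneity structures (the Grabowski--Rotkiewicz theorem), so that instead of checking the graded bundle axioms directly via transition functions, I only need to verify that the product map $h^{F_1}_t \times h^{F_2}_t$ defines a smooth monoid action on $F_1\times F_2$ whose weight vector field takes the claimed local form.

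First, I would define
\[
h^{F_1\times F_2}:\R\times (F_1\times F_2)\longrightarrow F_1\times F_2,\qquad h^{F_1\times F_2}_t(p_1,p_2)=\bigl(h^{F_1}_t(p_1),\,h^{F_2}_t(p_2)\bigr).
\]
Smoothness is immediate from smoothness of $h^{F_1}$ and $h^{F_2}$, and the monoid identity $h_t\circ h_s=h_{ts}$ follows componentwise. Hence $h^{F_1\times F_2}$ is a homogeneity structure on $F_1\times F_2$, and by the Grabowski--Rotkiewicz theorem it is associated to a unique graded bundle structure on the fibration $h_0^{F_1\times F_2}:F_1\times F_2\to h_0^{F_1\times F_2}(F_1\times F_2)$. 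Since $h_0^{F_1\times F_2}(p_1,p_2)=(h_0^{F_1}(p_1),h_0^{F_2}(p_2))$, the base is precisely $M_1\times M_2$. Using product coordinates $(x^i,y^j)$, one has $h^{F_1\times F_2}_t(x^i,y^j)=(t^{w_i}x^i,t^{v_j}y^j)$, so the associated weight vector field is the sum $\sum_i w_i x^i\partial_{x^i}+\sum_j v_j y^j\partial_{y^j}$, matching (\ref{cprod}); the degree is the maximum weight appearing, i.e.\ $\max(k_1,k_2)$.

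For the fibered product case with $M_1=M_2=M$, I would observe that $F_1\times_MF_2\subset F_1\times F_2$ is the preimage of the diagonal $\Delta_M\subset M\times M$ under the smooth map $(\tau_1,\tau_2):F_1\times F_2\to M\times M$. Since $h^{F_i}_t$ covers the identity on $M$, the action $h^{F_1\times F_2}_t$ preserves the diagonal condition $\tau_1(p_1)=\tau_2(p_2)$, so it restricts to a smooth monoid action on $F_1\times_MF_2$. Applying Grabowski--Rotkiewicz again to this restricted action yields the claimed graded bundle structure over $h_0(F_1\times_MF_2)=M$, and choosing local homogeneous coordinates of the form $(x^i,(y^j)_{v_j>0})$ (where the weight-zero coordinates on both factors are identified along the common base $M$) produces the asserted local expression for the weight vector field.

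There is no real obstacle here; everything reduces to an application of Grabowski--Rotkiewicz once the product and restricted actions are shown to be smooth monoid actions. The only minor points to be careful about are (i) that $F_1\times_M F_2$ is actually a smooth manifold, which follows from $\tau_1,\tau_2$ being submersions in the appropriate sense (the local product structure of graded bundles makes this transparent), and (ii) the small typo in the statement of the maximal degree in the fibered product case, which should read $\max(k_1,k_2)$ for consistency with the direct product case.
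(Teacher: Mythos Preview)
Your proposal is correct and is exactly the argument the paper has in mind: the paper introduces this proposition with the words ``The proposition below is obvious'' and gives no proof, but the entire framework of the paper is the Grabowski--Rotkiewicz equivalence between graded bundles and smooth $(\R,\cdot)$-actions, so your verification that $h^{F_1}_t\times h^{F_2}_t$ is a smooth monoid action (and restricts to $F_1\times_M F_2$) is precisely the intended justification. Your remark about the typo $\max(w_i,)$ is also on target.
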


\subsection{Double graded bundles}
We can extend the concept of a \emph{double vector bundle} of Pradines \cite{Pradines:1974} to \emph{double graded bundles}.
However, thanks to our simple description of graded bundles in terms of associated homogeneity structures, the `diagrammatic' definition of Pradines can be substantially simplified.

As two graded bundle structure on the same manifold are described by just two homogeneity structures, the obvious concept of compatibility leads to the following (cf. Grabowski-Rotkiewicz \cite{Grabowski:2012}):
\begin{definition} A \emph{double graded bundle} is a manifold equipped with two graded bundle structures with the associated homogeneity structures $h^1,h^2$ which are \emph{compatible} in the sense that
$$h^1_t\circ h^2_s=h^2_s\circ h^1_t\quad \text {for all\ } s,t\in\R\,.$$
A double graded  bundle in which one graded  structure is that of a vector bundle is called a \emph{graded-linear bundle}, shortly a $\GrL$-bundle. In another terminology, it can be also called a $\VB$-graded bundle.
The coordinates in double graded bundles have bi-degrees composed from two degrees with  respect to the two homogeneity structures.
\end{definition}
\noindent The above condition can be also formulated as commutation of the corresponding weight vector fields, $[\nabla^1,\nabla^2]=0$.
 \begin{theorem}[Grabowski-Rotkiewicz \cite{Grabowski:2009}]
The concept of a double  vector bundle, understood as a particular double graded bundle in the above sense, coincides with that of Pradines \cite{Pradines:1974} and Mackenzie \cite{Mackenzie:1992}.
\end{theorem}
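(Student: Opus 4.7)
The plan is to leverage the preceding Grabowski--Rotkiewicz theorem, which identifies homogeneity structures with graded bundles, specialised to the fact that a homogeneity structure of degree one is precisely a vector bundle structure. Under this identification, the two notions of morphism also coincide: a smooth map between vector bundles is a vector bundle morphism if and only if it intertwines the two scalar multiplications (addition and zero section being reconstructible from the $(\R,\cdot)$-action, as in \cite{Grabowski:2009}). Thus the whole proof reduces to showing that the single axiom $h^1_t\circ h^2_s=h^2_s\circ h^1_t$ captures exactly the package of Pradines--Mackenzie compatibility axioms defining a double vector bundle.

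For the easy direction, suppose $D$ is a double vector bundle in the classical sense of Pradines and Mackenzie, with two vector bundle structures $D\to A$ and $D\to B$. The linearity axiom asserts that the scalar multiplication $h^2_s$ of the second structure is a vector bundle morphism with respect to the first, and vice versa. Since a vector bundle morphism intertwines scalar multiplications by definition, one immediately obtains $h^1_t\circ h^2_s=h^2_s\circ h^1_t$; this produces a double graded bundle of bi-degree $(1,1)$.

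For the converse, given two compatible degree-one homogeneity structures $h^1,h^2$ on $D$, each $h^i_0:D\to A_i$ (where $A_1=A$, $A_2=B$) is a vector bundle by the preceding theorem. The compatibility equation, read as $h^1_t\circ h^2_s=h^2_s\circ h^1_t$, says that for every fixed $s$ the map $h^2_s:D\to D$ intertwines the $h^1$-action, hence is a morphism of the $h^1$-vector bundle. Setting $s=0$ yields $h^1_t\circ h^2_0=h^2_0\circ h^1_t$, so $h^2_0$ is also $h^1$-equivariant; therefore it descends to the base and exhibits $A=h^1_0(D)$ as invariant under $h^2$, so that $A$ inherits a homogeneity structure of degree one and becomes a vector bundle over $M:=h^1_0(h^2_0(D))=h^2_0(h^1_0(D))$. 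Symmetrically $B$ is a vector bundle over $M$, completing the side diagram. Moreover, the fact that $h^2_s$ is an $h^1$-morphism for every $s$ is exactly the classical linearity axiom of Pradines--Mackenzie, and it entails linearity of the derived fibrewise addition since addition is uniquely determined by scalar multiplication via the reconstruction of \cite{Grabowski:2009}.

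The main obstacle is the conceptual one: verifying that the core bundle and the full package of axioms of the classical definition (including linearity of the addition of each structure with respect to the other, compatibility of zero sections, and the existence of the canonical projection to $M$ via either route) are truly subsumed by the simple commutation $h^1_t\circ h^2_s=h^2_s\circ h^1_t$, without any separate hypothesis about additions or cores. This rests on the key lemma from \cite{Grabowski:2009} that morphisms of vector bundles are characterised by equivariance of scalar multiplication, so once compatibility of the two homogeneity structures is established, the two additions are automatically compatible, the two zero sections coincide on $M$, and the core $C=(h^1_0)^{-1}(0_M)\cap(h^2_0)^{-1}(0_M)$ automatically acquires a single vector bundle structure over $M$ (the two induced scalar multiplications agreeing on $C$ by the commutation relation applied with $t=s$).
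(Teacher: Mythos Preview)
The paper does not supply its own proof of this statement; it is quoted as a theorem from the cited reference \cite{Grabowski:2009}, with no argument given in the present text. Your outline is essentially the strategy of that reference: the crucial input is the characterisation of vector bundle morphisms as maps intertwining scalar multiplication, together with the fact that fibrewise addition is recoverable from the $(\R,\cdot)$-action, so that the single commutation axiom $h^1_t\circ h^2_s=h^2_s\circ h^1_t$ subsumes the full Pradines--Mackenzie compatibility package.

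One small imprecision: your parenthetical justification for why the two scalar multiplications agree on the core is not quite right. Setting $t=s$ in the commutation relation only gives $h^1_t\circ h^2_t=h^2_t\circ h^1_t$, which does not by itself imply $h^1_t|_C=h^2_t|_C$. The actual reason is that, once one has a bi-homogeneous atlas (whose existence is part of the content of the cited theorem), the core coordinates carry bi-degree $(1,1)$, so each of $h^1_t$ and $h^2_t$ scales them by $t$. This is a consequence of the local normal form, not a formal manipulation of the commutation identity.
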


\smallskip\noindent With any  double graded bundle we can associate a commutative diagram of graded bundles and their morphisms:
$$\xymatrix{
F\ar[rr]^{h^1_0} \ar[d]^{h^2_0} && M_1\ar[d]^{h^2_0} \\
M_2\ar[rr]^{h^1_0} && M_1\cap M_2 \,.}
$$
However, this diagram does not contain full information about the double graded bundle structure. Usually, For a $\GrL$-bundle $F$ with a homogeneous structure $h$ of degree $k$ and a compatible homogeneous structure $h'$ of  degree one (vector bundle) we will write the above diagram in the form
\begin{equation}\label{bu:4} \xymatrix{
F\ar[rr]^{h_0} \ar[d]^{h'_0} && M\ar[d]^{h'_0} \\
N\ar[rr]^{h_0} && M\cap N \,,}
\end{equation}
i.e. the base of the graded bundle of degree $k$ is denoted $M$ and the base of the vector bundle structure is denoted $N$. We will often use this convention. In the $\GrL$ case, we will use bi-homogeneous local coordinates (the bi-degree is indicated below the coordinate):
\begin{equation}\label{zgc}
(\underbrace{x^{A}}_{(0,0)}, ~ \underbrace{y_{w}^{a}}_{(w,0)}, ~  \underbrace{z^{i}}_{(0,1)}, ~ \underbrace{{u}^{j}_{s}}_{(s,1)}).
\end{equation}
Here $w,s>0$.
In particular, $(x^{A},y_{w}^{a})$ are coordinates in $N$\,, $(x^{A},{z}^{i})$ are coordinates in $M$, and $(x^A)$ are coordinates in $M\cap N$.

 \begin{example}\cite[Example 5.1]{Grabowski:2012} The iterated higher tangent bundles
$$\sT^{m,n}M:=\sT^m\sT^nM\simeq\sT^n\sT^mM$$
are canonically double graded bundles.
\end{example}

The double vector bundle structures were strongly used in the Tulczyjew's approach to mechanics \cite{Tulczyjew:1974,Tulczyjew:1977}, which recently was extended to mechanics on algebroids \cite{Grabowska:2008,Grabowska:2011,Grabowska:2006}.

\begin{example}  If $E$ is a vector bundle over $M$, then $\wedge^r\sT E$  (\cite{Grabowski:2014}) is a $\GrL$-bundle. The diagram
	$$
	{\xymatrix@R-5mm @C-2mm{ & \wedge^r\sT E \ar[ld]_{\zt^r_E} \ar[rd]^{\we^r\sT\zt} & \cr
			\quad E \ar[rd] & & \wedge^r\sT M \ar[ld]\,.  \cr & M  & }}
	$$
shows a graded bundle structure ${\we^r\sT\zt}:\wedge^r\sT E\to \wedge^r\sT M$ and a vector bundle structure $\zt^r_E: \wedge^r\sT E\to E$ \cite{Grabowski:2014}.  Both structures are compatible.
	
\noindent For the case $r=2$ with the homogeneous local coordinates $(x^\zr, y^a,{\dot x}^{\zm\zn}, y^{\zs b}, z^{cd} )$ (see Example \ref{Gr}), the Euler vector field associated with the vector bundle $\zt^2_E: \wedge^2\sT E\to E$ is
	\[
	X_{\wedge^2\sT E}=\dot x^{\lambda\nu}\frac{\pa}{\pa \dot x^{\lambda\nu}}+y^{\zs b}\frac{\pa}{\pa y^{\zs b}}+\dot z^{cd}\frac{\pa}{\pa \dot z^{cd}},
	\]
	and the weight vector field associated with the graded bundle ${\we^2\sT\zt}:\wedge^2\sT E\to \wedge^2\sT M$ is the bi-tangent lift of the vector field $X_E$ to the bundle of bivectors, defined by
	\[
	d^2_{\sT}X_E=\kappa^2_M\circ \wedge ^2 \sT X_E=y^a\frac{\pa}{\pa y^a}+y^{\zs b}\frac{\pa}{\pa  y^{\zs b}}+2\dot z^{cd}\frac{\pa}{\pa \dot z^{cd}}\,,
	\]
	where the mapping  $\kappa^2_M:\sT\wedge ^2\sT M\to \wedge ^2\sT \sT M$ (for more details see \cite{Grabowski:2014}) is an isomorphism of double vector bundles.
	
	In conclusion, the coordinates $(x^\mu,y^a,\dot x^{\mu\nu},y^{\zs a},\dot z^{cd})$  are of bi-degree $(0,0)$, $(0,1)$, $(1,0)$, $(1,1)$  respectively, and the corresponding homotheties read
	\[
	\tilde h_s(x^\mu,y^a,\dot x^{\mu\nu},y^{\zs b},\dot z^{cd})=(x^\mu,y^a,s\dot x^{\mu\nu},s\cdot y^{\zs b},s\cdot\dot z^{cd})
	\]
	and
	\[
	h_t(x^\mu,y^a,\dot x^{\mu\nu},y^{\zs b},\dot z^{cd})=(x^\mu,t\cdot y^a,\dot x^{\mu\nu},t\cdot y^{\zs b},t^2\cdot \dot z^{cd})\,,
	\]
The commutativity of the above homotheties shows that $(\wedge^2\sT E,h_t)$ is indeed a weighted vector bundle of degree two. In particular, the $\GrL$-bundle $\we^2\sT \we^2\sT^\* M$,
 $$
{\xymatrix@R-5mm @C-10mm{ & \wedge^2\sT \we^2\sT^\* M \ar[ld] \ar[rd] & \cr
		\quad \we^2\sT^\* M \ar[rd] & & \wedge^2\sT M \ar[ld]  \cr & M  & }}
$$
is a GL-bundle of degree $2$.
Let $\pi_M:\sT ^*M\to M$ and $\pi^2_M: \wedge ^2 \sT^* M	\to M$ be the projections for the vector bundles $\sT ^*M$ and $\wedge ^2 \sT^* M$ onto $M$, then
$$\wedge ^2\sT \pi^2_M: \wedge ^2 \sT \wedge ^2 \sT^*M \to \wedge ^2\sT M$$
is the projection for the graded bundle $ \wedge ^2 \sT \wedge ^2 \sT^*M $ onto $\wedge ^2\sT M$ \cite{Grabowski:2014}. We can take the coordinates $(x^\mu, p_{\lambda \kappa}, \dot x^{\nu\sigma},y^{\eta}_{\theta \rho},\dot p_{\gamma \delta \epsilon\xi})$ on $\wedge ^2 \sT \wedge ^2 \sT^*M$. The Euler vector field of the vector bundle 
$$\wedge ^2 \sT \wedge ^2 \sT^*M \to \wedge ^2\sT^* M$$ is
\[
X_{\wedge ^2 \sT \wedge ^2 \sT^*M}=\dot x^{\nu\sigma}\frac{\pa}{\pa \dot x^{\nu\sigma}}+y^{\eta}_{\theta \rho}\frac{\pa}{\pa y^{\eta}_{\theta \rho}}+\dot p_{\gamma \delta \epsilon\xi}\frac{\pa}{\pa \dot p_{\gamma \delta \epsilon\xi}},
\]
and the weight vector field for the graded bundle $ \wedge ^2 \sT \wedge ^2 \sT^*M \to \wedge ^2\sT M$ is
\[
d^2_{\sT}X_{\wedge ^2 \sT^* M}=p_{\lambda \kappa} \frac{\pa}{\pa p_{\lambda \kappa}}+y^{\eta}_{\theta \rho}\frac{\pa}{\pa y^{\eta}_{\theta \rho}}+2\dot p_{\gamma \delta \epsilon\xi}\frac{\pa}{\pa \dot p_{\gamma \delta \epsilon\xi}}\,.
\]
This $\GrL$-bundle was used in \cite{Bruce:2016a,Grabowski:2014} for constructing a dynamics of strings.
\end{example}
\noindent All this can be extended to \emph{$n$-fold graded bundles} in an obvious way.
\begin{definition} A \emph{$n$-fold graded bundle} is a manifold equipped with $n$ graded bundle structures with the associated homogeneity structures $h^1,\dots,h^n$ which are \emph{compatible} in the sense that
$$h^i_t\circ h^j_s=h^j_s\circ h^i_t\quad \text {for all\ } s,t\in\R\quad \text{and}\quad i,j=1,\dots,n\,.$$
\end{definition}

\begin{proposition}
Let $(F,h^1,\dots,h^n)$ be a $n$-fold graded bundle. Then, $(F,h^{i_1}\circ\cdots\circ h^{i_k})$, where $(h^{i_1}\circ\cdots\circ h^{i_k})_t=h^{i_1}_t\circ\cdots\circ h^{i_k}_t$ is a graded bundle for all  $i_1,\dots,i_k\in \{1,\dots,n\}$, with the corresponding weight vector field
    $$\nabla=\nabla^{i_1}_F+\dots+\nabla^{i_k}_F\,.$$
\end{proposition}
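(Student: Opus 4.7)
The plan is to show that $\tilde h_t := h^{i_1}_t \circ \cdots \circ h^{i_k}_t$ is a smooth action of the monoid $(\R,\cdot)$ on $F$ and then invoke the Grabowski--Rotkiewicz equivalence to conclude that it comes from a graded bundle structure; the weight vector field will then fall out either by differentiation at $t=1$ or by direct inspection in local coordinates.

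First I would verify the monoid-action axioms. Clearly $\tilde h_1 = \mathrm{id}_F$ since each $h^{i_j}_1 = \mathrm{id}_F$. For the multiplicativity $\tilde h_t \circ \tilde h_s = \tilde h_{ts}$, I would write out
\[
\tilde h_t \circ \tilde h_s = h^{i_1}_t\circ\cdots\circ h^{i_k}_t \circ h^{i_1}_s\circ\cdots\circ h^{i_k}_s
\]
and use the commutativity hypothesis $h^i_t\circ h^j_s = h^j_s\circ h^i_t$ (valid for all pairs $i,j$, including $i=j$ since each $h^{i_j}$ is itself a monoid action, so $h^{i_j}_t\circ h^{i_j}_s = h^{i_j}_{ts}$) to shuffle the middle factors so that, for each $\ell$, the two $h^{i_\ell}$ factors become adjacent. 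Collapsing each pair via $h^{i_\ell}_t\circ h^{i_\ell}_s = h^{i_\ell}_{ts}$ then yields $h^{i_1}_{ts}\circ\cdots\circ h^{i_k}_{ts} = \tilde h_{ts}$. Smoothness of $\tilde h : \R\times F \to F$ is immediate as a composition of smooth maps.

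Having established that $(F,\tilde h)$ is a smooth $(\R,\cdot)$-action, the Grabowski--Rotkiewicz theorem quoted above guarantees that it is the homogeneity structure of a canonical graded bundle $\tilde h_0 : F \to \tilde h_0(F)$. To identify the weight vector field, I would argue in two equivalent ways. Locally, choose a multi-homogeneous chart $(x^a)$ in which each $\nabla^j_F = \sum_a w_j^{(a)}\, x^a\,\partial_{x^a}$ with $w_j^{(a)}\in\N$; then $h^{j}_t(x^a) = (t^{w_j^{(a)}} x^a)$ and
\[
\tilde h_t(x^a) = \bigl(t^{w_{i_1}^{(a)}+\cdots+w_{i_k}^{(a)}}\, x^a\bigr),
\]
so the weight vector field of $\tilde h$ is $\sum_a (w_{i_1}^{(a)}+\cdots+w_{i_k}^{(a)})\,x^a\partial_{x^a} = \nabla^{i_1}_F+\cdots+\nabla^{i_k}_F$. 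Alternatively, since the commutativity of the $h^j$ implies $[\nabla^i_F,\nabla^j_F]=0$, the flows $\exp(s\nabla^j_F)$ commute, and for $t>0$ we get
\[
\tilde h_t = \exp((\log t)\nabla^{i_1}_F)\circ\cdots\circ\exp((\log t)\nabla^{i_k}_F) = \exp\bigl((\log t)(\nabla^{i_1}_F+\cdots+\nabla^{i_k}_F)\bigr),
\]
which identifies the infinitesimal generator of $\tilde h$ with $\nabla^{i_1}_F+\cdots+\nabla^{i_k}_F$.

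I do not expect any genuine obstacle: the only point requiring care is the combinatorial shuffling argument in the multiplicativity check, where one must make sure that the commutation relations suffice to bring together both copies of each $h^{i_\ell}$ regardless of whether the indices $i_1,\dots,i_k$ are distinct. This is handled by the fact that compatibility is required for all pairs (including coincident indices, where it is automatic because each $h^j$ is itself a monoid action).
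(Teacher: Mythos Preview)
The paper states this proposition without proof, treating it as immediate from the Grabowski--Rotkiewicz equivalence between graded bundles and smooth $(\R,\cdot)$-actions. Your argument is correct and is precisely the natural justification: check the monoid-action axioms for $\tilde h$ using the compatibility hypothesis, apply the equivalence theorem, and then identify the weight vector field. One small remark: your first method for computing $\nabla$ presupposes the existence of simultaneously homogeneous (multi-graded) coordinates for the $n$-fold structure, which is itself a nontrivial consequence of the commuting-actions hypothesis (established in the references and used freely throughout the paper); your second method via commuting flows sidesteps this and is self-contained.
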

\begin{remark}
All the concepts and definitions in this section apply \emph{mutatis mutandis} to $\Z$-graded bundles, so we have
\emph{double $\Z$-graded bundles}, \emph{$\Z$-graded-linear bundles ($\ZGrL$ bundles)} etc.
\end{remark}

\noindent Other natural examples of double and $n$-tuple graded bundles are obtained with the use of lifts.

\subsection{Tangent and phase lifts of homogeneity structures}
Tangent and phase lifts of homogeneity structures have been introduced in \cite[Section 2.3]{Grabowski:2013}. The tangent lifts can be generalized to higher tangent lifts (see the next section). Let $h_0:F\to M$ be a graded bundle $(F,h)$ of degree $k$ and let $x=(x^i)$ be local homogeneous coordinates in $F$. We have $h_t(x)=(t^{w_i}x^i)$ and $\nabla_F=\sum_iw_i\,x^i\,\pa_{x^i}$. The tangent bundle $\sT F$ is naturally a $\GrL$-bundle consisting of the tangent lift of the weight vector filed $\nabla _F$ and the Euler vector field of the vector bundle structure of the tangent bundle. The tangent lift of $h_t$ is $(\dt h)_t=\sT h_t$  and we have
$$
(\dt h)_t(x^i, \dot {x}^j)=(t^{w_i}x^i, t^{w_j}\dot x^j)\,.
$$
As already mentioned, the cotangent bundle $\sT^*F$ is naturally a $\Z$-graded bundle of degree $k$,
with the $\Z$-homogeneity structure $(\dt h)^*_t=(\sT h_{t^{-1}})^*$, $t\ne 0$, which in homogeneous coordinates takes the form
$$
(h^\ast)_t(x^i, p_j)=\left(t^{w_i}x^i,t^{-w_j} p_j\right)\,.
$$
According to our conventions,
$$
(h^\ast)_0(x^i, p_j)=\left(0^{w_i}x^i,0^{-w_j} p_j\right)
$$
is a projection onto $\sT^*M$. This $\Z$-graded bundle we will denote simply $\sT^*F$.

To obtain on $\sT^*F$ a structure of a graded bundle of degree $k$ we can make a procedure of shifting the weights, known from mathematical physics. To do this, we define the \emph{$k$-th phase lift} of $h_t$ as a homogeneous structure $(\dt h)^*[k]$ defined by
$$((\dt h)^*[k])_t=t^k\cdot(\sT h_{t^{-1}})^*\quad\text{for}\quad t\ne 0\,,$$
which in local coordinates looks like
\be\label{phasel}
((\dt h)^*[k])_t(x^i, p_j)=\left(t^{w_i}x^i,t^{k-w_j} p_j\right)\,.
\ee
Since all $w_i$ and $k-w_j$ are non-negative, the latter makes sense also for $t=0$ and is smooth, so we get a genuine homogeneity structure.
The graded bundle associated with this homogeneity structure we will denote $\sT^*[k]F$.
The lifts $(\dt h)$ and $(\dt h)^*[k]$, together with the obvious vector bundle structures, define $\GrL$-bundle structures on $\sT F$ and $\sT^*[k]F$ \cite[Section 2.3]{Grabowski:2013}, \cite[Example 2.17]{Bruce:2016}.
The $\Z$-graded bundle $\sT^*F$ is canonically a $\ZGrL$-bundle.
The corresponding weight vector fields are
\beas\nabla_{\sT F}&=&\sum_i\left(w_ix^i\pa_{x^i}+w_i\dot x^i\pa_{\dot x^i}\right)\,,\\
\nabla_{\sT^* F}&=&\sum_i\left(w_ix^i\pa_{x^i}-w_ip_i\pa_{p_i}\right)\,,\\
\nabla_{\sT^*[k] F}&=&\sum_i\left(w_ix^i\pa_{x^i}+(k-w_i)p_i\pa_{p_i}\right)\,.\eeas
Of course, we can  start as well with a $\Z$-graded bundle $F$ with the same formulae for the lifts. In this case, $\sT F$ is also a $\Z$-graded bundle.

\section{Duality, sections and tensor products of $\ZGrL$-bundles}
\subsection{Duality}
Let $F$ be a $\ZGrL$-bundle of degree $k$ with the associated $\Z$-homogeneous structure $h$ (see (\ref{bu:4})) and let $F^*$ will be the dual of $F$ with respect to the vector bundle structure. In the case of a double vector bundle, we should also indicate with respect to which vector bundle structure we take the duality. For the duality on $n$-tuple vector bundles we refer to \cite{Gracia:2009,Gracia:2018,Konieczna:1999,Mackenzie:2005a}.

On $F^*$ there is a canonical $\Z$-graded bundle structure of the same degree $k$, associated with a $\Z$-homogeneous structure $h^*_t=(h_{t^{-1}})^*$, $t\ne 0$. In local coordinates
\be\label{dualcoor}\left({x^{A}},{y_{w}^{a}},p_i,p_j^s\right)\,,\ee
dual to
$$\left({x^{A}},{y_{w}^{a}},{z^{i}},{u}^{j}_{s}\right)
$$
(cf. (\ref{zgc})),
$h^*_t$ takes the form
$$ h^*_t\left({x^{A}},{y_{w}^{a}},p_i,p_j^s\right)=\left({x^{A}},t^{w}{y_{w}^{a}},p_i,t^{-s}p_j^s\right)\,.$$
\noindent It is entirely obvious that  $(F^*)^*=F$. Moreover, $\sT^*F\simeq (\sT F)^*$ not only as vector bundles but as $\ZGrL$ bundles. Using the form of actions of $h_t$ and $h^*_t$, we easily get the following.
\begin{proposition}
If $\za$ and $X$ are sections of $F\to N$ and $F^*\to N$, respectively, then
$$\la h^*_t(\za),X\ran=\la\za,h_{t^{-1}}(X)\ran\,.$$
\end{proposition}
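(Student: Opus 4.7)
The identity is, in essence, the defining property of the transpose of a linear map, applied fibrewise. I would organise the proof in three short steps.

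First, I would unpack the base-point geometry. Because $h$ is a $\ZGrL$-structure, $h_t$ preserves $N\subset F$ and commutes with the Euler vector field of the vector bundle $F\to N$, so for $t\ne 0$ the map $h_{t^{-1}}$ restricts on each vector-bundle fibre to a linear isomorphism $F_n\to F_{h_{t^{-1}}(n)}$. By the very definition $h^*_t=(h_{t^{-1}})^*$, the action $h^*_t$ is the fibrewise transpose of this map, i.e.\ a linear isomorphism $F^*_n\to F^*_{h_t(n)}$.

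Second, I would invoke the universal property of the transpose: for any linear map $\phi:V\to W$ and any $\zb\in W^*$, $v\in V$ one has $\langle \phi^*(\zb),v\rangle=\langle \zb,\phi(v)\rangle$. Applied with $\phi=h_{t^{-1}}$ (restricted to the appropriate fibre) and $\phi^*=h^*_t$, this gives $\langle h^*_t(\za),X\rangle=\langle \za,h_{t^{-1}}(X)\rangle$ at once.

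As a sanity check I would verify the identity in the dual coordinates~(\ref{dualcoor}). Using the explicit formulae
\begin{equation*}
h^*_t(x^A,y^a_w,p_i,p^s_j)=(x^A,t^wy^a_w,p_i,t^{-s}p^s_j),\qquad h_{t^{-1}}(x^A,y^a_w,z^i,u^j_s)=(x^A,t^{-w}y^a_w,z^i,t^{-s}u^j_s),
\end{equation*}
and the fibrewise pairing $p_iz^i+p^s_ju^j_s$, both sides collapse to $p_iz^i+t^{-s}p^s_ju^j_s$. The only point requiring care -- not really a genuine obstacle -- is that $h^*_t(\za)$ and $X$ sit over different points of $N$, so the two pairings in the asserted identity belong to two distinct fibres of the duality pairing; once the linear isomorphisms of the first step are taken into account, the two numbers coincide.
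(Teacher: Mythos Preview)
Your proof is correct and follows exactly the paper's approach: the paper merely states that the identity follows ``using the form of actions of $h_t$ and $h^*_t$'', and you carry out precisely that coordinate verification, supplemented by the conceptual remark that $h^*_t=(h_{t^{-1}})^*$ is by definition the fibrewise transpose of $h_{t^{-1}}$. Your reading with $\za\in\Sec(F^*)$ and $X\in\Sec(F)$ is the natural one given that $h^*_t$ acts on $F^*$; the paper's literal assignment of $\za$ to $F$ and $X$ to $F^*$ appears to be a typo.
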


Borrowing the idea from phase lifts of graded bundles (\ref{phasel}), we can define another duality for $\GrL$-bundles $(F,h,h')$ of degree $k$; this time the dual bundle $F^*$ is again a $\GrL$-bundle.
\begin{theorem} If $F$ is a $\GrL$ bundle, then the dual bundle $F^*$ is a $\GrL$-bundle, denoted $F^*[k]$, with  the homogeneity structure $h^*[k]$. This homogeneity structure is defined by $(h^*[k])_t=t^k(h_{t^{-1}})^*$,
$$(h^*[k])_t\left({x^{A}},{y_{w}^{a}},p_i,p_j^s\right)=\left({x^{A}},t^w{y_{w}^{a}},t^kp_i,t^{k-s}p_j^s\right)\,,$$
where local coordinates are as in (\ref{dualcoor}).
\end{theorem}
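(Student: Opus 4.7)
The plan is to verify three things: (i) that $h^*[k]$ is a smooth action of the multiplicative monoid $(\R,\cdot)$ on $F^*$; (ii) that it commutes with the standard dual vector bundle homogeneity structure on $F^*$, so that together they give a double graded bundle; and (iii) that the local coordinate formula is as stated. Once the coordinate formula is pinned down, both smoothness at $t=0$ and the $\GrL$-property become easy consequences, so (iii) is the heart of the matter.

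For $t, t' \ne 0$, the monoid identity $(h^*[k])_t \circ (h^*[k])_{t'} = (h^*[k])_{tt'}$ follows from the fact that $(h_{t^{-1}})^*$ is fiberwise linear on $F^* \to N$ (which lets the scalar $t'^k$ pass through) combined with the monoid action property of $h$ on $F$:
$t^k (h_{t^{-1}})^* \circ t'^k (h_{t'^{-1}})^* = (tt')^k (h_{(tt')^{-1}})^*$.

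To obtain the coordinate formula, I would read off from (\ref{zgc}) that $h$ acts on $F$ by $h_t(x^A, y_w^a, z^i, u_s^j) = (x^A, t^w y_w^a, z^i, t^s u_s^j)$, so that fiberwise $h_t : F_n \to F_{h_t n}$ is linear with diagonal matrix $\mathrm{diag}(1, t^s)$ in the basis dual to $(z^i, u_s^j)$. Dualizing and replacing $t$ by $t^{-1}$ shows that $(h_{t^{-1}})^*$ acts as $h_t|_N$ on the base coordinates $(x^A, y_w^a)$ and as $p_i \mapsto p_i$, $p_j^s \mapsto t^{-s} p_j^s$ on the linear fiber coordinates. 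Multiplying by the scalar $t^k$ in fibers of $F^* \to N$ then multiplies $p_i$ and $p_j^s$ by $t^k$, producing the formula in the theorem.

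Finally, because $F$ is a $\GrL$-bundle of degree $k$, the weights in (\ref{zgc}) satisfy $0 \le w \le k$ and $0 \le s \le k$, so all exponents $w$, $k$, $k-s$ appearing in the coordinate formula are non-negative integers. Under the convention $0^0 = 1$, $0^r = 0$ for $r>0$ (as in (\ref{ma})), the coordinate map is polynomial in $t$ and extends smoothly to $t=0$; hence $h^*[k]$ is a genuine $\N$-homogeneity structure of degree $k$. Commutativity with the dual vector bundle structure $h'^*_{t'}(x^A, y_w^a, p_i, p_j^s) = (x^A, y_w^a, t' p_i, t' p_j^s)$ is a one-line check in coordinates, since both compositions give $(x^A, t^w y_w^a, t' t^k p_i, t' t^{k-s} p_j^s)$, yielding the required $\GrL$-bundle structure on $F^*[k]$. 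The only delicate point is the smooth extension to $t=0$, where the non-negativity of all exponents, forced by the $\GrL$ hypothesis on $F$, is essential.
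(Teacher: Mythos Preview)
Your proof is correct and follows essentially the same line as the paper's: verify the monoid law, read off the exponents from the coordinate formula, and note they are all non-negative so the action extends smoothly to $t=0$. You simply flesh out details the paper leaves implicit (the derivation of the coordinate formula from duality and the explicit check that $h^*[k]$ commutes with the dual Euler action), and your only imprecision is calling the resulting homogeneity structure ``of degree $k$'' --- as the paper remarks immediately after the theorem, the degree of $F^*[k]$ is in general only $\le k$.
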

\begin{proof}
It is easy to see that $(h^*[k])_t\circ (h^*[k])_{t'}=(h^*[k])_{tt'}$. Moreover, coordinates ${x^{A}},{y_{w}^{a}},p_i,p_j^s$ are of degrees $0,w,k,k-s$, respectively, and all these degrees are $\ge 0$.

\end{proof}
\begin{remark} Note that in general, the degree of $F^*[k]$ is only $\le k$. For instance, if with respect to the graded bundle structure of degree $k$, $F$ has only coordinates of degree $2,k$, $k>2$, then $F^*$ has coordinates of degree $(-2,-k)$, and $F^*[k]$ has coordinates of degree $(k-2,0)$, so is of degree $k-2$. However, we still have $(F^*[k])^*[k]=F$.
\end{remark}
\subsection{The degree of sections}
Let $F$ be a $\ZGrL$-bundle (\ref{bu:4}) with bi-homogeneous coordinates (\ref{zgc}).
\begin{definition} We say that a section $\zs:N\to F$,
$$\zs(x^A,{y_{w}^{a}})=\left(x^A,{y_{w}^{a}},z^i(x^A,{y_{w}^{a}}),{u}^{j}_{s}(x^A,{y_{w}^{a}})\right)$$
of the vector bundle structure \emph{is of degree $\zl\in\R$} if
\be\label{section} h_t\left(\zs(h_{t^{-1}}(x^A,{y_{w}^{a}}))\right)=t^{-\zl}\zs(x^A,{y_{w}^{a}})\ee
for $t> 0$.
\end{definition}
\begin{example}
The vector field $\pa_{x^i}$ on a $\Z$-graded bundle $F$ is of degree $-w_i$ as a section of the $\ZGrL$-bundle $\sT F$.
\end{example}
\noindent Note that if $F$ is a $\GrL$-bundle, the degrees of sections can be only integer numbers.

\medskip\noindent
For a section $\zs$ of the vector bundle structure $\zt:F\to N$, we denote with $\zi(\zs)$ the linear function on $F^*$ which reads
$\zi(\zs)(e^*_x)=\la\zs(x),e^*_x\ran$. In local homogeneous coordinates $(x,y)$ on $F$ and the dual coordinates $(x,p)$ on $F^*$, for $\zs(x)=(x^i,\zs^a(x))$, we have
$$\zi(\zs)(x,p)=\sum_ap_a\cdot\zs^a(x)\,.$$
The section $\zs$ is uniquely determined by the submanifold $\zs(N)$ of $F$. Conversely, any submanifold $S$ of $F$ which is mapped diffeomorphically on $N$ by the vector bundle projection $h'_0:F\to N$ is the image of a section. For $t\ne 0$ we denote by $h_t(\zs)$ the section $\zs'$ of $F\to N$ corresponding to the submanifold $h_t(\zs(N))$.
\begin{theorem}\label{sections1}
Suppose $F$ is a $\ZGrL$-bundle. Then, a section $\zs:N\to F$ of the vector bundle structure is of degree $\zl\in\Z$ if and only if $\zi(\zs)$ is a function of degree $\zl$ on the $\ZGrL$-bundle $F^*$. This is equivalent to the identity $h_t(\zs(x))=t^{-\zl}\zs(h_t(x))$, i.e.
$$h_t(\zs)=t^{-\zl}\zs.$$
\end{theorem}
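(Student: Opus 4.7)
The plan is to prove the theorem by showing three equivalent formulations: the original degree condition (\ref{section}), the pointwise identity $h_t(\zs(x))=t^{-\zl}\zs(h_t(x))$, and homogeneity of the fibrewise-linear function $\zi(\zs)$ on $F^*$. I shall work with $t>0$ throughout; the $\Z$-graded (non-polynomial) case is then settled because homogeneity for $t>0$ determines the section and the function completely on the open dense subset where $h_t$ acts invertibly.

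First, the equivalence between (\ref{section}) and $h_t\circ\zs = t^{-\zl}\,\zs\circ h_t$ is a pure substitution: in (\ref{section}) replace the variable $(x^A,y^a_w)$ by $h_t(p)$, and use that $h_{t^{-1}}\circ h_t=\mathrm{id}$ for $t>0$. This also matches the interpretation of $h_t(\zs)$ from the paragraph preceding the theorem, since the section whose image is $h_t(\zs(N))$ satisfies $(h_t(\zs))(h_t(x))=h_t(\zs(x))$, so $h_t(\zs)=t^{-\zl}\zs$ reads exactly $h_t(\zs(x))=t^{-\zl}\zs(h_t(x))$.

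The substantive step is the equivalence with homogeneity of $\zi(\zs)$. Here I shall use the duality identity from the proposition preceding the theorem, namely $\la h^*_t(\za),X\ran=\la\za,h_{t^{-1}}(X)\ran$, rewritten as $\la X,h^*_t(\za)\ran=\la h_{t^{-1}}(X),\za\ran$. For $\za\in F^*_x$, compute directly
\[
\zi(\zs)\bigl(h^*_t(\za)\bigr)
=\bigl\la\zs(h_t(x)),\,h^*_t(\za)\bigr\ran
=\bigl\la h_{t^{-1}}\bigl(\zs(h_t(x))\bigr),\,\za\bigr\ran.
\]
If $\zs$ satisfies $h_t(\zs(x))=t^{-\zl}\zs(h_t(x))$, applying $h_{t^{-1}}$ to both sides gives $h_{t^{-1}}(\zs(h_t(x)))=t^{\zl}\zs(x)$, so the right-hand side becomes $t^{\zl}\la\zs(x),\za\ran=t^{\zl}\zi(\zs)(\za)$, proving $\zi(\zs)\circ h^*_t=t^{\zl}\zi(\zs)$, i.e.\ $\zi(\zs)$ has degree $\zl$ on $F^*$.

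Conversely, assume $\zi(\zs)$ has degree $\zl$. The computation above yields $\la h_{t^{-1}}(\zs(h_t(x))),\za\ran=t^{\zl}\la\zs(x),\za\ran$ for every $\za\in F^*_x$. Since the pairing between $F_x$ and $F^*_x$ is non-degenerate, this forces $h_{t^{-1}}(\zs(h_t(x)))=t^{\zl}\zs(x)$, equivalently $\zs(h_t(x))=t^{\zl}h_t(\zs(x))$, which is the degree-$\zl$ condition. No step is genuinely hard; the only point requiring care is that the $\Z$-homogeneity structure $h^*_t$ on $F^*$ is only defined for $t\ne 0$, but this is precisely the range for which the degree condition (\ref{section}) is stated, so the three conditions live on the same domain and the argument closes without further issue.
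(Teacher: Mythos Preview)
Your proof is correct and takes a genuinely different route from the paper's. The paper argues entirely in bi-homogeneous coordinates: it writes $\zs$ as $(x^A,y^a_w,z^i(x,y),u^j_s(x,y))$, computes $h_t(\zs(h_{t^{-1}}(\cdot)))$ explicitly, reads off that condition (\ref{section}) forces $z^i$ and $u^j_s$ to be homogeneous of degrees $\zl$ and $\zl+s$ respectively, and then observes that this is precisely what makes $\zi(\zs)=\sum p_iz^i+\sum p^s_ju^j_s$ homogeneous of degree $\zl$ on $F^*$ (recalling $\deg p_i=0$, $\deg p^s_j=-s$); the $h_t(\zs)$ statement is then checked by the same coordinate formula.

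Your argument is coordinate-free: you invoke the pairing identity $\la h^*_t(\za),X\ran=\la\za,h_{t^{-1}}(X)\ran$ from the preceding proposition, compute $\zi(\zs)\circ h^*_t$ at a covector $\za$ directly, and close by non-degeneracy of the fibrewise pairing. This is cleaner and makes the statement transparently a duality fact, at the cost of relying on that earlier proposition and on the (tacit but true) linearity of $h_t$ on fibres of $F\to N$. The paper's coordinate proof, while more pedestrian, has the virtue of exhibiting the precise homogeneity degrees of the component functions of $\zs$, which is useful elsewhere in the paper (e.g.\ in the tensor-product discussion).
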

\begin{proof}
One can directly compute that, for $t\ne 0$,
\be\label{homsection} h_t\left(\zs(h_{t^{-1}}(x^A,{y_{w}^{a}}))\right)=\left(x^A,{y_{w}^{a}},z^i\circ h_{t^{-1}}(x^A,{y_{w}^{a}}),t^s\cdot {u}^{j}_{s}\circ h_{t^{-1}}(x^A,{y_{w}^{a}})\right)\,.\ee
Equality (\ref{section}) means
$$z^i\circ h_{t^{-1}}(x^A,{y_{w}^{a}})=t^{-\zl}z^i(x^A,{y_{w}^{a}})\quad\text{and}\quad
t^s\cdot {u}^{j}_{s}\circ h_{t^{-1}}(x^A,{y_{w}^{a}})=t^{-\zl}{u}^{j}_{s}(x^A,{y_{w}^{a}})\,.$$
This is equivalent to the statement that $z^i(x^A,{y_{w}^{a}})$ and ${u}^{j}_{s}(x^A,{y_{w}^{a}})$ are functions on $N$ of degrees $\zl$ and $\zl+s$, respectively. But this in turn is equivalent to the fact that
$$\zi(\zs)\left({x^{A}},{y_{w}^{a}},p_i,p_j^s\right)=\sum_ip_i\cdot z^i(x^A,{y_{w}^{a}})+\sum_jp_j^s\cdot{u}^{j}_{s}(x^A,{y_{w}^{a}})$$
is of degree $\zl$ on $F^*$.
Further,
$$h_t(\zs)(x^A,{y_{w}^{a}})=\left(x^A,{y_{w}^{a}},z^i\circ h_{t^{-1}}(x^A,{y_{w}^{a}}),t^s\cdot {u}^{j}_{s}\circ h_{t^{-1}}(x^A,{y_{w}^{a}})\right)$$
which is exactly the right hand of (\ref{homsection}) and leads to the same degree of homogeneity of $\zs$.

\end{proof}
\begin{definition}
We say that a linear map $\zF:\Sec(F_1)\to\Sec(F_2)$ between the vector bundle sections of $\ZGrL$-bundles $F_1$ and $F_2$ is of degree $\zl$ if for a vector bundle section $\zs$ of $F_1$ with degree $w$, $\zF(\zs)$ is a vector bundle section of $F_2$ with degree $\zl+w$
\end{definition}

\begin{example}
Let $F=N\ti V$ be a $\ZGrL$-bundle with the trivial vector bundle structure. Then, the $\Z$-homogeneity structure $h$ on $F$ splits into the product of $\Z$-graded bundles with $\Z$-homogeneity structures $h^N$ and $h^V$ on $N$ and $V$, respectively. A section $\zs:N\to F$ is of degree $\zl$ if and only if the corresponding map $\zs^V:N\to V$ between graded bundles is of degree $\zl$.
\end{example}

\begin{example}
Let $F$ be a $\Z$-graded bundle and $\zW^l(F)$ be the space of differential $l$-forms on $F$ as  sections of $\we^l\sT^*F$. Then, de Rham differential
$$\xd:\zW^l(F)\to\zW^{l+1}(F)$$
is of degree $0$.
\end{example}
\begin{example}
Let $X$ be a vector field of degree $\zl$ on the graded bundle $F$. Then the  contraction
$$i_X:\zW^l(F)\to\zW^{l-1}(F)$$
is of degree $\zl$.
\end{example}

\noindent Using Theorem \ref{sections1}, we easily get the following Theorem.
\begin{theorem}
Suppose $F$ is a $\ZGrL$-bundle of degree $k$ with a $\Z$-homogeneity structure $h$.
\begin{itemize}
\item Then, a section $\zs:N\to F$ of the vector bundle structure is of degree $\zl\in\R$ if and only if $\zi(\zs)$ is a function of degree $\zl+k$ on the $\ZGrL$-bundle $F^*[k]$.
\item The pairing $\la\cdot,\cdot\ran:F\ti_N F^*[k]\to \R$, where $\R$ is a graded bundle with the trivial homogeneity structure, is a map of degree $k$.
\end{itemize}
\end{theorem}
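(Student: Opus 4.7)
The proof reduces both statements to Theorem~\ref{sections1} combined with a one-line bookkeeping of the phase lift $h^*[k]$. Recall that in local coordinates $(h^*[k])_t$ acts exactly as $h^*_t$ on the base coordinates $({x^{A}}, {y_{w}^{a}})$ of $F^*$ but multiplies every dual fibre coordinate $p_i,\,p_j^s$ by the additional factor $t^k$; equivalently $(h^*[k])_t = t^k\cdot h^*_t$, where $t^k$ denotes scalar multiplication in the dual vector bundle $F^*\to N$.

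For the first item, I would combine this observation with the fact that $\zi(\zs)$ is linear in the dual fibre coordinates. If $\zs$ has degree $\zl$, then by Theorem~\ref{sections1} one has $\zi(\zs)\circ h^*_t = t^\zl\zi(\zs)$. Pulling back instead by $(h^*[k])_t = t^k\cdot h^*_t$ and using linearity in the fibre yields $\zi(\zs)\circ (h^*[k])_t = t^k\cdot(\zi(\zs)\circ h^*_t) = t^{k+\zl}\zi(\zs)$, so $\zi(\zs)$ is homogeneous of degree $\zl+k$ on $F^*[k]$. The converse is immediate by reversing this step. Equivalently, the calculation can be done in coordinates: the proof of Theorem~\ref{sections1} already gives that the components $z^i$ and $u^j_s$ of $\zs$ have degrees $\zl$ and $\zl+s$ on $N$, respectively, so substituting in $\zi(\zs) = \sum_i p_i z^i + \sum_j p_j^s u^j_s$ and applying $(h^*[k])_t$ produces the factor $t^{\zl+k}$ in each summand.

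For the second item, the pairing is essentially the same calculation. Starting from the identity $\la h_t(e), h^*_t(\za)\ran = \la e, \za\ran$ (which is just the definition $h^*_t = (h_{t^{-1}})^*$), one finds $\la h_t(e),(h^*[k])_t(\za)\ran = \la h_t(e), t^k\cdot h^*_t(\za)\ran = t^k\la h_t(e),h^*_t(\za)\ran = t^k\la e,\za\ran$. Hence, viewed as a function on the graded bundle $F\ti_N F^*[k]$ equipped with the joint homogeneity structure $h_t\ti (h^*[k])_t$, the pairing is homogeneous of degree $k$. Since $\R$ carries the trivial homogeneity structure, this is precisely what is meant by a map of degree $k$ in the sense of the earlier definition.

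No step here is genuinely difficult; the only care required is to track which weight belongs to which coordinate and to note that the extra $t^k$ introduced by the phase lift appears only on the dual fibre coordinates, not on those inherited from $N$. The two statements are in fact two facets of the same observation, namely that the phase lift restores degree-$0$ invariance of the canonical pairing after a uniform shift by $k$ on the dual-fibre weights, transferring that shift into the degrees of homogeneous sections and of the pairing itself.
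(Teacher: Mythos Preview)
Your proof is correct and follows exactly the approach the paper indicates: the paper gives no explicit argument beyond ``Using Theorem~\ref{sections1}, we easily get the following Theorem,'' and your reduction to Theorem~\ref{sections1} via the identity $(h^*[k])_t = t^k\cdot h^*_t$ together with fibre-linearity of $\zi(\zs)$ is precisely what is meant. The coordinate version you sketch in the parenthetical is equally valid and mirrors the computation in the proof of Theorem~\ref{sections1} itself.
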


\subsection{Tensor products}
Affine coordinates $(x^i,y^j)$ on a vector bundle $E\to M$ are associated with local coordinates on $M$ and a local basis $\{ e_j\}$ of sections of $E$. The correspondence between the basis and linear coordinates $(y^j)$ is given by
$$y^j\left(\sum_la^l\,e_l(x)\right)=a^j\,.$$ This works also for the vector bundle structure of a $\ZGrL$-bundle $F$.
As we can take the coordinates $y^l$ bi-homogeneous of the graded degree $w_l$, the sections $e_l$ are homogeneous of degree $-w_l$. Indeed, for $t\ne 0$,
\beas t^{w_j}a^j&=&t^{w_j}y^j\left(\sum_la^le_l(x)\right)=y^j\circ h_t\left(\sum_la^le_l(x)\right)\\
&=&y^j_s\left(\sum_la^l\left(h_t(e_l(x))\right)\right)=
y^j\left(\sum_la^l\,h_t(e_l)(h_t(x))\right)\,.\eeas
This implies that $h_t(e_l)=t^{w_l}(e_l)$, thus $e_l$ is of degree $-w_l$  (Theorem \ref{sections1}).

Now consider two $\ZGrL$-bundles: $F_1$ of degree $k_1$ and $F_2$ of degree $k_2$. Let us assume that both vector bundles are over the same manifold $N$, and that the restrictions of $h^1_t$ and $h^2_t$ to $N$ are equal. The bases of the $\Z$-graded bundle structure may be different, $M_1$ and $M_2$, respectively. Let $(x^i_s,y^j_s)$ be affine coordinates on the vector bundle $F_s\to N$, associated with a local basis of section $\{ e^s_l\}$ of $F_s\to N_s$, $s=1,2$.

\medskip
Consider the tensor product $F_1\ot_NF_2$ of these vector bundles. We can take a local basis of sections of this tensor product of the form $\{ e^1_j\ot e^2_l\}$, and the corresponding linear coordinates in $F_1\ot_NF_2$ we will denote $y^j_1\ot y^l_2$. Put $h^\ot_t:F_1\ot_NF_2\to F_1\ot_NF_2$ of the form
$$h^\ot_t(e^1_j(x)\ot e^2_l(x))=\left(h_t^1(e^1_j(x))\ot h_t^2(e^2_l(x))\right)\,,\quad t\in\R\,.$$
The tensor product on the right hand side makes sense, as both vectors $h_t^1(e^1_j(x))$ and $h_t^2(e^2_l(x))$ have the same initial point $h^1_t(x)=h^2_t(x)$. It is easy to see that $h^\ot_t$ is a linear map, so it is compatible with the vector bundle structure on the tensor product and an action of the monoid $(\R,\cdot)$. This means that $F_1\ot_NF_2$ is a $\ZGrL$-bundle. To see the degrees of coordinates $y_1^a\ot y^b_2$, consider
\beas &(y^a_1\ot y_2^b)\circ h^\ot_t\left(\sum_{j,l} d^{jl}(e^1_j(x)\ot e^2_l(x))\right)=(y^a_1\ot y_2^b)
\left(\sum_{j,l} d^{jl}\left(h^1_t(e^1_j(x))\ot h^2_t(e^2_l(x))\right)\right)=\\
& (y^a_1\ot y_2^b)\left(\sum_{j,l} d^{jl}\left(t^{w_j^1}e^1_j(h^1_t(x))\right)\ot \left(t^{w_l^2}e^2_l(h^2_t(x))\right)\right)=t^{w_a^1+w_b^2}d^{ab}\,,\eeas
which shows that
$$\deg(y_1^a\ot y_2^b)=w_a^1+w_b^2$$
and sections $e^1_a\ot e^2_b$ are of degree $-(w_a^1+w_b^2)$.
The degree of $x^i$ is the same as the degree of $x^i$ on $F_1$ (or $F_2$).
In particular, $F_1\ot_NF_2$ is a graded bundle if $F_1$ and $F_2$ are graded bundles and its degree is $\le k_1+k_2$.
\begin{example}
Let $F$ be a $\Z$-graded bundle of degree $k$ over $M$ with homogeneous coordinates $(x^i)$, so that $\sT^*F$ and $\sT F$ are canonically $\ZGrL$-bundles with homogeneous coordinates $(x^i,\dot x^j)$ and $(x^i,p_j)$. Note that $\dot x^j$ is of degree $w_j$ and $p_j$ is of degree $-w_j$, so $f(x)\pa_{x^j}$ is of degree $\deg(f)-w_j$ and $\deg(f(x)\xd x^i)$ is $\deg(f)+w_j$. All tensor products of $\sT F$ and $\sT^*F$ are $\ZGrL$-bundles. One can easily check that the degree of a tensor field $K$ on $F$ coincides with the degree of $K$ viewed as a section of the corresponding tensor product of $\ZGrL$-bundles $\sT^*F$ and $\sT F$.
\end{example}

Suppose we have a $q$-contravariant and $p$-covariant tensor field $K$ on $F$, $K\in\mathscr{T}^q_p(F)$, of degree $\zl_K$ and of the form
$$K=f_K(x)\pa_{x^{i_1}}\ot\cdots\ot\pa_{x^{i_q}}\ot\xd x^{j_1}\ot\cdots\ot\xd x^{j_p}\,.$$
Let us take contravariant and covariant tensor fields $X\in\mathscr{T}^l_0(F)$ and $\zw\in\mathscr{T}^0_u(F)$, $u\le q$ and $l\le p$, of degrees $\zl_X$ and $\zl_\zw$.
$$X=f_X(x)\pa_{x^{a_1}}\ot\cdots\ot\pa_{x^{a_l}}\quad\text{and}\quad\zw=f_\zw(x)\xd x^{b_1}\ot\cdots\ot\xd x^{b_u}\,.$$
We define the insertion maps $i_XK$ and $i_\zw K$ as follows:
$$i_XK=f_K(x)f_X(x)\zd^{a_1}_{j_1}\cdots\zd^{a_l}_{j_l}\pa_{x^{i_1}}\ot\cdots\ot\pa_{x^{i_q}}\ot\xd x^{j_{l+1}}\ot\cdots\ot\xd x^{j_p}$$
and
$$i_\zw K=f_K(x)f_\zw(x)\zd^{b_1}_{i_1}\cdots\zd^{b_u}_{i_u}\pa_{x^{i_{u+1}}}\ot\cdots\ot\pa_{x^{i_q}}\ot\xd x^{j_{1}}\ot\cdots\ot\xd x^{j_p}\,.$$
This defines linear maps $i_X:\mathscr{T}^q_p(F)\to\mathscr{T}^q_{p-l}(F)$ and $i_\zw:\mathscr{T}^q_p(F)\to\mathscr{T}^{q-u}_{p}(F)$.
\begin{proposition}
The map $i_X$ is of degree $\zl_X$ and $i_\zw$ is of degree $\zl_\zw$.
\end{proposition}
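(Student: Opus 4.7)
The plan is to reduce to the case of a decomposable tensor and then simply add up the weights of the factors. By linearity it suffices to verify the claim on monomials of the form displayed above, since every homogeneous tensor is a (locally finite) sum of homogeneous decomposable ones, and the insertion maps $i_X$ and $i_\zw$ are $C^\infty(F)$-linear in $K$ after fixing $X$ or $\zw$. The two key ingredients are the already established degrees of the coordinate basis: $\pa_{x^j}$ has degree $-w_j$ as a section of $\sT F$, and $\xd x^j$ has degree $w_j$ as a section of $\sT^*F$; combined with the fact that, on the tensor product $\ZGrL$-bundles, degrees of decomposable sections add, together with the weight of the scalar coefficient.

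First I would write out
\[
\zl_K=\deg(f_K)-w_{i_1}-\cdots-w_{i_q}+w_{j_1}+\cdots+w_{j_p},\qquad
\zl_X=\deg(f_X)-w_{a_1}-\cdots-w_{a_l},
\]
and similarly
\[
\zl_\zw=\deg(f_\zw)+w_{b_1}+\cdots+w_{b_u},
\]
using the tensor-product result of the previous subsection (the degree of a section of $\sT^{\otimes q}F\otimes (\sT^*F)^{\otimes p}$ is the sum of the degrees of the factors plus the weight of the coefficient function).

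Next I would compute the degrees of $i_XK$ and $i_\zw K$ term by term. The Kronecker deltas $\zd^{a_1}_{j_1}\cdots\zd^{a_l}_{j_l}$ force $w_{a_s}=w_{j_s}$ for $s=1,\dots,l$ on the surviving summands, hence
\[
\deg(i_XK)=\deg(f_Kf_X)-w_{i_1}-\cdots-w_{i_q}+w_{j_{l+1}}+\cdots+w_{j_p}
=\zl_K+\zl_X,
\]
since the contracted weights $w_{j_1}+\cdots+w_{j_l}$ cancel exactly with $w_{a_1}+\cdots+w_{a_l}$. The same cancellation, now between $w_{b_1}+\cdots+w_{b_u}$ and $w_{i_1}+\cdots+w_{i_u}$, gives $\deg(i_\zw K)=\zl_K+\zl_\zw$.

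Therefore $i_X$ sends tensors of degree $\zl_K$ to tensors of degree $\zl_K+\zl_X$, and $i_\zw$ sends tensors of degree $\zl_K$ to tensors of degree $\zl_K+\zl_\zw$, which is precisely the assertion. I do not foresee a real obstacle: the only subtlety is to remember that degrees are additive under the tensor product of $\ZGrL$-bundles (established in the previous subsection), so the argument is a purely bookkeeping verification on monomials followed by $C^\infty(F)$-linear extension.
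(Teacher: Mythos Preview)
Your proof is correct and follows essentially the same approach as the paper: both compute $\deg(i_XK)-\deg(K)$ and $\deg(i_\zw K)-\deg(K)$ directly by bookkeeping the weights of the coordinate factors in the monomial expressions. The paper's version is terser (it writes the two degree differences in one line each), while you make the role of the Kronecker deltas in forcing $w_{a_s}=w_{j_s}$ (resp.\ $w_{b_s}=w_{i_s}$) explicit; otherwise the arguments coincide.
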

\begin{proof}
We calculate $\deg(i_XK)-\deg(K)$ and $\deg(i_\zw K)-\deg(K)$:
\beas \deg(i_XK)-\deg(K)&=&\deg(f_X)-w_{j_{1}}-\cdots -w_{j_l}=\deg(X)\,,\\
\deg(i_\zw K)-\deg(K)&=&\deg(f_\zw)+w_{i_{1}}+\cdots+w_{i_u}=\deg(\zw)\,.
\eeas

\end{proof}
\noindent Since symmetrization or skew-symmetrization do not change the degree of a tensor, the above proposition is valid also
for symmetric or anti-symmetric tensors.

In the following, we will focus our attention on graded bundles, although most of the concepts and results can be formulated for $\ZGrL$-bundles as well.

\section{Higher lifts of tensor fields and distributions}
In this section, we will try to understand the compatibility of a homogeneity structure with other geometric structures, such as a general tensor or a distribution.
\begin{definition} Structures on a graded bundle $F$ which are compatible with the homogeneity structure we will call \emph{weighted structures}, e.g. weighted Poisson structures or weighted Nijenhuis tensors. If $F$ is a vector bundle (graded bundle of degree 1), then weighted structures on $F$ we will call $\VB$-structures. Indeed, $\VB$-groupoids and $\VB$-algebroids (\cite{Bursztyn:2016,Drummond:2015,Gracia:2010,Gracia:2017,Mackenzie:1998,Mackenzie:2000,Mackenzie:2005a, Mackenzie:2011,Pradines:1988}) are natural examples of $\VB$-structures in our sense.
\end{definition}

As one would expect, the main question is the meaning of compatibility.
Instead of proposing an \emph{ad hoc} definition, we will try to make an intelligent guess taking as examples
tensor fields that are canonical in some sense.
An example of a canonical homogeneity structure is the one on higher tangent bundles  $$\zt^r_M:\sT^rM=\sJ^r_0(\R,M)\to M\,.$$ There is a huge list of various concepts of lifting tensor fields and other geometric structures from $M$ to $\sT^rM$ (e.g. \cite{Gancarzewicz:1990,Grabowski:2014,Grabowski:2012,Kolar:1996a,Wamba:2012,Morimoto:1969,Morimoto:1970,Morimoto:1970a, Popescu:2010,Salimov:2019,Yano:1968}), starting from the complete tangent lifts  \cite{Grabowski:1995,Grabowski:1997,Ishihara:1973,Wamba:2014,Wamba:2011,Ozkan:2012,Yano:1966,Yano:1966a, Yano:1967,Yano:1967a,Yano:1973}.
We will use mainly \cite{Morimoto:1970c} and \cite{Wamba:2012}, where the descriptions of lifts (prolongations) are the same although based on different concepts.

Our assumption is that the complete lifts of tensor fields and distributions from $M$ to $\sT^rM$ should form structures compatible with the canonical homogeneity structure on $\sT^rM$.
Let us fix a non-negative integer $r$ for the rest of the section. We will construct lifts of tensors from a manifold $M$ to $\sT^rM$.
\begin{definition}[\cite{Wamba:2012,Morimoto:1970c}] Let $f\in C^\infty(M)$ and $\zl$ be a non-negative integer not bigger than $r$. Then, $\zl$-lift of $f$ is the function $L_\zl(f)=f^{(\zl)}$ on $\sT^rM$ defined by
$$f^{(\zl)}([\phi]_r)=\frac{1}{\zl!}\left[\frac{\xd^{\zl}(f\circ\phi)}{\xd\,t^{\zl}}\right]_{t=0}\,,$$
for $[\phi]_r\in\sT^rM$, where $\phi:\R\to M$ is a smooth curve. We put by convention $f^{(\zl)}=0$ for $\zl<0$.
\end{definition}
One can see \cite{Morimoto:1970c} that $\zl$-lifting $L_\zl:C^\infty(M)\to C^\infty(\sT^rM)$ is linear and generalized Leibniz rule
$$(f\cdot g)^{(\zl)}=\sum_{\zm=0}^\zl f^{(\zm)}\cdot g^{(\zl-\zm)}$$ is satisfied
for all $f,g\in C^\infty(M)$. Moreover, for local coordinates $x^1,\dots,x^n$ on $M$ we have $(x^i)^{(\zl)}=x^i_\zl$, where $(x^i,x^j_\zn)$, $\zn=1,\dots,r$, are the induced coordinates on $\sT^rM$.
The $\zl$-lifts of one-forms $\zw\in\zW^1(M)$ and vector fields $X\in\mathfrak{X}(M)$ are defined as follows.
\begin{theorem}\label{t6}
\
\begin{itemize}
\item
There exists one and only one $\R$-linear lift $L_\zl:\zW^1(M)\to\zW^1(\sT^rM)$ such that
$$L_\zl(f\cdot \rmd g)=(f\cdot \rmd g)^{(\zl)}:=\sum_{\zm=0}^\zl f^{(\zm)}\rmd g^{(\zl-\zm)}\,.$$
In particular, $(\rmd x^i)^{(\zl)}=\rmd x^i_\zl$.
\item There exists one and only one $\R$-linear lift $L_\zl:\mathfrak{X}(M)\to\mathfrak{X}(\sT^rM)$ such that
for $L_\zl(X)=X^{(\zl)}$ we have
$$X^{(\zl)}f^{(\zm)}=(Xf)^{(\zl+\zm-r)}\,.$$
In particular, $(\pa_{x^i})^{(\zl)}=\pa_{x^i_{r-\zl}}$.
\end{itemize}
\end{theorem}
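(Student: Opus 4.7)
The plan is to handle both parts with the same strategy: uniqueness will be read off from the fact that $\sT^rM$ has a distinguished system of coordinates coming from lifts of functions on $M$, while existence will require a local construction together with a check of coordinate independence.

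For part (i), I would first observe that every one-form can locally be written as $\sum_i f_i\, dg_i$, so the prescribed formula $L_\lambda(f\,dg) = \sum_{\mu=0}^{\lambda} f^{(\mu)}\,dg^{(\lambda-\mu)}$ together with $\R$-linearity determines $L_\lambda$ completely. For existence, I would define locally
\[
L_\lambda\Bigl(\sum_i a_i\, dx^i\Bigr) := \sum_i \sum_{\mu=0}^{\lambda} a_i^{(\mu)}\, dx^i_{\lambda-\mu},
\]
and carry out two verifications. First, the prescribed formula on $f\cdot dg$ must be recovered: expanding $dg=\sum_i(\partial g/\partial x^i)\,dx^i$ and applying the definition, one reassembles $\sum_\mu f^{(\mu)}\,dg^{(\lambda-\mu)}$ by invoking the generalised Leibniz rule $(fg)^{(\lambda)} = \sum_\mu f^{(\mu)} g^{(\lambda-\mu)}$ for function lifts. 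Second, coordinate invariance: under $y^j=y^j(x)$ one has $dy^j=\sum_i(\partial y^j/\partial x^i)\,dx^i$, and the same Leibniz identity combined with $L_\lambda(dy^j)=d(y^j)^{(\lambda)}=dy^j_\lambda$ shows that the two local prescriptions agree on overlaps.

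For part (ii), the uniqueness follows from the fact that $\{x^i_\mu : i=1,\dots,n,\ \mu=0,\dots,r\}$ is a complete coordinate system on $\sT^rM$ and that each $x^i_\mu$ equals $(x^i)^{(\mu)}$. The prescription $X^{(\lambda)} f^{(\mu)} = (Xf)^{(\lambda+\mu-r)}$, applied to the coordinate functions $f=x^j$, yields
\[
X^{(\lambda)} x^j_\mu = (X^j)^{(\lambda+\mu-r)},
\]
with $X^j := X(x^j)$, where the right-hand side vanishes when $\lambda+\mu-r<0$. For existence, I would define $X^{(\lambda)}$ in a chart as the unique derivation with these components, which after re-indexing reads
\[
X^{(\lambda)} = \sum_j \sum_{\alpha=0}^{\lambda} (X^j)^{(\alpha)}\, \partial_{x^j_{r-\lambda+\alpha}};
\]
specialising to $X=\partial_{x^i}$ recovers $(\partial_{x^i})^{(\lambda)} = \partial_{x^i_{r-\lambda}}$. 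The relation $X^{(\lambda)} f^{(\mu)} = (Xf)^{(\lambda+\mu-r)}$ for an arbitrary $f\in C^\infty(M)$ would then be checked by expressing $f^{(\mu)}$ as a polynomial in the $x^j_\nu$ via the chain rule and applying the generalised Leibniz rule for lifts.

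The main obstacle in both parts is the same: proving that the local coordinate constructions glue to well-defined global objects on $\sT^rM$. The quickest algebraic route is to chase through the identities coming from the generalised Leibniz rule for lifts of products, which encode precisely how higher jets transform under change of coordinates. A more conceptual alternative is to exploit the functoriality of the $r$-th tangent bundle: any diffeomorphism $\varphi$ of $M$ induces a diffeomorphism $\sT^r\varphi$ of $\sT^rM$, and since the lifts $f^{(\mu)}$, $L_\lambda$ and $X^{(\lambda)}$ are defined by intrinsic curve-based operations, they commute with $\sT^r\varphi$, forcing the chart-wise constructions to coincide on overlaps.
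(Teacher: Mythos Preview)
The paper does not actually supply a proof of this theorem: it is stated without proof and attributed to Morimoto \cite{Morimoto:1970c}. Your proposal is therefore not competing against any argument in the paper itself.

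That said, your outline is correct and is essentially the standard route (and, as far as one can tell from the surrounding material, the one Morimoto takes). The key ingredients you identify---the generalised Leibniz rule $(fg)^{(\lambda)}=\sum_\mu f^{(\mu)}g^{(\lambda-\mu)}$, the fact that $(x^i)^{(\mu)}=x^i_\mu$ form a full coordinate system on $\sT^rM$, and the chain-rule expansion of $f^{(\mu)}$ in those coordinates---are exactly what is needed, and your local formulae for $L_\lambda(\sum a_i\,dx^i)$ and for $X^{(\lambda)}$ are the right ones. The functoriality remark at the end is a clean way to handle gluing and would shorten the coordinate-invariance check considerably; either route works.
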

The lifts $f^{(r)}$, $\zw^{(r)}$, and $X^{(r)}$ will be called \emph{complete lifts to $\sT^rM$} and denoted also
$f^{(c)}$, $\zw^{(c)}$, and $X^{(c)}$.

\begin{remark}
By convention, $f^{(\zl)}=0$, $\zw^{(\zl)}=0$, and $X^{(\zl)}=0$ if $\zl<0$ or $\zl>r$.
\end{remark}
\begin{remark}
If the vector field $X\in\mathfrak{X}(M)$ induces a one-parameter group of transformations $\psi_t$, then $X^{c}\in\mathfrak{X}(\sT^rM)$ induces the one-parameter group of transformations $\sT^r\psi_t$ on $\sT^rM$. In the case of the tangent bundle $\sT M$, i.e. in the case $r=1$, the 0-lift of $X$ is identical with the vertical lift of $X$, while the 1-lift of $X$ is identical with the complete lift of $X$, as defined in \cite{Grabowski:1995,Yano:1973}.

\begin{example} For $r=1$, in natural coordinates $(x^i,\dot x^j)$ the above lifts of a vector field $X=X^i\frac{\partial}{\partial x^i}$ read
$$X^{(0)}=X^i \frac{\partial}{\partial \dot x^i},\qquad X^{(1)}=X^i\frac{\partial}{\partial x^i}+\frac{\partial X^k}{\partial x^j}\dot x^j\frac{\partial}{\partial \dot x^k}.$$
For $r=2$ in coordinates $\left(x^i_0,x^j_{1},x^k_{2}\right)$ on $\sT^2M$ we get
$$\begin{array}{l}\vspace{5pt}
\displaystyle X^{(0)}=X^i \frac{\partial}{\partial x^i_{2}}\,, \\ \vspace{5pt}
\displaystyle X^{(1)}=X^i\frac{\partial}{\partial x^i_{1}}+\frac{\partial X^k}{\partial x^j} \,x^j_{1}\,\frac{\partial}{\partial x^k_{2}}\,,\\
\displaystyle X^{(2)}=X^i\frac{\partial}{\partial x^i}+ \frac{\partial X^k}{\partial x^j}\, x^j_{1}\,\frac{\partial}{\partial x^k_{1}}+
        \left(\frac{1}{2}\frac{\partial^2 X^l}{\partial x^n\partial x^m}\,x^n_{1}\, x^m_{1}+\frac{\partial X^l}{\partial x^p}\, x^p_{2}\right)\frac{\partial}{\partial x^l_{2}}.
\end{array}$$
For a one-form $\alpha=\alpha_i\rmd x^i$ we have in turn
$$\begin{array}{l}\vspace{5pt}
\displaystyle \alpha^{(0)}=\alpha_i\,\rmd x^i; \\ \vspace{5pt}
\displaystyle \alpha^{(1)}=\frac{\partial \alpha_i}{\partial x^j}\, x^j_{1}\,\rmd x^i+\alpha_k\,\rmd x^k_{1};\\
\displaystyle \alpha^{(2)}=\left(\frac{1}{2}\frac{\partial \alpha_i}{\partial x^k\partial x^j}\, x^k_{1}\, x^j_{1}+\frac{\partial \alpha_i}{\partial x^l}\, x^l_{2}\right)\,\rmd x^i+
                 \frac{\partial \alpha_m}{\partial x^n}\, x^n_{1}\,\rmd x^m_{1} + \alpha_p\,\rmd x^p_{2}.
\end{array}$$
\end{example}

In \cite{Wamba:2012} the authors define the same lifts of functions, one-forms, and vector fields by means of canonical isomorphisms (see \cite{Cantrijn:1989,Grabowski:1995,Grabowski:1999,Kolar:1996})
$$\zk^r_M:\sT^r\sT M\to\sT\sT^rM\,,\quad \ze^r_M:\sT^r\sT^*M\to\sT^*\sT^rM\,.$$
The lifts of one-forms and vector fields are defined as
$$\zw^{(\zb)}=\ze^r_M\circ\zq^{(r-\zb)}_{\sT^*M}\circ\sT^r\zw\,,\quad X^{(\zb)}=\zk^r_M\circ\zq^{(\zb)}_{\sT M}\circ\sT^rX\,.$$
Here, we view $\zw$ and $X$ as sections $\zw:M\to\sT^*M$ and $X:M\to\sT M$. Here, for a vector bundle $E\to M$ $\zq^{(\zb)}_E$ is a map $\zq^{(\zb)}_E:\sT^rE\to\sT^rE$ defined by
$$\zq^{(\zb)}_E(j_0^r\phi)=j^r_0(t^\zb\phi)\,.$$Recall that for a manifold $N$,  the map $\zq^{(\zb)}_N:\sT^rN\to\sT^rN$ is defined by
$$\zq^{(\zb)}_N(j_0^r\phi)=j^r_0(t^\zb\phi)\,,$$
where $\phi$ is a curve in $E$.
We get the same lifts as in \cite{Morimoto:1970c} with one exception: $X^{(\zl)}$ in \cite{Wamba:2012} is the same as
$X^{(r-\zl)}$ in \cite{Morimoto:1970c}. We will be using the notation of \cite{Morimoto:1970c}.
\end{remark}

\begin{theorem}(Morimoto \cite{Morimoto:1970c})
\
\begin{itemize}
\item If $X\in\mathfrak{X}(M)$ and $f\in C^\infty(M)$, then
\be\label{e0}(f\cdot X)^{(\zl)}=\sum_{\zm=0}^\zl f^{(\zm)}\,X^{(\zl-\zm)}\,.\ee
In particular,
$$\left(\sum_ia_i\pa_{x^i}\right)^{(\zl)}=\sum_i\sum_{\zn=r-\zl}^ra_i^{(\zn+\zl-r)}\pa_{x^i_\zn}\,.$$
\item If $X,Y\in\mathfrak{X}(M)$, then \be\label{Lb} [X^{(\zl)},Y^{(\zm)}]=[X,Y]^{(\zl+\zm-r)}\,.\ee
\item If $X\in\mathfrak{X}(M)$ and $\zw\in\zW^1(M)$, then
$$ i_{X^{(\zl)}}\zw^{(\zm)}=(i_X\zw)^{(\zl+\zm-r)}\,.$$
\end{itemize}
\end{theorem}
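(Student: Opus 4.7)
My plan is to derive all three identities from the uniqueness assertion in Theorem \ref{t6}: $X^{(\zl)}$ is characterized by the relation $X^{(\zl)} f^{(\zn)} = (Xf)^{(\zl+\zn-r)}$ for every $f \in C^\infty(M)$ and every $\zn$. Since the coordinates $x^i_\zn = (x^i)^{(\zn)}$ on $\sT^r M$ are themselves lifted functions, any vector field on $\sT^r M$ is determined by its action on the family $\{f^{(\zn)}\}$, so the characterizing relation pins down $X^{(\zl)}$ as an individual vector field. I would also use repeatedly the vanishing convention $f^{(\zk)} = 0$ for $\zk < 0$ or $\zk > r$ and the Leibniz rule $(fg)^{(\zl)} = \sum_{\zm=0}^{\zl} f^{(\zm)} g^{(\zl-\zm)}$ for function lifts, already recorded in the excerpt.

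For the first identity, I would set $Y := \sum_{\zm=0}^{\zl} f^{(\zm)} X^{(\zl-\zm)}$ and compute
\[
Y g^{(\zn)} \;=\; \sum_{\zm=0}^{\zl} f^{(\zm)}\, X^{(\zl-\zm)} g^{(\zn)} \;=\; \sum_{\zm=0}^{\zl} f^{(\zm)}\, (Xg)^{(\zl+\zn-r-\zm)},
\]
while the Leibniz rule yields $((fX)g)^{(\zl+\zn-r)} = \sum_{\zk=0}^{\zl+\zn-r} f^{(\zk)} (Xg)^{(\zl+\zn-r-\zk)}$. Both sums share the summand $f^{(\zm)} (Xg)^{(\zl+\zn-r-\zm)}$, and under the vanishing conventions they collapse to the common non-zero range $\max(0,\zl+\zn-2r) \le \zm \le \zl+\zn-r$; hence they coincide and uniqueness forces $Y = (fX)^{(\zl)}$. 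The explicit coordinate formula then follows by taking $X = a_i \pa_{x^i}$, applying this Leibniz rule, using $(\pa_{x^i})^{(\zl-\zm)} = \pa_{x^i_{r-\zl+\zm}}$ from Theorem \ref{t6}, and reindexing $\zn = r - \zl + \zm$.

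For the bracket formula, I would compute directly
\[
[X^{(\zl)}, Y^{(\zm)}]\, f^{(\zn)} \;=\; X^{(\zl)}(Yf)^{(\zm+\zn-r)} - Y^{(\zm)}(Xf)^{(\zl+\zn-r)} \;=\; ([X,Y]f)^{(\zl+\zm+\zn-2r)},
\]
invoking the defining relation twice and then the $\R$-linearity of $L_{\zl+\zm+\zn-2r}$ on functions. The right-hand side of the claim, applied to $f^{(\zn)}$, equals $([X,Y]f)^{((\zl+\zm-r)+\zn-r)}$, which coincides, so agreement on all lifted functions yields equality of the two vector fields.

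For the contraction identity, I would first extend the Leibniz rule from $\zw = f'\,\rmd g$ to an arbitrary one-form $\zw = \sum_i \zw_i \rmd x^i$ by $\R$-linearity and a reindexing of the resulting triple sum, obtaining $(f\zw)^{(\zm)} = \sum_{\zj=0}^{\zm} f^{(\zj)} \zw^{(\zm-\zj)}$. It then suffices to verify the identity on generators $\zw = \rmd g$, where it is immediate:
\[
i_{X^{(\zl)}}\, \rmd g^{(\zm)} \;=\; X^{(\zl)} g^{(\zm)} \;=\; (Xg)^{(\zl+\zm-r)} \;=\; (i_X \rmd g)^{(\zl+\zm-r)}.
\]
This propagates via the Leibniz rule to $\zw = f\, \rmd g$ and then by linearity to all one-forms. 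The principal obstacle throughout is purely combinatorial --- the careful bookkeeping of the summation limits under the vanishing conventions; once these are tracked correctly, every assertion reduces formally to the defining property of Theorem \ref{t6}.
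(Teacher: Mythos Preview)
The paper does not supply its own proof of this theorem; it is stated with attribution to Morimoto \cite{Morimoto:1970c} and no argument is given. Your proof is correct and is essentially the standard one: all three identities are reduced, via the characterizing property $X^{(\zl)}f^{(\zn)}=(Xf)^{(\zl+\zn-r)}$ and the fact that the lifted functions $f^{(\zn)}$ contain a full coordinate system on $\sT^rM$, to the generalized Leibniz rule for function lifts. The combinatorial check you flag---that the two sums in the first identity have the same effective range---goes through because $0\le\zl,\zn\le r$ forces $\zl+\zn-r\le\min(\zl,r)$, so the nominal upper limit $\zl$ in your sum for $Y g^{(\zn)}$ never truncates a nonzero term.
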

\noindent Finally, we apply the generalized Leibniz rule for the lifts of tensor product:
\be\label{e2}(T\otimes S)^{(\zl)}=\sum_{\zm=0}^\zl T^{(\zm)}\ot S^{(\zl-\zm)}\ee
to obtain the lifts of multivector fields
$$ (X_1\we\dots\we X_q)^{(\zl)}=\sum_{\zm_1+\dots+\zm_q=\zl}(X_1)^{(\zm_1)}\we\dots\we(X_q)^{(\zm_q)}$$
and differential forms
$$(\za_1\we\dots\we\za_p)^{(\zl)}=\sum_{\zm_1+\dots+\zm_p=\zl}(\za_1)^{(\zm_1)}\we\dots\we(\za_p)^{(\zm_p)}\,.$$
Actually, we can obtain this way the lifts of arbitrary $q$-contravariant and $p$-covariant tensor fields:
$$L_\zl:\mathscr{T}^q_p(M)\to\mathscr{T}^q_p(\sT^rM)\,,$$ so for arbitrary tensor fields we have the lift
$$L_\zl:\mathscr{T}(M)\to\mathscr{T}(\sT^rM)\,,$$ where $\mathscr{T}(M)=\oplus_{p,q}\mathscr{T}^q_p(M)$. The lifts $L_r(K)$ we will call \emph{complete lifts} and denote with $K^{(c)}$.

\begin{theorem}[\cite{Morimoto:1970c}]
\
\begin{itemize}
\item If $\zw$ is a $p$-form on $M$, then
$$\rmd \zw^{(\zl)}=(\rmd\zw)^{(\zl)}\,,$$
and $$i_{X^{(\zl)}}\zw^{(\zm)}=(i_X\zw)^{(\zl+\zm-r)}\,.$$
\item If $X\in\mathfrak{X}(M)$ and $K\in\mathscr{T}(M)$, then
$$\Ll_{X^{(\zl)}}K^{(\zm)}=(\Ll_X K)^{(\zl+\zm-r)}\,.$$
\end{itemize}
\end{theorem}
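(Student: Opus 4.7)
The plan is to reduce each assertion to cases already handled in the preceding theorems (functions, vector fields, and $1$-forms), and then propagate via the generalized Leibniz rule (\ref{e2}) for the lift of tensor products. Throughout, the convention $K^{(\zn)}=0$ for $\zn<0$ or $\zn>r$ will silently absorb boundary terms when re-indexing convolution sums.

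For the identity $\rmd\zw^{(\zl)}=(\rmd\zw)^{(\zl)}$, I would first observe that for any smooth $g$ on $M$, the defining relation $L_\zl(f\cdot\rmd g)=\sum_\zm f^{(\zm)}\,\rmd g^{(\zl-\zm)}$ from Theorem \ref{t6} specialized to $f\equiv 1$ yields $(\rmd g)^{(\zl)}=\rmd(g^{(\zl)})$; in particular $\rmd$ annihilates $(\rmd x^i)^{(\zn)}=\rmd x^i_\zn$. By $\R$-linearity of $L_\zl$, it suffices to treat a $p$-form $\zw=f\,\rmd x^{i_1}\wedge\cdots\wedge \rmd x^{i_p}$. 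Expanding $\zw^{(\zl)}$ by (\ref{e2}), applying $\rmd$, and using that $\rmd$ kills every $(\rmd x^{i_a})^{(\zm_a)}$, only the term $\rmd(f^{(\zm_0)})=(\rmd f)^{(\zm_0)}$ survives on each summand. A re-indexing then identifies the result with $L_\zl(\rmd f\wedge \rmd x^{i_1}\wedge\cdots\wedge \rmd x^{i_p})=(\rmd\zw)^{(\zl)}$.

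For the contraction identity $i_{X^{(\zl)}}\zw^{(\zm)}=(i_X\zw)^{(\zl+\zm-r)}$ I would induct on the form degree $p$. The case $p=1$ is exactly the statement proved in the preceding theorem. For the inductive step, write locally $\zw=\za\wedge\zb$ with $\za$ a $1$-form and $\zb$ a $(p-1)$-form, expand $(\za\wedge\zb)^{(\zm)}$ using (\ref{e2}), and apply $i_{X^{(\zl)}}$ as a graded derivation of the exterior algebra. By the induction hypothesis and the $p=1$ case, each summand becomes
\[
 (i_X\za)^{(\zl+\zn-r)}\wedge\zb^{(\zm-\zn)}-(-1)^{\varepsilon}\za^{(\zn)}\wedge(i_X\zb)^{(\zl+\zm-\zn-r)},
\]
and recollecting the two convolution sums via (\ref{e2}) gives $\big((i_X\za)\wedge\zb - (-1)^{\varepsilon}\za\wedge i_X\zb\big)^{(\zl+\zm-r)}=(i_X\zw)^{(\zl+\zm-r)}$.

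For the Lie derivative identity, I would use the fact that $\Ll_Y$ on $\mathscr{T}(M)$ is uniquely determined by its action on functions and vector fields together with the Leibniz rule with respect to $\ot$ and commutation with contractions. On functions, $\Ll_{X^{(\zl)}}f^{(\zm)}=X^{(\zl)}f^{(\zm)}=(Xf)^{(\zl+\zm-r)}$ by Theorem \ref{t6}; on vector fields it is the bracket identity (\ref{Lb}); on $1$-forms I would apply Cartan's formula $\Ll_Y=i_Y\rmd+\rmd\, i_Y$ together with the two identities just proved. For a general tensor field, I would write a local decomposition into elementary tensor products of vector fields, $1$-forms and functions, lift term by term with (\ref{e2}), apply $\Ll_{X^{(\zl)}}$ as a derivation of tensor product, invoke the preceding cases on each factor, and recollect. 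The main subtlety here is purely combinatorial bookkeeping: when a derivation whose commutator shifts degree by $\zl-r$ hits a convolution sum indexed by $\zm$, a double sum appears whose re-indexing to a single $(\zl+\zm-r)$-lift is the whole content of the formula; the convention on vanishing lifts at out-of-range indices ensures there are no boundary corrections.
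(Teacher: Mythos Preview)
The paper does not prove this theorem; it is simply quoted from \cite{Morimoto:1970c} with no argument given. Your proof is correct and is essentially the standard one: reduce to functions, vector fields and $1$-forms using the results already recorded (Theorem~\ref{t6}, equation~(\ref{Lb}), and the $1$-form contraction identity), and extend to arbitrary forms and tensors by the generalized Leibniz rule~(\ref{e2}) together with the derivation properties of $\rmd$, $i_X$ and $\Ll_X$. Two cosmetic points: when you write ``$\zw=\za\wedge\zb$'' you should say that $\zw$ is locally a \emph{sum} of such terms (linearity then finishes), and your unexplained sign placeholder ``$(-1)^\varepsilon$'' should simply be $(-1)^{|\za|}=-1$ since $\za$ is a $1$-form. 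Neither affects the validity of the argument.
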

One can generalize (\ref{Lb}) to the Schouten (Schouten-Nijenhuis) bracket of multivector fields (see \cite{Grabowski:2013a,Grabowski:1995,Wamba:2012,Schouten:1940}) which is a graded bracket of degree $-1$ on the graded space of multivector fields.
Recall that the Schouten bracket on multivector fields takes  the form
\be\label{Sch} [X_1\wedge\dots \wedge X_k, Y_1\wedge \dots \wedge
Y_l]_S = \sum_{i,j}(-1)^{i+j} [X_i,Y_j] \wedge X_1\wedge
\dots\wedge \widehat{X}_i \wedge \dots \wedge X_k\wedge
Y_1\wedge \dots \wedge \widehat{Y}_j\wedge \dots \wedge Y_l\,.\ee
This formula together with (\ref{Lb}) gives the following.
\begin{theorem}
If $X$ and $Y$ are multivector fields on $M$, then the Schouten bracket $[\cdot,\cdot]_{S}$ is related to
the lifts by
$$ [X^{(\zl)},Y^{(\zm)}]_{S}=[X,Y]_S^{(\zl+\zm-r)}\,.$$
\end{theorem}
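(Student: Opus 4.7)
The plan is to reduce to decomposable multivector fields and then match the combinatorial structure on both sides using the already-established lift formulas.

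First, I would observe that both sides are $\R$-bilinear in $(X,Y)$: the Schouten bracket is $\R$-bilinear, the lift $L_\nu$ is $\R$-linear, and compositions preserve these properties. Since every $k$-vector field can be written locally as a finite $\R$-linear combination of decomposables of the form $Z_1\wedge\cdots\wedge Z_k$ with $Z_i\in\mathfrak{X}(M)$ (absorb any smooth coefficient into one factor), it suffices to prove the identity for $X=X_1\wedge\cdots\wedge X_k$ and $Y=Y_1\wedge\cdots\wedge Y_l$ with $X_i,Y_j\in\mathfrak{X}(M)$.

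Next, I would expand both sides. For the left-hand side, apply the multi-wedge version of (\ref{e2}) to get
\[
X^{(\zl)}=\!\!\sum_{\zm_1+\cdots+\zm_k=\zl}\!\! X_1^{(\zm_1)}\we\cdots\we X_k^{(\zm_k)},\qquad
Y^{(\zm)}=\!\!\sum_{\zn_1+\cdots+\zn_l=\zm}\!\! Y_1^{(\zn_1)}\we\cdots\we Y_l^{(\zn_l)},
\]
then apply (\ref{Sch}) to each pair of summands and the Lie bracket identity (\ref{Lb}) to replace $[X_i^{(\zm_i)},Y_j^{(\zn_j)}]$ by $[X_i,Y_j]^{(\zm_i+\zn_j-r)}$. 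For the right-hand side, expand $[X,Y]_S$ by (\ref{Sch}), then apply (\ref{e2}) to each resulting wedge product $[X_i,Y_j]\we X_1\we\cdots\widehat{X_i}\cdots\we Y_1\we\cdots\widehat{Y_j}\cdots$ to distribute the total lift degree $\zl+\zm-r$ across the $k+l-1$ factors.

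The final step is to identify the two resulting sums. Both are indexed by pairs $(i,j)\in\{1,\dots,k\}\times\{1,\dots,l\}$ carrying the same sign $(-1)^{i+j}$, and for each fixed $(i,j)$ both amount to a sum of $(k+l-1)$-fold wedge products of $[X_i,Y_j]^{(\zr_0)}$ with the remaining $X_{i'}^{(\zr_{i'})}$ and $Y_{j'}^{(\zs_{j'})}$. In the LHS expansion the total degree satisfies $\zr_0+\sum_{i'\neq i}\zm_{i'}+\sum_{j'\neq j}\zn_{j'}=(\zm_i+\zn_j-r)+(\zl-\zm_i)+(\zm-\zn_j)=\zl+\zm-r$, which is exactly the constraint governing the RHS expansion; the bijection between the index sets is the identity. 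The equality then follows termwise.

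The main obstacle I foresee is purely bookkeeping: keeping the hat-omissions, the signs $(-1)^{i+j}$, and the two layers of indices (the pair $(i,j)$ selecting the Lie bracket factor, and the tuple of lift-degrees summing to the prescribed total) straight as one expands both sides. The negative-degree convention $X^{(\nu)}=0$ for $\nu<0$ or $\nu>r$ removes any boundary subtleties, so no auxiliary density or extension argument is needed.
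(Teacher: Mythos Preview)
Your proposal is correct and follows exactly the approach the paper indicates: the paper's proof consists of the single sentence ``This formula together with (\ref{Lb}) gives the following,'' meaning the result is obtained by combining the decomposable Schouten formula (\ref{Sch}) with the vector-field identity $[X^{(\zl)},Y^{(\zm)}]=[X,Y]^{(\zl+\zm-r)}$. Your write-up is simply a detailed unpacking of that one-line argument, including the bookkeeping of matching the two index sets via the degree constraint, and is entirely sound.
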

\begin{corollary}[\cite{Grabowski:1995,Wamba:2012}]\label{wP}
The complete lift preserves the Schouten bracket
$$[X^{(c)},Y^{(c)}]_{S}=[X,Y]_{S}^{(c)}\,.$$
In particular, the complete lift of a Poisson tensor is a Poisson tensor.
\end{corollary}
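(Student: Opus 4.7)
The plan is to derive both statements as direct specializations of the preceding theorem on lifts of Schouten brackets. By definition the complete lift is $K^{(c)} = L_r(K) = K^{(r)}$, so setting $\lambda = \mu = r$ in the identity $[X^{(\lambda)}, Y^{(\mu)}]_S = [X,Y]_S^{(\lambda+\mu-r)}$ yields
$$[X^{(c)}, Y^{(c)}]_S \;=\; [X,Y]_S^{(r+r-r)} \;=\; [X,Y]_S^{(r)} \;=\; [X,Y]_S^{(c)},$$
which is the first assertion. No computation is needed beyond substituting the index value.

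For the Poisson statement, I would invoke the standard characterization: a bivector field $\Pi \in \mathscr{T}^2_0(M)$ defines a Poisson structure if and only if $[\Pi,\Pi]_S = 0$. Applying the identity just established with $X = Y = \Pi$ gives $[\Pi^{(c)}, \Pi^{(c)}]_S = [\Pi,\Pi]_S^{(c)}$. Since the lift $L_\lambda$ is $\R$-linear on functions (as observed after the definition of $\zl$-lift of functions), and this linearity propagates to one-forms and vector fields by Theorem \ref{t6}, and then to arbitrary tensors via the generalized Leibniz rule (\ref{e2}), we have $L_r(0) = 0$. Hence $[\Pi^{(c)}, \Pi^{(c)}]_S = 0$, so $\Pi^{(c)}$ is a Poisson tensor on $\sT^r M$.

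There is essentially no obstacle: once the bracket identity of the preceding theorem is in hand, the corollary follows by pure substitution together with the linearity of $L_r$. The only subtlety worth flagging is that $\Pi^{(c)}$ is indeed a bivector (not a tensor of some other type), which is immediate from the fact that $L_r$ preserves tensor type $\mathscr{T}^q_p$, as recorded in the statement $L_\lambda : \mathscr{T}^q_p(M) \to \mathscr{T}^q_p(\sT^r M)$ given just before the corollary.
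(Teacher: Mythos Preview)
Your proof is correct and is exactly the deduction the paper intends: the corollary is stated without proof immediately after the theorem $[X^{(\zl)},Y^{(\zm)}]_{S}=[X,Y]_S^{(\zl+\zm-r)}$, and your substitution $\zl=\zm=r$ together with the $\R$-linearity of $L_r$ is precisely the implicit argument. There is nothing to add.
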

For vector valued forms from $\zW(M,\sT M)$, we have the Nijenhuis-Richardson bracket \cite{Grabowski:2013a,Grabowski:1997,Kolar:1996,Lecomte:1992}:
If $\zm\in\zW^k(M)$ and $\zn\in\zW^l(M)$ and $X,Y\in\mathfrak{X}(M)$, then
$$[\zm\ot X,\zn\ot Y]_{NR}=\zm\we i_X\zn\ot Y+(-1)^ki_Y\zm\we\zn\ot X\,.$$
\begin{theorem}[\cite{Lecomte:1992}]
 The Nijenhuis-Richardson bracket makes the space of vector valued forms $\zW(M,\sT M)$ into a graded Lie algebra. The
graded bracket is of degree $-1$.
\end{theorem}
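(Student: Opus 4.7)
The plan is to realize the Nijenhuis--Richardson bracket as the graded commutator of a family of graded derivations of the exterior algebra $(\zW(M),\we)$. First, to each vector-valued form $K\in\zW^k(M,\sT M)$ I would associate an \emph{insertion operator} $\iota_K$, a graded derivation of $\zW(M)$ of degree $k-1$, defined on decomposable elements $K=\zm\ot X$ by $\iota_{\zm\ot X}\za=\zm\we i_X\za$ and extended $\R$-linearly. This operator vanishes on $C^\infty(M)$ and obeys the graded Leibniz rule
$$\iota_K(\za\we\zb)=\iota_K\za\we\zb+(-1)^{p(k-1)}\za\we\iota_K\zb,\qquad \za\in\zW^p(M).$$

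The first main step is to prove that $K\mapsto\iota_K$ is an $\R$-linear isomorphism (of degree $-1$) from $\zW(M,\sT M)$ onto the subspace $\Der_{\tx{alg}}(\zW(M))\subset\End(\zW(M))$ consisting of graded derivations that vanish on $C^\infty(M)$. Injectivity is immediate from $\iota_K(\xd x^j)=K^j$ in local coordinates. Surjectivity reduces to the observation that any graded derivation vanishing on functions is automatically $C^\infty(M)$-linear on $\zW^1(M)$, hence, by tensor-hom duality, corresponds to a section of $\we^\bullet\sT^*M\ot_{C^\infty(M)}\mathfrak{X}(M)$; the derivation is then reconstructed on all of $\zW(M)$ by the Leibniz rule. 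This characterization is where the main technical work lies.

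Once the isomorphism $\iota$ is established, the graded Lie algebra structure follows from a general principle: on the graded associative algebra $\End(\zW(M))$ the graded commutator
$$[D,D']:=D\circ D'-(-1)^{|D||D'|}D'\circ D$$
is automatically a graded Lie bracket, and since the commutator of two graded derivations is again a graded derivation vanishing on $C^\infty(M)$, the subspace $\Der_{\tx{alg}}(\zW(M))$ is closed under $[\cdot,\cdot]$. Transporting this bracket to $\zW(M,\sT M)$ via $\iota$ shifts the degree by $-1$, so one defines $[K,L]_{NR}$ by $\iota_{[K,L]_{NR}}:=[\iota_K,\iota_L]$, and graded skew-symmetry and the graded Jacobi identity are inherited for free from the analogous identities for endomorphisms.

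The final step is to match this abstract definition with the stated explicit formula. For $K=\zm\ot X$ and $L=\zn\ot Y$, testing on a one-form $\za$ I obtain $\iota_K\iota_L\za=(\zm\we i_X\zn)\we i_Y\za$ and $\iota_L\iota_K\za=(\zn\we i_Y\zm)\we i_X\za$, because $\iota_K$ and $\iota_L$ annihilate the functions $i_Y\za$ and $i_X\za$ arising in the Leibniz expansions. Using $\zn\we i_Y\zm=(-1)^{l(k-1)}(i_Y\zm)\we\zn$ and collecting signs (the sign $(-1)^{(k-1)(l-1)}$ in the commutator combines with $(-1)^{l(k-1)}$ to yield $(-1)^k$), one reads off the insertion operator of $\zm\we i_X\zn\ot Y+(-1)^k i_Y\zm\we\zn\ot X$, reproducing the stated formula.
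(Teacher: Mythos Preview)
The paper does not give a proof of this theorem; it is stated with a citation to \cite{Lecomte:1992} and used as a known result. Your argument is the standard one (it is essentially the treatment in \cite{Kolar:1996} and the cited reference): realize $\zW(M,\sT M)$ as the space of algebraic graded derivations of $\zW(M)$ via $K\mapsto\iota_K$, transport the graded commutator back, and read off the explicit formula on one-forms. The sign computation in your last step is correct, and testing on $\zW^1(M)$ suffices because both sides of $\iota_{[K,L]_{NR}}=[\iota_K,\iota_L]$ are graded derivations vanishing on $C^\infty(M)$, hence determined by their restriction to one-forms.
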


\begin{theorem} For vector valued forms $\zm\ot X$ and $\zn\ot Y$ we have
$$[(\zm\ot X)^{(\zl)},(\zn\ot Y)^{(u)}]_{NR}=[\zm\ot X,\zn\ot Y]_{NR}^{(\zl+u-r)}\,.$$
In particular, the complete lift preserves the Nijenhuis-Richardson bracket
$$[(\zm\ot X)^{(c)},(\zn\ot Y)^{(c)}]_{NR}=[\zm\ot X,\zn\ot Y]_{NR}^{(c)}\,.$$
\end{theorem}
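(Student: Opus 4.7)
The plan is to reduce the identity to a bookkeeping exercise using three tools already available: the generalized Leibniz rule (\ref{e2}) for lifts of tensor products, the Leibniz rule for lifts of wedge products, and the interior-product formula $i_{X^{(\lambda)}}\omega^{(\mu)}=(i_X\omega)^{(\lambda+\mu-r)}$. Since both sides are bilinear over $\R$, it suffices to verify the identity on the decomposable vector-valued forms $\mu\ot X$ and $\nu\ot Y$.

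First, I would expand both lifts by Leibniz:
\begin{equation*}
(\mu\ot X)^{(\lambda)}=\sum_{a=0}^\lambda\mu^{(a)}\ot X^{(\lambda-a)}\,,\qquad (\nu\ot Y)^{(u)}=\sum_{b=0}^u\nu^{(b)}\ot Y^{(u-b)}\,,
\end{equation*}
and apply the Nijenhuis--Richardson bracket to each pure summand, using that $\deg\mu^{(a)}=\deg\mu=k$, so the sign $(-1)^k$ is unchanged by lifting. This gives
\begin{equation*}
[\mu^{(a)}\!\ot\! X^{(\lambda-a)},\nu^{(b)}\!\ot\! Y^{(u-b)}]_{NR}=\mu^{(a)}\we i_{X^{(\lambda-a)}}\nu^{(b)}\ot Y^{(u-b)}+(-1)^k i_{Y^{(u-b)}}\mu^{(a)}\we\nu^{(b)}\ot X^{(\lambda-a)}\,.
\end{equation*}
Next, I would apply the interior-product lift formula to turn $i_{X^{(\lambda-a)}}\nu^{(b)}$ into $(i_X\nu)^{(\lambda-a+b-r)}$ and $i_{Y^{(u-b)}}\mu^{(a)}$ into $(i_Y\mu)^{(u-b+a-r)}$.

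The final step is recognizing the resulting double sum as the Leibniz expansion of a single complete-lift expression. For the first term this is
\begin{equation*}
\sum_{a,b}\mu^{(a)}\we(i_X\nu)^{(\lambda-a+b-r)}\ot Y^{(u-b)}\,,
\end{equation*}
and substituting $p=a$, $q=\lambda-a+b-r$, $s=u-b$ one checks $p+q+s=\lambda+u-r$, which is exactly the Leibniz expansion of $(\mu\we i_X\nu\ot Y)^{(\lambda+u-r)}$. The analogous manipulation handles the second term, producing $(-1)^k(i_Y\mu\we\nu\ot X)^{(\lambda+u-r)}$. Summing yields $[\mu\ot X,\nu\ot Y]_{NR}^{(\lambda+u-r)}$, as required. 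The in-particular statement follows by setting $\lambda=u=r$.

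The only real subtlety, and what I expect to be the main obstacle, is the bookkeeping of admissible index ranges: summands where a lift index falls outside $[0,r]$ must vanish by the convention $f^{(\lambda)}=\omega^{(\lambda)}=X^{(\lambda)}=0$ for $\lambda<0$ or $\lambda>r$. One needs to check that the change of summation variables $(a,b)\leftrightarrow(p,q,s)$ with $p+q+s=\lambda+u-r$ is a bijection on the set of indices producing nonzero terms, so that nothing is lost or double-counted. Once this is confirmed, the identity is purely formal and no further computation is needed.
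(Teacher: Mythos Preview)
Your proof is correct and follows essentially the same route as the paper: expand both lifts via the generalized Leibniz rule, apply the Nijenhuis--Richardson formula termwise, replace each lifted interior product using $i_{X^{(\lambda)}}\omega^{(\mu)}=(i_X\omega)^{(\lambda+\mu-r)}$, and then recognize the resulting double sum as the Leibniz expansion of $[\mu\ot X,\nu\ot Y]_{NR}^{(\lambda+u-r)}$. Your explicit discussion of the index-range bijection (handled by the vanishing convention for out-of-range lifts) is more careful than the paper, which simply passes over that point.
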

\begin{proof}
\beas & [(\zm\ot X)^{(\zl)},(\zn\ot Y)^{(u)}]_{NR}=
\sum_{\za=0}^\zl \sum_{\zb=0}^u[\zm^{(\za)}\ot X^{(\zl-\za)},\zn^{(\zb)}\ot Y^{(u-\zb)}]_{NR}=\\
&\sum_{\za=0}^\zl \sum_{\zb=0}^u\left(\zm^{(\za)}\we i_{X^{(\zl-\za)}}\zn^{(\zb)}\ot Y^{(u-\zb)}+
(-1)^k(i_{Y^{(u-\zb)}}\zm^{(\za)}\we\zn^{(\zb)}\ot X^{(\zl-\za)}\right)=\\
&\sum_{\za=0}^\zl \sum_{\zb=0}^u\left(\zm^{(\za)}\we (i_X\zn)^{(\zb+\zl-\za-r)}\ot Y^{(u-\zb)}+
(-1)^k\left((i_Y\zm)^{(u-\zb+\za-r)}\we\zm^{(\za)}\ot X^{(\zl-\za)}\right)\right)=\\
&=[\zm\ot X,\zn\ot Y]_{NR}^{(\zl+u-r)}\,.
\eeas
\end{proof}
\noindent There is another interesting bracket on the space of vector valued forms, namely the  \emph{Fr\"olicher-Nijenhuis bracket} \cite{Frolicher:1956,Grabowski:2013a,Grabowski:1997,Kolar:1996}.
The {Fr\"olicher-Nijenhuis bracket} is
defined for simple tensors $\zm \otimes X$ and
$\zn \otimes Y$, where $X,Y\in\mathfrak{X}(M)$, $\zm\in\zW^k(M)$ and $\zn\in\zW^l(M)$, by the formula
                \bea\label{FN1} &
        [\zm \otimes X, \zn \otimes Y]_{FN} =
        \zm\wedge \zn \otimes
[X,Y] + \zm\wedge \Ll_X\zn\otimes Y -\Ll_Y\zm \wedge \zn \otimes
X + \\ \nn &(-1)^k(\xd\zm \wedge i_X\zn \otimes Y + i_Y\zm
\wedge \xd\zn \otimes X)\,.
                \eea
\begin{theorem}[\cite{Frolicher:1956}]
        The formula (\ref{FN1}) defines a graded Lie bracket
$[\cdot,\cdot]_{FN}$ of degree $0$ on the graded space of vector valued forms $\zW(M,\sT M)$.
\end{theorem}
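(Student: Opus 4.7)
My plan is to transfer the graded commutator of derivations on $\zW(M)$ to the space of vector-valued forms via the Fr\"olicher-Nijenhuis representation theorem. For $K = \zm\ot X\in\zW^k(M,\sT M)$, one defines an interior product $i_K:\zW^p(M)\to\zW^{p+k-1}(M)$ by $i_K\zb = \zm\we i_X\zb$, and the associated graded derivation
$$\Ll_K := i_K\circ\rmd - (-1)^{k-1}\rmd\circ i_K$$
of $\zW(M)$ of degree $k$. A short Cartan-calculus check shows that $\Ll_K$ commutes with $\rmd$; the Fr\"olicher-Nijenhuis representation theorem asserts that $K\mapsto\Ll_K$ is an isomorphism between $\zW(M,\sT M)$ and the space of $\rmd$-commuting graded derivations of $\zW(M)$.

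First I would check that the right-hand side of (\ref{FN1}) descends from an $\R$-bilinear combination on $\zW(M)\times\Vect(M)$ to a well-defined bilinear operation on $\zW(M,\sT M) = \zW(M)\ot_{\Ci(M)}\Vect(M)$. This reduces to verifying compatibility with the relation $(f\zm)\ot X = \zm\ot(fX)$, a local computation using $\Ll_X(f\zn)=(Xf)\zn+f\Ll_X\zn$, $i_{fY}\zm = fi_Y\zm$, and $\rmd(f\zm) = \rmd f\we\zm + f\rmd\zm$. The extra terms produced by sliding $f$ across the tensor product cancel pairwise among the six summands of (\ref{FN1}).

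The main step is to establish the identity
$$\Ll_{[K,L]_{FN}} = \Ll_K\circ\Ll_L - (-1)^{kl}\,\Ll_L\circ\Ll_K,$$
whose right-hand side is a priori a $\rmd$-commuting graded derivation of degree $k+l$ and hence equals $\Ll_N$ for a unique $N\in\zW^{k+l}(M,\sT M)$ by the representation theorem; the content is that $N$ coincides with the formula (\ref{FN1}). Since both sides are derivations, it is enough to evaluate them on functions $f\in\Ci(M)$ and on exact one-forms $\rmd f$. Expanding the graded commutator via $\Ll_X=[i_X,\rmd]$ and the standard identities $[\Ll_X,i_Y]=i_{[X,Y]}$, $[\Ll_X,\Ll_Y]=\Ll_{[X,Y]}$, and then sign-tracking, one recovers on $\rmd f$ exactly the combination $\mu\we\nu\ot[X,Y] + \mu\we\Ll_X\nu\ot Y - \Ll_Y\mu\we\nu\ot X + (-1)^k(\rmd\mu\we i_X\nu\ot Y + i_Y\mu\we\rmd\nu\ot X)$ of (\ref{FN1}) acting on $\rmd f$. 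Once this identification is made, graded anti-symmetry $[K,L]_{FN}=-(-1)^{kl}[L,K]_{FN}$ and the graded Jacobi identity transfer automatically from the corresponding properties of the graded commutator on the associative algebra of derivations of $\zW(M)$; the bracket has degree $0$ since $[K,L]_{FN}\in\zW^{k+l}(M,\sT M)$ whenever $K\in\zW^k(M,\sT M)$ and $L\in\zW^l(M,\sT M)$.

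The main obstacle is the sign bookkeeping in matching (\ref{FN1}) with the expansion of $\Ll_K\Ll_L - (-1)^{kl}\Ll_L\Ll_K$ acting on $f$ and $\rmd f$: one must carefully balance the factors $(-1)^k$, $(-1)^{kl}$, and $(-1)^{k-1}$ appearing both in the definition of $\Ll_K$ and in the graded commutator, and track how the exterior derivative and interior products interact on products of forms. This step is combinatorial but entirely mechanical; once it is carried out, all the algebraic properties of $[\cdot,\cdot]_{FN}$ are inherited from the derivation side with no further work, so no additional non-trivial ingredient is needed.
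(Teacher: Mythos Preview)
The paper does not actually prove this theorem: it is stated with a citation to Fr\"olicher--Nijenhuis \cite{Frolicher:1956} and no argument is given, since the result is classical background material rather than a contribution of the paper. Your sketch follows the standard modern route (as in \cite{Kolar:1996}): realize $\zW(M,\sT M)$ as the $\rmd$-commuting graded derivations of $\zW(M)$ via $K\mapsto\Ll_K$, transport the graded commutator back, and verify that it matches (\ref{FN1}) on decomposable tensors; this is correct in outline, and the only real labour is indeed the sign bookkeeping you flag.
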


\begin{theorem}\label{t1} For vector valued forms $\zm\ot X$ and $\zn\ot Y$, where $\zm$ is a $k$-form, we have
$$[(\zm\ot X)^{(\zl)},(\zn\ot Y)^{(u)}]_{FN}=[\zm\ot X,\zn\ot Y]_{FN}^{(\zl+u-r)}\,.$$
In particular, the complete lift preserves the Fr\"olicher-Nijenhuis bracket
$$[(\zm\ot X)^{(c)},(\zn\ot Y)^{(c)}]_{FN}=[\zm\ot X,\zn\ot Y]_{FN}^{(c)}\,.$$
\end{theorem}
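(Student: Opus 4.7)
The plan is to mirror the strategy just used for the Nijenhuis--Richardson bracket: expand the two lifts on the left-hand side using the generalized Leibniz rule (\ref{e2}), exploit $\R$-bilinearity of $[\cdot,\cdot]_{FN}$ to reduce to a double sum of brackets of simple tensors, apply the defining formula (\ref{FN1}) to each summand, and then recombine.

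The key point is that each of the five summands appearing in (\ref{FN1}) is built out of a Lie bracket, a Lie derivative, a contraction, an exterior derivative, and wedge/tensor products, and each of these operations interacts in a controlled way with the lifting: the Lie bracket via (\ref{Lb}); the Lie derivative and the contraction via the Morimoto identities $\Ll_{X^{(\zl-\za)}}\zn^{(\zb)}=(\Ll_X\zn)^{(\zl+\zb-\za-r)}$ and $i_{X^{(\zl-\za)}}\zn^{(\zb)}=(i_X\zn)^{(\zl+\zb-\za-r)}$ (and their $Y,\zm$ analogues); the differential via $\xd(\zm^{(\za)})=(\xd\zm)^{(\za)}$; and wedge/tensor products via (\ref{e2}) itself. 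Substituting these identities into
$$[\zm^{(\za)}\ot X^{(\zl-\za)},\zn^{(\zb)}\ot Y^{(u-\zb)}]_{FN}$$
and summing over $\za\in\{0,\dots,\zl\}$ and $\zb\in\{0,\dots,u\}$ produces, term by term, a double sum in which the three lift indices appearing (one on the $\zm$-type, one on the $\zn$-type and one on the $X$- or $Y$-type factor of the output triple tensor) always add up to $\zl+u-r$; note also that the form-degree $k$ of $\zm$ is unchanged by the lift, so the sign $(-1)^k$ in (\ref{FN1}) is stable.

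What remains is pure index bookkeeping: for each of the five terms of (\ref{FN1}) I would recognize the resulting double sum, via two applications of (\ref{e2}), as exactly the canonical expansion of the $(\zl+u-r)$-lift of that term, using the convention $(\cdot)^{(\zg)}=0$ for $\zg<0$ or $\zg>r$ to absorb boundary contributions (the constraint $\zl,u\le r$ guarantees that the index range $\za\in[0,\zl]$, $\zb\in[0,u]$ already surjects onto all admissible triples summing to $\zl+u-r$). Summing the five contributions then yields
$$[(\zm\ot X)^{(\zl)},(\zn\ot Y)^{(u)}]_{FN}=[\zm\ot X,\zn\ot Y]_{FN}^{(\zl+u-r)}\,,$$
and the statement for the complete lift follows by specializing to $\zl=u=r$. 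I expect no essential obstacle beyond this bookkeeping; all the geometric input is already contained in (\ref{FN1}) and in the earlier compatibility theorems of the section.
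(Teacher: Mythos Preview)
Your proposal is correct and follows essentially the same approach as the paper: expand both lifts via the generalized Leibniz rule (\ref{e2}), apply bilinearity and the defining formula (\ref{FN1}), substitute the Morimoto identities for $[\cdot,\cdot]$, $\Ll$, $i$, and $\xd$, and then recombine each of the five resulting triple sums (with total lift index $\zl+u-r$) back into the $(\zl+u-r)$-lift of the corresponding summand of $[\zm\ot X,\zn\ot Y]_{FN}$. The paper's proof simply writes this computation out explicitly without further commentary, so your description of the index bookkeeping (including the vanishing convention for out-of-range lifts and the stability of the sign $(-1)^k$) is in fact slightly more detailed than what the paper provides.
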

\begin{proof}
\beas & [(\zm\ot X)^{(\zl)},(\zn\ot Y)^{(u)}]_{FN}=
\sum_{\za=0}^\zl \sum_{\zb=0}^u[\zm^{(\za)}\ot X^{(\zl-\za)},\zn^{(\zb)}\ot Y^{(u-\zb)}]_{FN}=\\
&\sum_{\za=0}^\zl \sum_{\zb=0}^u\zm^{(\za)}\we\zn^{(\zb)}\ot[X^{(\zl-\za)},Y^{(u-\zb)}]+\\
&\sum_{\za=0}^\zl \sum_{\zb=0}^u\left(\zm^{(\za)}\we \Ll_{X^{(\zl-\za)}}\zn^{(\zb)}\ot Y^{(u-\zb)}
-\Ll_{Y^{(u-\zb)}}\zm^{(\za)}\we\zn^{(\zb)}\ot X^{(\zl-\za)}\right)+\\
&(-1)^k\sum_{\za=0}^\zl \sum_{\zb=0}^u\left(\rmd(\zm^{(\za)})\we i_{X^{(\zl-\za)}}\zn^{(\zb)}\ot Y^{(u-\zb)}+
i_{Y^{(u-\zb)}}\zm^{(\za)}\we\rmd(\zn^{(\zb)})\ot X^{(\zl-\za)}\right)=\\
&\sum_{\za=0}^\zl \sum_{\zb=0}^u\zm^{(\za)}\we\zn^{(\zb)}\ot[X,Y]^{(\zl-\za+u-\zb-r)}]+\\
&\sum_{\za=0}^\zl \sum_{\zb=0}^u\left(\zm^{(\za)}\we (\Ll_X\zn)^{(\zb+\zl-\za-r)}\ot Y^{(u-\zb)}-
(\Ll_Y\zm)^{(u-\zb+\za-r)}\we\zn^{(\zb)}\ot X^{(\zl-\za)}\right)+\\
&(-1)^k\sum_{\za=0}^\zl \sum_{\zb=0}^u\left((\rmd\zm)^{(\za)}\we (i_{X}\zn)^{(\zl-\za+\zb-r)}\ot Y^{(u-\zb)}+
(i_{Y}\zm)^{(u-\zb+\za-r)}\we(\rmd\zn)^{(\zb)}\ot X^{(\zl-\za)}\right)=\\
&=[\zm\ot X,\zn\ot Y]_{FN}^{(\zl+u-r)}\,.
\eeas
\end{proof}
\noindent Let $N$ be a $(1,1)$-tensor on $M$. We can also consider $N$ as a linear map $N:\sT M\to\sT M$. Such maps can be composed, so we can define $N_1\circ N_2$.
\begin{theorem}[\cite{Morimoto:1970c}]\label{t2}
The complete lifts to $\sT^rM$ preserve the composition of $(1,1)$ tensors
$$(N_1\circ N_2)^{(c)}=N_1^{(c)}\circ N_2^{(c)}\,.$$
Moreover, the complete lift of the identity map $I_{\sT M}:\sT M\to\sT M$ is the identity map $I_{\sT\sT^rM}:\sT\sT^rM\to\sT\sT^rM$.
\end{theorem}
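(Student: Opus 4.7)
The plan is to reduce to simple tensors and then expand both sides using the lift identities already established. A $(1,1)$-tensor on $M$ is locally a finite sum of simple tensors of the form $\omega\otimes X$, the lift $L_r$ is $\R$-linear, and the composition of $(1,1)$-tensors (viewed as endomorphisms of $\sT M$) is bilinear, so it suffices to verify the identity $(N_1\circ N_2)^{(c)}=N_1^{(c)}\circ N_2^{(c)}$ for $N_i=\omega_i\otimes X_i$.

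For such simple tensors I would first record the algebraic composition rule
$$
(\omega_1\otimes X_1)\circ(\omega_2\otimes X_2)=(i_{X_2}\omega_1)\,\omega_2\otimes X_1.
$$
On the left-hand side, combining the generalized Leibniz rule (\ref{e0}) for multiplication by a function with the tensor-product lift formula (\ref{e2}) yields
$$
(N_1\circ N_2)^{(c)}=\sum_{\mu,\nu\ge 0,\ \mu+\nu\le r}(i_{X_2}\omega_1)^{(\mu)}\cdot\omega_2^{(\nu)}\otimes X_1^{(r-\mu-\nu)}.
$$
On the right-hand side, applying (\ref{e2}) to each factor $N_i^{(c)}$, performing the composition termwise at the level of simple $(1,1)$-tensors on $\sT^rM$, and invoking the contraction identity $i_{X^{(\lambda)}}\omega^{(\mu)}=(i_X\omega)^{(\lambda+\mu-r)}$ produces
$$
N_1^{(c)}\circ N_2^{(c)}=\sum_{\alpha,\beta=0}^{r}(i_{X_2}\omega_1)^{(\alpha-\beta)}\cdot\omega_2^{(\beta)}\otimes X_1^{(r-\alpha)}.
$$
The substitution $\mu=\alpha-\beta$, $\nu=\beta$ (so that $r-\alpha=r-\mu-\nu$), combined with the convention that $\lambda$-lifts vanish for $\lambda<0$, matches the two sums term by term.

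For the identity tensor, I would use the tautological local expression $I_{\sT M}=\sum_i\mathrm d x^i\otimes\partial_{x^i}$ together with (\ref{e2}) and the formulas $(\mathrm d x^i)^{(\alpha)}=\mathrm d x^i_\alpha$ and $(\partial_{x^i})^{(\lambda)}=\partial_{x^i_{r-\lambda}}$ from Theorem \ref{t6}. This gives
$$
(I_{\sT M})^{(c)}=\sum_{i}\sum_{\alpha=0}^{r}\mathrm d x^i_\alpha\otimes\partial_{x^i_\alpha},
$$
and since $(x^i_0,\dots,x^i_r)$ is a full coordinate system on $\sT^rM$, this is precisely $I_{\sT\sT^rM}$.

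The main obstacle, modest as it is, lies in the combinatorial re-indexing: one must check that $(\alpha,\beta)\leftrightarrow(\mu,\nu)$ is a bijection between the effective supports $\{0\le\beta\le\alpha\le r\}$ and $\{\mu,\nu\ge 0,\ \mu+\nu\le r\}$ of the non-vanishing terms in the two double sums. Everything else is a direct application of the previously established lift formulas, and independence of the local decomposition into simple tensors is automatic from the bilinearity used in the reduction.
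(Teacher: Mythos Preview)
Your argument is correct. The paper does not actually supply a proof of this theorem; it is quoted from Morimoto \cite{Morimoto:1970c}, where the proof proceeds exactly along the lines you describe: reduce to simple tensors $\omega\otimes X$, use the contraction identity $i_{X^{(\lambda)}}\omega^{(\mu)}=(i_X\omega)^{(\lambda+\mu-r)}$ together with the tensor-product Leibniz rule (\ref{e2}), and match the resulting double sums. Your re-indexing $(\alpha,\beta)\leftrightarrow(\mu,\nu)=(\alpha-\beta,\beta)$ is precisely the needed bijection, and your computation of $(I_{\sT M})^{(c)}$ in adapted coordinates is the standard one. One small remark: the reduction to simple tensors is legitimate because locally $N=\sum_i\big(\sum_j N^i_j\,\rmd x^j\big)\otimes\partial_{x^i}$ is already a \emph{finite} $\R$-linear sum of simple tensors (with $C^\infty$ one-forms as the first factor), so only the $\R$-linearity of $L_r$ and the $\R$-bilinearity of composition are needed---you allude to this at the end, and it is indeed automatic.
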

\begin{corollary}
Complete lifts of almost complex structures are almost complex structures.
\end{corollary}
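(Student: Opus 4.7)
The plan is to read off the corollary directly from Theorem \ref{t2} together with the $\R$-linearity of the complete lift. Recall that an almost complex structure on $M$ is a $(1,1)$-tensor $J$ satisfying $J\circ J = -I_{\sT M}$, and an almost complex structure on $\sT^rM$ is a $(1,1)$-tensor $\wt J$ on $\sT^rM$ with $\wt J\circ\wt J = -I_{\sT\sT^rM}$. So, given $J$ with $J^2=-I_{\sT M}$, I want to verify the same identity for $\wt J = J^{(c)}$.

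First, I would apply the composition identity from Theorem \ref{t2} to obtain
\[
J^{(c)}\circ J^{(c)} = (J\circ J)^{(c)} = (-I_{\sT M})^{(c)}.
\]
Second, I would invoke the $\R$-linearity of the complete lift $L_r:\mathscr{T}^1_1(M)\to\mathscr{T}^1_1(\sT^rM)$, which follows from the linearity of the lifts of vector fields and one-forms stated in Theorem \ref{t6} together with the product rule (\ref{e2}) that defines the lift of general tensors. Linearity gives $(-I_{\sT M})^{(c)} = -(I_{\sT M})^{(c)}$. Third, by the second assertion of Theorem \ref{t2}, $(I_{\sT M})^{(c)} = I_{\sT\sT^rM}$. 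Chaining these three steps yields
\[
J^{(c)}\circ J^{(c)} = -I_{\sT\sT^rM},
\]
which is exactly the assertion that $J^{(c)}$ is an almost complex structure on $\sT^rM$.

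There is essentially no obstacle: all the nontrivial work has been done in Theorem \ref{t2} (compatibility of complete lift with composition, and preservation of the identity) and in the construction of $L_\zl$ as an $\R$-linear map. The only minor point to be careful about is that $J^{(c)}$ is genuinely a $(1,1)$-tensor on $\sT^rM$, i.e.\ an element of $\mathscr{T}^1_1(\sT^rM)$, so that the equation $J^{(c)}\circ J^{(c)} = -I_{\sT\sT^rM}$ indeed expresses the almost complex condition; this is automatic from the definition of the lift $L_r:\mathscr{T}^1_1(M)\to\mathscr{T}^1_1(\sT^rM)$.
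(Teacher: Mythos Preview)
Your proof is correct and follows exactly the approach the paper intends: the corollary is stated immediately after Theorem \ref{t2} with no separate proof, so the paper relies on precisely the chain $J^{(c)}\circ J^{(c)}=(J\circ J)^{(c)}=(-I_{\sT M})^{(c)}=-I_{\sT\sT^rM}$ that you spell out. Your added justification of the $\R$-linearity step is a bit more explicit than the paper, but the argument is the same.
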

\noindent Let us recall that a \emph{Nijenhuis tensor} on $M$ is a $(1,1)$-tensor such that
$$[N,N]_{FN}=0\,.$$
From Theorem \ref{t1} and Theorem \ref{t2} we get immediately the following.
\begin{corollary}[\cite{Morimoto:1970c}] The complete lift of a Nijenhuis tensor $N$ to $\sT^rM$ is a Nijenhuis tensor.
The complete lift of a complex structure is a complex structure.
\end{corollary}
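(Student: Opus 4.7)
The proof is essentially an immediate corollary of the preceding two theorems, once one observes that both the Fr\"olicher--Nijenhuis bracket and the complete lift extend by linearity from simple tensors to arbitrary vector-valued forms.

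First I would set up the algebraic framework. A Nijenhuis tensor $N$ is a $(1,1)$-tensor, i.e.\ an element of $\zW^1(M,\sT M)$, characterised by $[N,N]_{FN}=0$. An almost complex structure is a $(1,1)$-tensor $J$ with $J\circ J = -I_{\sT M}$, and it is a complex structure precisely when in addition $[J,J]_{FN}=0$. Both the complete lift $L_r:\mathscr{T}(M)\to\mathscr{T}(\sT^rM)$ and the FN bracket are $\R$-bilinear (the lift in one argument, the bracket in two), so the identity of Theorem~\ref{t1} for simple tensors $\zm\ot X$, $\zn\ot Y$ extends by bilinearity to arbitrary vector-valued forms. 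In particular, for $\zl=u=r$ one obtains
\begin{equation*}
[K^{(c)},L^{(c)}]_{FN}=[K,L]_{FN}^{(c)}\quad\text{for all } K,L\in\zW(M,\sT M)\,.
\end{equation*}

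For the Nijenhuis part, I would apply this identity with $K=L=N$. Since $N$ is Nijenhuis, $[N,N]_{FN}=0$, and the lift of the zero tensor is zero by linearity of $L_r$, so $[N^{(c)},N^{(c)}]_{FN}=0$. Thus $N^{(c)}$ is a Nijenhuis tensor on $\sT^rM$.

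For the complex structure part, I would first treat the almost complex condition using Theorem~\ref{t2}: the complete lift preserves composition of $(1,1)$-tensors and sends $I_{\sT M}$ to $I_{\sT\sT^rM}$. Combined with $\R$-linearity of the lift this gives
\begin{equation*}
J^{(c)}\circ J^{(c)}=(J\circ J)^{(c)}=(-I_{\sT M})^{(c)}=-I_{\sT\sT^rM}\,,
\end{equation*}
so $J^{(c)}$ is an almost complex structure on $\sT^rM$. The integrability condition $[J^{(c)},J^{(c)}]_{FN}=0$ then follows from the preservation of the FN bracket exactly as in the Nijenhuis case, so $J^{(c)}$ is a complex structure. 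There is no real obstacle here; the only point requiring care is the tacit bilinear extension of Theorem~\ref{t1} from decomposable vector-valued forms $\zm\ot X$ to general ones like $N$ and $J$, but this is routine since both sides of the identity are bilinear in the two arguments and linear in each tensor factor separately.
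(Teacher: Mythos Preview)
Your proof is correct and follows exactly the approach the paper takes: the corollary is stated immediately after Theorems~\ref{t1} and~\ref{t2} with the remark that it follows from them, and you have spelled out precisely how. Your additional observation about extending the identity of Theorem~\ref{t1} by bilinearity from decomposable tensors to arbitrary vector-valued forms is a fair point of care, but as you note it is routine.
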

\noindent Now, we will check the degrees of complete lifts of tensors with respect to the canonical homogeneity structure on $\sT^rM$. This homogeneity structure has the weight vector fields which in adapted coordinate system $(x^i,x^i_\zm)$ in $\sT^rM$ reads
$$\nabla_{\sT^rM}=\sum_{i=1}^n\sum_{\zm=1}^r\zm x^i_\zm\pa_{x^i_\zm}\,.$$
It is easy to see that the lifts of functions $f^{(\zl)}$ have the weight $\zl$,
$$\Ll_{\nabla_{\sT^rM}}f^{(\zl)}=\zl\cdot f^{(\zl)}\,.$$
Then, it is easy to calculate the weights of lifts of general tensor fields.
\begin{theorem}\label{t3} The degrees of the lifts of tensors to $\sT^rM$ are the following:
\begin{description}
\item{a)} For any differential form $\zw=\za_1\ot\cdots\ot\za_p$ on $M$, we have
$$\wu(\zw^{(\zl)})=\zl\,.$$
\item{b)} For any $q$-vector field $X=X_1\ot\dots\ot X_q$ on $M$, we have
$$\wu(X^{(\zl)})=\zl-qr\,.$$
\item{c)} For any $(q,p)$-tensor $K=\za_1\ot\dots\ot\za_p\ot X_1\ot\dots\ot X_q$ in $M$, we have
$$\wu(K^{(\zl)})=\zl-qr\,.$$
\item{d)} Permutations of factors in a homogeneous tensor product do not change the degree. Thus the degrees of wedge products are the same as tensor products.
\end{description}

In particular,
$$\wu\left((\zw_1^{(c)}\we\dots\we\zw_p^{(c)}\ot X_1^{(c)}\we\dots\we X_q)^{(c)}\right)=-(q-1)r$$
and complete lifts of vector fields commute with $\nabla_{\sT^rM}$.
\end{theorem}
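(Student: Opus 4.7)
The plan is to reduce the statement to how the lifts act on the coordinate generators $f$, $\rmd x^i$, and $\partial_{x^i}$, and then propagate the degree assignments through the Leibniz-type formulas \eqref{e0} and \eqref{e2} for lifts of products and tensor products. Since $\nabla_{\sT^rM} = \sum_{i,\zm} \zm\, x^i_\zm \partial_{x^i_\zm}$ assigns weight $\zm$ to each coordinate $x^i_\zm$, the base weights can be read off directly from Theorem~\ref{t6}. The identity $\Ll_{\nabla_{\sT^rM}} f^{(\zl)} = \zl\, f^{(\zl)}$ is already noted in the text just before the theorem; since $(\rmd x^i)^{(\zl)} = \rmd x^i_\zl$ and $\rmd$ commutes with $\Ll_{\nabla_{\sT^rM}}$, this lift has weight $\zl$; and from $(\partial_{x^i})^{(\zl)} = \partial_{x^i_{r-\zl}}$ together with the elementary bracket $[\nabla_{\sT^rM}, \partial_{x^i_\zn}] = -\zn\, \partial_{x^i_\zn}$, the lift $(\partial_{x^i})^{(\zl)}$ has weight $-(r-\zl) = \zl - r$.

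Next I extend this to arbitrary 1-forms and vector fields. For any 1-form $\za = \sum_i a_i \rmd x^i$, Theorem~\ref{t6} and linearity give
\[
\za^{(\zl)} = \sum_i \sum_{\zm=0}^{\zl} a_i^{(\zm)}\, \rmd x^i_{\zl-\zm},
\]
and each summand is the product of a weight-$\zm$ function and a weight-$(\zl-\zm)$ one-form, hence of weight $\zl$. Similarly, \eqref{e0} gives
\[
\Big(\sum_i a_i \partial_{x^i}\Big)^{\!(\zl)} = \sum_i \sum_{\zn=r-\zl}^{r} a_i^{(\zn+\zl-r)}\, \partial_{x^i_\zn},
\]
and each summand has weight $(\zn+\zl-r) + (-\zn) = \zl - r$. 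Thus (a) holds for $p=1$ and (b) holds for $q=1$.

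Finally, I apply the Leibniz rule \eqref{e2} inductively on the number of tensor factors. If $T^{(\zm)}$ and $S^{(\zl-\zm)}$ carry weights whose sum is independent of $\zm$, then every summand of $(T \otimes S)^{(\zl)} = \sum_{\zm=0}^{\zl} T^{(\zm)} \otimes S^{(\zl-\zm)}$ has that same weight, so the whole sum is homogeneous of that weight. Iterating yields (a), (b), and (c), with total weight $\zl - qr$ for a $(q,p)$-tensor because only the $q$ contravariant factors each contribute the shift $-r$. Part (d) is then immediate: the total weight is additive in the tensor factors and insensitive to their ordering, so (skew-)symmetrization preserves homogeneity. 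Specializing (c) to $q=1$, $p=0$, $\zl = r$ gives $\wu(X^{(c)}) = r - r = 0$, equivalently $[\nabla_{\sT^rM}, X^{(c)}] = 0$. The only delicate bookkeeping point in the argument, and the one I would emphasize, is the observation that every summand in each Leibniz expansion carries exactly the same weight; this is what licenses the conclusion that the entire sum is genuinely homogeneous rather than a formal combination of terms of varying degree.
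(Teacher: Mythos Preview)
Your proof is correct and follows essentially the same route as the paper: establish the degrees on the generators $f^{(\zl)}$, $(\rmd x^i)^{(\zl)}$, $(\partial_{x^i})^{(\zl)}$ via Theorem~\ref{t6}, then propagate through the Leibniz rule~\eqref{e2} for tensor products. The paper additionally supplies a short direct computation of $\wu(f^{(\zl)})=\zl$ from the curve definition $h_s([\phi]_r)=[\phi_s]_r$ rather than citing it, but otherwise the arguments coincide; your explicit check that every summand in each Leibniz expansion carries the same weight is a useful clarification that the paper leaves implicit.
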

\begin{proof}
The homogeneity structure $h$ on $\sT^rM$ may be defined by $h_s([\phi]_r)=[\phi_s]_r$, where $\phi_s(t)=\phi(st)$.
For any $f\in C^\infty(M)$ we have then
$$f^{(\zl)}(h_s([\phi]_r))=f^{(\zl)}([\phi_s]_r)=\frac{1}{\zl!}\left[\frac{\xd^{\zl}(f(\phi(st)))}{\xd\,t^{\zl}}\right]_{t=0}
=\frac{s^\zl}{\zl!}\left[\frac{\xd^{\zl}(f(\phi(t)))}{\xd\,t^{\zl}}\right]_{t=0}=s^\zl f^{(\zl)}([\phi]_r)\,,
$$
that means that $f^{(\zl)}$ is of degree $\zl$. Moreover, we already know that $(x^i)^{(\zl)}=x^i_\zl$.
From Theorem \ref{t6} it easily follows that for a one-form $\za$ and a vector field $X$
on $M$ the degrees of $\za^{(\zl)}$ and $X^{(\zl)}$ are, respectively, $\zl$ and $\zl-r$. In particular,
$(\xd x^i)^{(\zl)}=\xd x^i_\zl$ and $\pa_{x^i}=\pa_{x^i_{r-\zl}}$.
The rest follows directly from the rule (\ref{e2}) of lifting tensor products.

\end{proof}
\begin{theorem}\label{tx}
The complete lift $\nabla_F^{(c)}$ of a homogeneity structure $\nabla_F$ on the graded bundle $F$ of degree $k$ is a homogeneity structure of degree $k$ on $\sT^rF$ compatible with the canonical homogeneity structure on $\sT^rF$,
$$[\nabla_{\sT^rF},\nabla_F^{(c)}]=0\,.$$
This shows that the higher tangent bundle $\sT^rF$ of a graded bundle $\zt:F\to M$ is canonically a double graded bundle of bi-degree $(r,k)$:
$$\xymatrix{
\sT^rF\ar[rr]^{\zt^r_F} \ar[d]^{\sT^r\zt} && F\ar[d]^{{\zt}} \\
\sT^rM\ar[rr]^{\zt^r_M} && M }\,.
$$
\end{theorem}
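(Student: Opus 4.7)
The plan is to compute $\nabla_F^{(c)}$ explicitly in adapted local coordinates on $\sT^rF$, verify it has the shape of a weight vector field of degree $k$, and then obtain commutation with $\nabla_{\sT^rF}$ essentially for free from Theorem \ref{t3}. Fix homogeneous local coordinates $(x^i)$ on $F$ with weights $w_i\in\{0,1,\dots,k\}$, so that $\nabla_F=\sum_i w_i\,x^i\partial_{x^i}$. The induced coordinates on $\sT^rF$ are $(x^i_\mu)$, $\mu=0,\dots,r$, with $x^i_0=x^i$. Using linearity of the lift $L_r$, the generalized Leibniz rule (\ref{e0}) applied to $f=x^i$ and $X=\partial_{x^i}$, and the formulae $(x^i)^{(\mu)}=x^i_\mu$ and $(\partial_{x^i})^{(\ell)}=\partial_{x^i_{r-\ell}}$ from Theorem \ref{t6}, I would compute
$$\nabla_F^{(c)}=\sum_i w_i\,(x^i\partial_{x^i})^{(r)}=\sum_i w_i\sum_{\mu=0}^{r}x^i_\mu\,\partial_{x^i_\mu}.$$

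This is exactly of the canonical form (\ref{wvf}): in the adapted coordinate system $(x^i_\mu)$ on $\sT^rF$, each coordinate $x^i_\mu$ has non-negative integer weight $w_i$, the maximum being $k$. Hence $\nabla_F^{(c)}$ is the weight vector field of a graded bundle structure on $\sT^rF$ of degree exactly $k$.

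For the commutativity $[\nabla_{\sT^rF},\nabla_F^{(c)}]=0$, the quickest route is to invoke Theorem \ref{t3}(b) with $q=1$ and $\zl=r$, which gives $\wu(X^{(c)})=r-r=0$ for every vector field $X$ on $F$; applied to $X=\nabla_F$, this is precisely the assertion that $\nabla_F^{(c)}$ has degree zero with respect to $\nabla_{\sT^rF}$, i.e.\ that the two commute. Alternatively, the coordinate expressions $\nabla_{\sT^rF}=\sum_{i,\mu}\mu\,x^i_\mu\partial_{x^i_\mu}$ and the formula for $\nabla_F^{(c)}$ above are linear combinations of the pairwise commuting Euler-type operators $x^i_\mu\partial_{x^i_\mu}$, so their bracket vanishes by inspection.

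Finally, to get the double graded bundle diagram, I would observe that the zero locus of $\nabla_{\sT^rF}$ is $F\subset\sT^rF$, recovering the projection $\zt^r_F$, while the zero locus of $\nabla_F^{(c)}$ consists of those points where all $x^i_\mu$ with $w_i>0$ vanish; since the coordinates of $F$ with $w_i=0$ are exactly the pull-backs of coordinates on $M$, this locus is precisely $\sT^rM$, and the corresponding projection is $\sT^r\zt$. The two commuting weight vector fields, of degrees $r$ and $k$, therefore endow $\sT^rF$ with the stated double graded bundle structure of bi-degree $(r,k)$. The computation is essentially mechanical, so no real obstacle is anticipated; the only point worth writing out carefully is the verification that the zero locus of $\nabla_F^{(c)}$ is $\sT^rM$, which is immediate from the coordinate expression.
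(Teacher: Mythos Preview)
Your proposal is correct and follows essentially the same route as the paper: compute $\nabla_F^{(c)}$ in adapted coordinates via (\ref{e0}) and Theorem~\ref{t6}, then invoke Theorem~\ref{t3} for the commutation. The only minor difference is that the paper first identifies the associated homogeneity structure globally as $\sT^r(h_t)$ (using functoriality, $\sT^r(h_t)\circ\sT^r(h_s)=\sT^r(h_{ts})$) and then reads off the weight vector field, whereas you go straight to the local form and infer the graded bundle structure from it; your extra paragraph on the zero loci to justify the diagram is more explicit than what the paper writes.
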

\begin{proof}
The vector field $\nabla_F^{(c)}$ is a weight vector field of the homogeneity structure $\sT^r(h_t)$, where $h_t$ is the homogeneity structure on $F$. The maps $\sT^r(h_t)$ define indeed a homogeneity structure on $\sT^rF$, as
$$\sT^r(h_t)\circ\sT^r(h_s)=\sT^r(h_t\circ h_s)=\sT^r(h_{ts})\,.$$
According to Theorem \ref{t3}, $\nabla_F^{(c)}$ commutes with $\nabla_{\sT^rF}$, so the two homogeneity structures on ${\sT^rF}$ are compatible. In local homogeneous coordinates $(x^i,x^j_\zm)$ in $\sT^rF$,
$$\nabla_F^{(c)}=\left(\sum_{i=1}^nw_ix^i\pa_{x^i}\right)^{(c)}=\sum_{\zm=1}^r\sum_{i=1}^nw_i\cdot x^i_\zm\,\pa_{x^i_\zm}$$
and
$$\nabla_{\sT^rF}=\sum_{\zm=1}^r\sum_{i=1}^n\zm\cdot x^i_\zm\,\pa_{x^i_\zm}\,,$$
so the lifted homogeneity structure is of degree $k$ with respect to $\nabla_F^{(c)}$ and of degree $r$ with respect to $\nabla_{\sT^rF}$.
\end{proof}
\section{Weighted structures}
To fix our attention, we concentrate in this section on graded bundles only, although most of the concepts and results work for $\Z$-graded bundles as well.

Roughly speaking, \emph{weighted structures} are geometric structures on graded bundles which are compatible with the homogeneity structure. What the compatibility means, we will make precise for a list of geometric structures using their higher lifts as natural examples. If the graded bundle is a vector bundle, the corresponding weighted structures we will call \emph{$\VB$-structures}. This concept of $\VB$-structures coincides with the already known  in the literature $\VB$-structures for Lie algebroids and Lie groupoids.
\subsection{Weighted tensor fields}
Motivated by the resuts of the previous section, we propose the following.
\begin{definition}\label{wt}
Let $K$ be a $(q,p)$-tensor field on a graded bundle $F\to M$ of degree $k$ with the weight vector field $\nabla_F$.
We call the tensor field $K$ \emph{compatible} with the homogeneity structure on $F$ if $\deg(K)=-(q-1)k$, i.e.
$$\Ll_{\nabla_F}(K)=-(q-1)k\cdot K\,.$$ In this case we call the structure $(F,\nabla_F,K)$ a \emph{weighted $K$-structure}.
\end{definition}
\noindent In particular, we get the following.
\begin{definition}
\
\begin{itemize}
\item A \emph{weighted Nijenhuis} manifold is a graded bundle $F$ equipped with a Nijenhuis tensor of degree $0$.
\item A \emph{weighted almost complex manifold} is a graded bundle $F$ equipped with a weighted almost complex structure, i.e. a $(1,1)$-tensor $N$ of degree $0$ such that $N\circ N=-I_{\sT F}$.
\item An \emph{weighted almost product manifold} is a graded bundle $F$ equipped with a weighted product structure, i.e. a $(1,1)$-tensor $N$ of degree $0$ such that $N\circ N=I_{\sT F}$.
\item An \emph{weighted almost tangent manifold} is a graded bundle $F$ equipped with a weighted tangent structure, i.e. a $(1,1)$-tensor $N$ of degree $0$ such that $N\circ N=0$.
\end{itemize}
We get weighted complex (resp., product, tangent) structures if $N$ is a Nijenhuis tensor.
\end{definition}
\begin{proposition}
If $A$ and $B$ are weighted multivector fields on a graded bundle $F$ of degree $k$, then the Schouten bracket $[A,B]_S$ is also weighted.
\end{proposition}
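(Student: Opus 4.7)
The plan is to deduce the statement from the fact that the Lie derivative $\Ll_X$ along any vector field $X$ is a graded derivation of the Schouten--Nijenhuis bracket,
\begin{equation}\label{eq:LieDerSchouten}
\Ll_X[A,B]_S \;=\; [\Ll_X A, B]_S + [A, \Ll_X B]_S.
\end{equation}
Once \eqref{eq:LieDerSchouten} is in place, specialising $X=\nabla_F$ and feeding in the weight hypotheses on $A$ and $B$ reduces the claim to a one-line bilinearity computation with the weights adding in the expected way.

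First, I would establish (or cite) \eqref{eq:LieDerSchouten}. The most conceptual route is naturality: for any diffeomorphism $\phi$ one has $\phi^*[A,B]_S = [\phi^*A,\phi^*B]_S$; differentiating this identity at $t=0$ along the flow $\Exp(tX)$ yields \eqref{eq:LieDerSchouten}. A direct check is also possible on decomposable multivectors using the explicit formula \eqref{Sch}, combined with the derivation property of $\Ll_X$ on wedge products and the Jacobi identity $\Ll_X[Y,Z]=[\Ll_X Y,Z]+[Y,\Ll_X Z]$ on vector fields; all the terms that arise regroup into $[\Ll_X A,B]_S+[A,\Ll_X B]_S$.

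Second, let $A$ be a weighted $q$-vector field and $B$ a weighted $p$-vector field on the graded bundle $F$ of degree $k$. By Definition~\ref{wt},
$$\Ll_{\nabla_F}A = -(q-1)k\cdot A, \qquad \Ll_{\nabla_F}B = -(p-1)k\cdot B.$$
Applying \eqref{eq:LieDerSchouten} with $X=\nabla_F$ and using bilinearity of $[\cdot,\cdot]_S$ in each slot gives
$$\Ll_{\nabla_F}[A,B]_S = -(q-1)k\cdot[A,B]_S - (p-1)k\cdot[A,B]_S = -(q+p-2)k\cdot[A,B]_S.$$
Since $[A,B]_S$ is a multivector field of rank $q+p-1$, the weighted condition from Definition~\ref{wt} demands exactly the coefficient $-\bigl((q+p-1)-1\bigr)k = -(q+p-2)k$, which is precisely what has just been computed. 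Hence $[A,B]_S$ is weighted.

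The only conceivable obstacle is justifying \eqref{eq:LieDerSchouten}, but this is entirely standard and intrinsic to the bracket; nothing else in the argument is specific to graded bundles beyond the fact that $\nabla_F$ is a globally defined smooth vector field on $F$, which is already guaranteed by the graded bundle structure.
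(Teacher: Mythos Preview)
Your proof is correct and follows essentially the same approach as the paper: both establish the derivation identity $\Ll_{\nabla_F}[A,B]_S=[\Ll_{\nabla_F}A,B]_S+[A,\Ll_{\nabla_F}B]_S$ and then plug in the weight hypotheses to match the degree $-(q+p-2)k$ required of a $(q+p-1)$-vector field. The only cosmetic difference is that the paper verifies the derivation identity by hand on decomposable multivectors using formula~\eqref{Sch} and the Jacobi identity for vector fields, whereas you invoke it as the general fact \eqref{eq:LieDerSchouten} (naturality under flows); your formulation has the minor advantage of covering non-decomposable $A,B$ directly without an implicit appeal to bilinearity.
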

\begin{proof}
Suppose $A=X_1\wedge\dots \wedge X_l$ is weighted of degree $(1-l)k$ and
$B=Y_1\wedge \dots \wedge Y_m$ is of degree $(1-m)k$. As for vector fields $X,Y,Z$ on a  manifold, we have
$$\Ll_Z[X,Y]=[Z,[X,Y]]=[[Z,X],Y]+[[X,[Z,Y]]=[\Ll_Z(X),Y]+[X,[\Ll_Z(Y)]]\,,$$
by Definition  \ref{Sch} of the Schouten bracket,
\beas &\Ll_{\nabla_F}([A,B]_S)=\\
&\Ll_{\nabla_F}\left(\sum_{i,j}(-1)^{i+j} [X_i,Y_j] \wedge X_1\wedge
\dots\wedge \widehat{X}_i \wedge \dots \wedge X_l\wedge
Y_1\wedge \dots \wedge \widehat{Y}_j\wedge \dots \wedge Y_m\right)=\\
&[\Ll_{\nabla_F}(A),B]_S+[A,\Ll_{\nabla_F}(B)]=(1-l+1-m)k\,[A,B]_S\,.
\eeas
The multivector field $[A,B]_S$ is a $(l+m-1)$-vector field, so it is weighted if and only if it is of degree
$(1-(l+m-1))k$, and we have just shown that it is true.

\end{proof}
\begin{proposition}
Let $\zm\in\zW^m(F)$ and $\zn\in\zW^l(F)$ be differential forms on a graded bundle $F$ of degree $k$ and let $A=\zm\ot X$ and $B=\zn\ot Y$ be weighted vector valued differential forms, i.e. they are of degree 0. Then, the Fr\"olicher-Nijenhuis bracket $[A,B]_{FN}$ is also weighted.
\end{proposition}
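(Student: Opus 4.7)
The plan is to show that $\Ll_{\nabla_F}[A,B]_{FN}=0$, which is precisely the weighted condition for a vector valued $(m+l)$-form (a $(1,m+l)$-tensor requires degree $-(1-1)k=0$). The cleanest route is to establish the general derivation property
\begin{equation*}
\Ll_Z[A,B]_{FN}=[\Ll_Z A,B]_{FN}+[A,\Ll_Z B]_{FN}
\end{equation*}
for any vector field $Z$, and then specialize to $Z=\nabla_F$. Since $A$ and $B$ are of degree $0$, both $\Ll_{\nabla_F}A$ and $\Ll_{\nabla_F}B$ vanish, so the right-hand side is $0$, which gives the desired conclusion.

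To justify the derivation property, I would appeal to the naturality of the Fr\"olicher-Nijenhuis bracket under diffeomorphisms: if $\zf_t$ denotes the flow of $Z$, then $\zf_t^*[A,B]_{FN}=[\zf_t^*A,\zf_t^*B]_{FN}$; differentiating at $t=0$ yields the derivation identity. Any weight vector field $\nabla_F$ on a graded bundle $F$ is complete (its flow in homogeneous coordinates is given by rescaling $(x^i)\mapsto(\rme^{t w_i}x^i)$), so this argument applies with no regularity issues.

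As a direct double-check, one can also compute using formula (\ref{FN1}) term-by-term. Writing $a=\deg(\zm)$, $b=\deg(X)$, $c=\deg(\zn)$, $d=\deg(Y)$ with $a+b=0$ and $c+d=0$, and using the elementary weight-additivity rules
\begin{equation*}
\deg(\za\we\zb)=\deg(\za)+\deg(\zb),\quad \deg([X,Y])=\deg(X)+\deg(Y),
\end{equation*}
\begin{equation*}
\deg(\Ll_X\zn)=\deg(X)+\deg(\zn),\quad \deg(i_X\zn)=\deg(X)+\deg(\zn),\quad \deg(\rmd\zn)=\deg(\zn),
\end{equation*}
each of the five summands on the right-hand side of (\ref{FN1}) has degree $(a+b)+(c+d)=0$. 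The three auxiliary rules follow respectively from $[\Ll_{\nabla_F},\Ll_X]=\Ll_{[\nabla_F,X]}$, $[\Ll_{\nabla_F},i_X]=i_{[\nabla_F,X]}$, and $[\Ll_{\nabla_F},\rmd]=0$, combined with the hypothesis that $\Ll_{\nabla_F}X=b\,X$ and $\Ll_{\nabla_F}\zn=c\,\zn$.

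There is no real obstacle here; the only point that needs mild care is the bookkeeping of degrees under $\Ll$, $i$, and $\rmd$, but this is purely formal once one notes that these operators commute with $\Ll_{\nabla_F}$ up to a correction by $[\nabla_F,\cdot]$ which itself acts diagonally on homogeneous tensors. The statement then extends by bilinearity of $[\cdot,\cdot]_{FN}$ to arbitrary weighted vector valued forms, not just simple ones of the form $\zm\ot X$.
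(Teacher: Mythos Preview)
Your proposal is correct and essentially matches the paper's approach: both establish the derivation identity $\Ll_{\nabla_F}[A,B]_{FN}=[\Ll_{\nabla_F}A,B]_{FN}+[A,\Ll_{\nabla_F}B]_{FN}$ and conclude from $\Ll_{\nabla_F}A=\Ll_{\nabla_F}B=0$. The paper derives this identity by expanding (\ref{FN1}) and using exactly the commutator rules $[\Ll_{\nabla_F},\Ll_X]=\Ll_{[\nabla_F,X]}$, $[\Ll_{\nabla_F},i_X]=i_{[\nabla_F,X]}$, $[\Ll_{\nabla_F},\rmd]=0$ you list; your flow-naturality argument is a slightly slicker route to the same identity, and your term-by-term degree count is a direct reformulation of the paper's computation.
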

\begin{proof}
According to (\ref{FN1}),
\beas &\Ll_{\nabla_F}([A,B]_{FN})=\\
& \Ll_{\nabla_F}\left(\zm\wedge \zn \otimes[X,Y] + \zm\wedge \Ll_X\zn\otimes Y -\Ll_Y\zm \wedge \zn \otimes
X\right)+ \\ &(-1)^m\Ll_{\nabla_{F}}\left(\xd\zm \wedge i_X\zn \otimes Y + i_Y\zm\wedge \xd\zn \otimes X\right)\,.
                \eeas
With the use of the identities
\beas \Ll_{\nabla_F}\Ll_X\zn&=&\Ll_X\Ll_{\nabla_F}\zn+\Ll_{(\Ll_{\nabla_F}(X))}\zn\,,\\
\Ll_{\nabla_F}\xd\zn&=&\xd\Ll_{\nabla_F}\zn\,,\\
  \Ll_{\nabla_F}i_X\zn&=&i_X\Ll_{\nabla_F}\zn+i_{(\Ll_{\nabla_F}(X))}\zn\,,
\eeas
and direct calculations, we get
$$\Ll_{\nabla_F}([A,B]_{FN})=[\Ll_{\nabla_F}(A),B]_{FN}+[A,\Ll_{\nabla_F}(B)]_{FN}\,.$$
Since $A$ and $B$ are of degree $0$, then  $[A,B]_{FN}$ is also of degree $0$, thus weighted.

\end{proof}
\begin{proposition}
Let $\zm\in\zW^m(F)$ and $\zn\in\zW^l(F)$ be differential forms on a graded bundle $F$ of degree $k$ and let $A=\zm\ot X$ and $\zn\ot Y$ be weighted vector-valued differential forms. Then the Nijenhuis-Richardson bracket $[A,B]_{NR}$ is also weighted.
\end{proposition}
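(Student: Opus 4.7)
The plan is to mirror the strategy used in the preceding proof for the Fr\"olicher--Nijenhuis bracket: establish that the Lie derivative $\Ll_{\nabla_F}$ acts as a derivation on the Nijenhuis--Richardson bracket, and then read off the degree from the fact that $A$ and $B$ are of degree $0$. Since $[A,B]_{NR}$ is again a vector-valued form (one contravariant index), Definition \ref{wt} tells us that ``weighted'' in this case simply means of degree $0$, so it suffices to show $\Ll_{\nabla_F}([A,B]_{NR})=0$.

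First, I would recall the defining formula
$$[\zm\ot X,\zn\ot Y]_{NR}=\zm\we i_X\zn\ot Y+(-1)^m i_Y\zm\we\zn\ot X$$
and apply $\Ll_{\nabla_F}$ termwise, using the elementary Leibniz rules
$$\Ll_{\nabla_F}(\za\we\zb)=\Ll_{\nabla_F}(\za)\we\zb+\za\we\Ll_{\nabla_F}(\zb),\qquad \Ll_{\nabla_F}(\zg\ot Z)=\Ll_{\nabla_F}(\zg)\ot Z+\zg\ot\Ll_{\nabla_F}(Z),$$
together with the commutation identity
$$\Ll_{\nabla_F}(i_X\zn)=i_{\Ll_{\nabla_F}(X)}\zn+i_X\Ll_{\nabla_F}(\zn).$$
Expanding everything and regrouping the resulting terms according to the pattern of the bracket, I expect to obtain the derivation identity
$$\Ll_{\nabla_F}([A,B]_{NR})=[\Ll_{\nabla_F}(A),B]_{NR}+[A,\Ll_{\nabla_F}(B)]_{NR}.$$

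Once this identity is in hand, the conclusion is immediate: both $A=\zm\ot X$ and $B=\zn\ot Y$ are weighted vector-valued forms, hence of degree $0$, so $\Ll_{\nabla_F}(A)=0$ and $\Ll_{\nabla_F}(B)=0$, and therefore $\Ll_{\nabla_F}([A,B]_{NR})=0$. This is precisely the compatibility required by Definition \ref{wt} for a $(1,1)$-type object of rank $q=1$, so $[A,B]_{NR}$ is weighted.

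The only step requiring any care is the regrouping that yields the derivation identity, since the two terms in $[A,B]_{NR}$ are not symmetric in $A,B$ and the sign $(-1)^m$ shifts when $\zm$ is replaced by $\Ll_{\nabla_F}(\zm)$ (which, as $\Ll_{\nabla_F}$ is of degree $0$ on forms, preserves the form-degree, so the sign is in fact unaffected). This confirms that the bookkeeping goes through unambiguously, and the main potential pitfall---namely that $\Ll_{\nabla_F}$ might fail to commute with contraction---is resolved by the identity above, which is precisely the additional derivation rule one needs. By linearity of both sides in $A$ and $B$, the argument extends from simple tensors to the general case.
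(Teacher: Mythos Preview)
Your proposal is correct and follows essentially the same approach as the paper: apply $\Ll_{\nabla_F}$ to the defining formula of the Nijenhuis--Richardson bracket, use the Leibniz rules and the identity $\Ll_{\nabla_F}(i_X\zn)=i_{\Ll_{\nabla_F}(X)}\zn+i_X\Ll_{\nabla_F}(\zn)$ to obtain the derivation identity $\Ll_{\nabla_F}([A,B]_{NR})=[\Ll_{\nabla_F}(A),B]_{NR}+[A,\Ll_{\nabla_F}(B)]_{NR}$, and conclude from $\deg A=\deg B=0$. Your write-up is in fact more explicit than the paper's, which simply asserts the derivation identity ``by direct calculations'' after noting the analogy with the Fr\"olicher--Nijenhuis case.
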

\begin{proof}
The proof is completely analogous to the proof for the Fr\"olicher-Nijenhuis bracket. We have
\[
\Ll_{\nabla_f}([\zm\ot X,\zn\ot Y]_{NR})=\Ll_{\nabla_f}(\zm\we i_X\zn\ot Y+(-1)^ki_Y\zm\we\zn\ot X)\,.
\]	
By direct calculations we get
$$\Ll_{\nabla_F}([A,B]_{NR})=[\Ll_{\nabla_F}(A),B]_{NR}+[A,\Ll_{\nabla_F}(B)]_{NR}\,.$$
Since $A$ and $B$ are of degree $0$,  $[A,B]_{FN}$ is then of degree $0$, thus weighted.
	
\end{proof}
\subsection{Weighted vector bundles and distributions}

\begin{definition} A \emph{weighted vector bundle of degree $k$} is a vector bundle $E\to M$ equipped additionally
	with a homogeneity structure $h$ of degree $k$ such that $h_t:E\to E$ are vector bundle morphisms for all $t\in\R$. In particular, it means that  $N=h_0(E)$ is a vector  subbundle of $E\to M$. We denote a weighted vector bundle with the couple $(E,h_t)$.
\end{definition}
\begin{proposition}[\cite{Grabowski:2009}] For a homogeneity structure on a vector bundle $E\to M$, the maps  $h_t:E\to E$ are vector bundle morphisms for all $t\in\R$ if and only if $h$ commutes with the homogeneity structure $\tilde h$ defining the vector bundle structure:
	$$h_t\circ \tilde h_s=\tilde h_s\circ h_t\,,$$
	for all $t,s\in\R$.
\end{proposition}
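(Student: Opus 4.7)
The plan is to reduce the statement to the characterization of vector bundles (and their morphisms) via homogeneity structures of degree one, recalled from \cite{Grabowski:2009} and already used several times in the paper. Recall that the vector bundle structure $E\to M$ is entirely encoded by its scalar multiplication, which is a smooth monoid action $\tilde h:\R\times E\to E$ of degree one; and a smooth map $\zf:E_1\to E_2$ between vector bundles is a vector bundle morphism if and only if it intertwines the associated scalar multiplications, that is,
\[
\zf\circ\tilde h^1_s=\tilde h^2_s\circ\zf\qquad\text{for all }s\in\R.
\]
The key idea is to apply this characterization to $\zf=h_t:E\to E$, for each fixed $t\in\R$, with $\tilde h^1=\tilde h^2=\tilde h$.

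For the forward direction, assume that $h_t:E\to E$ is a vector bundle morphism for every $t\in\R$. Fix $t$ and apply the characterization to $\zf=h_t$: this yields $h_t\circ\tilde h_s=\tilde h_s\circ h_t$ for all $s\in\R$. Letting $t$ range over $\R$ gives the commutation relation $h_t\circ\tilde h_s=\tilde h_s\circ h_t$ for all $s,t\in\R$. Conversely, assume this commutation. Then for each fixed $t$, the smooth self-map $h_t:E\to E$ intertwines $\tilde h$ with itself, so by the cited characterization it is a vector bundle endomorphism covering some smooth base map; in particular it is fibrewise linear. Hence $h_t$ is a vector bundle morphism for every $t\in\R$.

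The only substantive ingredient is the characterization of vector bundle morphisms as intertwiners of the degree-one homogeneity structure; once this is invoked, both implications are immediate tautologies, so no genuine obstacle arises. The small point worth noticing is that the commutation relation must be verified for all $s\in\R$, including $s=0$ and $s<0$: this is automatic since $\tilde h$ is, by definition, an action of the whole monoid $(\R,\cdot)$, and the sign and zero cases correspond respectively to fibrewise negation and the projection onto the zero section $N=h_0(E)\subset E$, both of which must be preserved by any vector bundle morphism.
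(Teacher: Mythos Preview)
Your proposal is correct and follows exactly the approach the paper intends: the paper does not give its own proof of this proposition but cites it from \cite{Grabowski:2009}, precisely because it is an immediate consequence of the characterization of vector bundle morphisms as smooth maps intertwining the degree-one homogeneity structures, which the paper recalls in the introduction and in the definition of morphisms of homogeneity structures. Your argument is the natural unpacking of that citation, and there is nothing to add.
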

The above proposition shows that weighted vector bundles are just $\GrL$-bundles.

\begin{definition}\

\begin{itemize}

\item A (smooth) distribution $D\subset\sT F$ on a graded bundle $(F,h)$ covering a submanifold $F_0\subset F$, is a \emph{weighted distribution} if it is a graded subbundle of the tangent bundle $\sT F$ with the lifted homogeneity structure  $\dt h$, i.e.
$$\sT(h_t)(D)\subset D\quad\text{for all}\quad t\in\R\,.$$
\item A \emph{weighted foliation} is a weighted distribution which is involutive.
\item A \emph{weighted fibration} is a fibration $\zt:F\to N$ such that the vertical foliation (foliation into fibers of $\zt$) is weighted.
\end{itemize}
\end{definition}
\begin{theorem}\label{t5}
\
\begin{itemize}
\item Let $h_0:F\to M$ be a graded bundle and let $\zt_F:\sT F\to F$ be the canonical projection. Assume additionally that  $D\subset\sT F$ is a weighted distribution  covering a submanifold $F_0$ of $F$, $\zt_F(D)=F_0$. Then,
\be\label{t4a} h_t\circ \zt_F=\zt_F\circ \sT h_t\ee
 and $F_0$ is a graded subbundle of $F$.
\item Let $\mathcal{F}$ be a foliation on a graded bundle $h_0:F\to M$. Then, $\mathcal{F}$ is weighted if and only if $h_t$ maps leaves of $\mathcal{F}$ into leaves, for all $t\in\mathbb R$.
\item Suppose that the fibration $\zt:F\to N$ is weighted. Then, the  homogeneity structure $h$ on $F$ induces a homogeneity structure $\zf$ on $N$ such that $\zf_t\circ\zt=\zt\circ h_t$, $N_0=\zf_0(N)$ is a submanifold in $N$
    and the restriction of $\zt$ to $M=h_0(F)$ gives a fibration $\zt:M\to N_0$. That is, $N$ is canonically a graded bundle and $M$ is canonically a fiber bundle.

\end{itemize}
\end{theorem}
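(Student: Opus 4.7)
The three parts of Theorem~\ref{t5} share a common backbone: the naturality identity relating $h_t:F\to F$ to its tangent lift $\sT h_t:\sT F\to \sT F$ via the tangent projection $\zt_F:\sT F\to F$. Once that identity is in hand, the remaining content of each part follows by combining the weighted hypothesis $\sT h_t(D)\subset D$ with the appropriate standard characterization: graded subbundles as $h$-invariant submanifolds (as already noted for vector bundles in the Introduction), foliations as maximal integrable distributions, and smooth $(\R,\cdot)$-actions as homogeneity structures yielding graded bundles via Grabowski--Rotkiewicz.

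For the first part, the identity $h_t\circ\zt_F=\zt_F\circ\sT h_t$ is nothing but naturality of the tangent projection applied to the smooth self-map $h_t$, valid for any smooth map. Applying $\zt_F$ to the weighted inclusion $\sT h_t(D)\subset D$ one then gets
$$h_t(F_0)=h_t(\zt_F(D))=\zt_F(\sT h_t(D))\subset\zt_F(D)=F_0\,,$$
so $F_0$ is $h_t$-invariant for every $t\in\R$. Since $D$ covers a submanifold by hypothesis, $F_0$ is a submanifold of $F$, and a submanifold invariant under the whole homogeneity action is precisely a graded subbundle: the restricted action $h|_{F_0}$ is again a homogeneity structure on $F_0$, so $F_0$ inherits a graded bundle structure compatible with that of $F$.

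For the second part, set $D=\sT\mathcal{F}$. For the forward implication, assume $\sT h_t(D)\subset D$. Given a smooth curve $\zg$ contained in a leaf $L$, the curve $h_t\circ\zg$ has all velocity vectors in $D$, so it lies in an integral submanifold of $D$; since $h_t(L)$ is connected and leaves are the maximal connected integral submanifolds, $h_t(L)$ is contained in a single leaf. The converse is immediate: if $h_t$ sends each leaf into a leaf, then its differential sends the tangent space of a leaf (which is exactly $D$ at that point) into the tangent space of the image leaf, so $\sT h_t(D)\subset D$.

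The third part is where the main work lies. By part~(ii) applied to the vertical foliation of $\zt:F\to N$, $h_t$ maps each fibre of $\zt$ (taken connected, as is conventional for a fibration) into a single fibre, so the formula $\zf_t(n):=\zt(h_t(p))$ with $p\in\zt^{-1}(n)$ is independent of the choice of $p$ and hence well defined. The action property $\zf_t\circ\zf_s=\zf_{ts}$ follows from the corresponding property of $h$, and the intertwining $\zf_t\circ\zt=\zt\circ h_t$ holds by construction. Smoothness of $(t,n)\mapsto\zf_t(n)$ is checked by choosing a local smooth section $\zs$ of the submersion $\zt$ and writing $\zf_t(n)=\zt(h_t(\zs(n)))$. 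Thus $\zf$ is a smooth $(\R,\cdot)$-action on $N$, i.e.\ a homogeneity structure, so by the Grabowski--Rotkiewicz theorem $N$ is canonically a graded bundle. The set $N_0=\zf_0(N)$ is a smooth submanifold as the image of the smooth projection $\zf_0$ (cf.\ \cite[Theorem 1.13]{Kolar:1996}), and the identity $\zt(M)=\zt(h_0(F))=\zf_0(\zt(F))=N_0$ shows that $\zt$ restricts to a map $M\to N_0$; local triviality of this restriction is inherited from that of $\zt$ by intersecting local trivialisations with $M$. The key subtlety of the whole theorem is precisely this well-definedness of $\zf_t$ at the set-theoretic level -- i.e.\ that $h_t$ sends each individual fibre into one and the same fibre, not merely into the union of all fibres -- which is exactly what the leaf-preserving argument of part~(ii) supplies.
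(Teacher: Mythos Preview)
Your proof is correct and follows essentially the same approach as the paper's: naturality of $\zt_F$ for part one, the curve-in-a-leaf argument for part two, and inducing $\zf_t$ on $N$ from the fibre-preserving property for part three. The only notable difference is in the final step of part three: where you assert that local triviality of $\zt|_M:M\to N_0$ is ``inherited from that of $\zt$ by intersecting local trivialisations with $M$,'' the paper makes this explicit by writing a local trivialisation as $F\simeq N\times\cF_0$, observing that $h_0$ then takes the product form $(\zf_0,\bar h_0)$ so that $M\simeq N_0\times M_0$ locally with $\zt|_M$ the projection onto the first factor. Your formulation is adequate but the paper's product decomposition makes the fibre structure of $M\to N_0$ more transparent; conversely, your use of a local section to verify smoothness of $\zf$ is cleaner than the paper's tacit treatment of that point.
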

\begin{proof}
\
\begin{itemize}
\item Let us take $v_p\in D_p$, $p\in F$. As $D$ is invariant with respect to $\sT h_t$, $\sT h_t(v_p)$ belongs to $D$ and $\zt_F(\sT h_t(v_p))=h_t(p)$ that is equivalent to (\ref{t4a}). Of course, (\ref{t4a}) implies trivially that $F_0$ is invariant with respect to $h_t$, i.e. it must be a graded subbundle of $F$.
\item Suppose $\cF$ is weighted.  Let $D=\sT\mathcal{F}$ be the corresponding involutive distribution and let $\zt_F:\sT F\to F$ be the canonical projection. Let us take a smooth curve $\zg:\R\to F$ which completely belongs to one leaf, say $\cO$, i.e. $\zg(s)\in\cO$ for all $s$.
Let $\dot\zg:\R\to\sT\cO$ be the tangent prolongation of $\zg$. Of course, $\zt_F(\sT h_t(\dot\zg(s)))=h_t(\zg(s))$
and, as $\cF$ is weighted, $\sT h_t(D)\subset D$, so $\sT h_t((\dot\zg(s))\in D_{h_t(\zg(s))}$. This implies that the curve $s\mapsto(\sT h_t(\dot\zg(s)))\in\sT F$ lies in $\sT\cF$. But a curve on $F$ whose tangent vectors at each point are tangent to leaves of $\cF$ must belong to one leaf.

Conversely, if $h_t$ maps leaves into leaves, then $\sT h_t$ maps vectors of $D=\sT\cF$ into vectors belonging to $\sT\cF$; the distribution $D$ is weighted and so the foliation is weighted.
\item First, note that $N$ is not a submanifold of $F$. However, as diffeomorphism $h_t$ maps fibers into fibers, it induces a smooth map $\zf_t:N\to N$ such that $h_t(\cF_x)\subset\cF_{\zf_t(x)}$. Here $\cF_x$ is the fiber of $\zt$ over the point $x\in N$. It is easy to see that $\zf_t\circ\zf_s=\zf_{ts}$, so that $\zf$ is a homogeneity structure on $N$ over $\zf_0(N)=N_0$, induced by $h$,  and $\zf_t\circ\zt=\zt\circ h_t$, for all $t\in \R$.
We conclude also that $\zt$ restricted to $M$ is a smooth surjection onto $N_0$. Indeed,
$$\zt(M)=\zt\circ h_0(F)=\zf_0\circ\zt(F)=\zf_0(N)=N_0\,.$$
The appropriate diagram looks as follows.
\begin{equation}\label{cu:4} \xymatrix{
F\ar[rr]^{h_0} \ar[d]^{\zt} && M\ar[d]^{\zt_{|M}} \\
N\ar[rr]^{\zf_0} && N_0. }\,
\end{equation}
Passing to local trivializations of $\zt$ we can assume that $\zt:F\to N$ is trivial, i.e. $F=N\ti\cF_0$. As
$h_t:F\to F$ maps fibers into fibers, it induces a homogeneity structure $\bar h_t$ on $\cF_0$ with $M_0=\bar h_0(\cF_0)$ as the base and
$$h_t(p,q)=(\zf_t(p),\bar h_t(q)).$$
Here $(p,q)\in N\ti\cF_0=F$.
The diagram (\ref{cu:4}) takes the form
$$\xymatrix{
F=N\ti\cF_0\ar[rr]^{h_0} \ar[d]^{\zt} && M=N_0\ti M_0\ar[d]^{\zt_{|M}} \\
N\ar[rr]^{\zf_0} && N_0. }\,
$$
Because
$$M=h_0(F)=h_0(N\ti\cF_0)=(\zf_0(N)\ti\bar h_0(\cF_0))=N_0\ti M_0$$
and $\zt_{|M}:N_0\ti M_0\to N_0$ is the obvious projection, $\zt_{|M}:M\to N_0$ is a fibration.
\end{itemize}

\end{proof}

\begin{proposition}\label{p4} (cf. \cite{Wamba:2012})
Let $D$ be a distribution of rank $k$ on a manifold $M$ and $D^{(r)}\subset\sT\sT^rM$ be a generalized distribution generated by all vector fields $X^{(\zl)}$, $\zl=0,\dots,r$, for vector fields $X$ on $M$ which belong to $D$. Then, $D^{(r)}$ is a weighted distribution of rank $(r+1)k$ on $\sT^rM$. If the distribution $D$ is involutive, then the distribution $D^{(r)}$ is involutive as well.
\end{proposition}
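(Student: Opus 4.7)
The plan is to establish three claims in sequence: that $D^{(r)}$ has constant rank $(r+1)k$, that it is preserved by the tangent-lifted homogeneity structure on $\sT^rM$, and that it is involutive whenever $D$ is. Only the rank claim requires real work; the latter two follow almost formally from Theorem \ref{t3} and the bracket identity \eqref{Lb}. I would work locally, picking a frame $X_1, \ldots, X_k$ of $D$ on a coordinate chart $(U, y^i)$ of $M$ and using the induced coordinates $(y^i_\zn)_{\zn=0}^r$ on $\sT^r U \subset \sT^rM$. The core computational input is the explicit form of the lifts, which I would derive by combining $(\pa_{y^i})^{(\zl)} = \pa_{y^i_{r-\zl}}$ with the Leibniz rule \eqref{e0}:
$$X_a^{(\zl)} \;=\; \sum_{i}\, \sum_{\zn = r - \zl}^{r} (X_a^i)^{(\zn - r + \zl)}\, \pa_{y^i_\zn}\,.$$
The crucial features of this expression are that the component along $\pa_{y^i_\zn}$ is nonzero only for $\zl \ge r - \zn$ and that, in the extreme case $\zl = r - \zn$, the coefficient is simply $X_a^i(p)$ at the base point $p = \zt^r_M(q)$.

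Exploiting this leading-term structure, I would prove pointwise linear independence of the $(r+1)k$ vector fields $\{X_a^{(\zl)}\}$ at each $q \in \sT^r U$ by induction on $\zn = 0, 1, \ldots, r$. Assume $\sum_{a,\zl} c_{a,\zl}\, X_a^{(\zl)}(q) = 0$ with $c_{a,\zl} \in \R$. Reading off the $\pa_{y^i_0}$-components forces $\sum_a c_{a, r} X_a^i(p) = 0$ for every $i$; linear independence of $X_1(p), \ldots, X_k(p)$ in $D_p$ then gives $c_{a, r} = 0$. At the $\zn$-th step, the induction hypothesis kills all contributions with $\zl > r - \zn$, leaving $\sum_a c_{a, r - \zn} X_a^i(p) = 0$, hence $c_{a, r - \zn} = 0$. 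For the matching upper bound, every local section $Y$ of $D$ is of the form $\sum_a f_a X_a$ with $f_a \in \Ci(U)$, and \eqref{e0} expresses each $Y^{(\zl)}$ as a $\Ci(\sT^r U)$-linear combination of the $X_a^{(\zl')}$'s, so the generating values of $D^{(r)}$ at $q$ already lie in the span of the $X_a^{(\zl)}(q)$. This yields $\dim D^{(r)}_q = (r+1)k$ at every point.

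With the rank in hand, the remaining two claims I would dispatch as follows. By Theorem \ref{t3} each $X^{(\zl)}$ has degree $\zl - r$, so $\Ll_{\nabla_{\sT^rM}} X^{(\zl)} = (\zl - r) X^{(\zl)}$; integrating this along the flow of $\nabla_{\sT^rM}$ yields $(h^{\sT^rM}_t)_* X^{(\zl)} = t^{r - \zl} X^{(\zl)}$ for $t > 0$, so each generator is merely rescaled, whence $\sT h_t^{\sT^rM}$ preserves $D^{(r)}$ for $t > 0$ and by continuity for all $t \in \R$. For involutivity, formula \eqref{Lb} gives $[X^{(\zl)}, Y^{(\zm)}] = [X, Y]^{(\zl + \zm - r)}$, which lies in $D^{(r)}$ whenever $[X, Y] \in D$ (the right-hand side being zero by convention when $\zl + \zm - r \notin \{0, \ldots, r\}$), and general brackets involving $\Ci(\sT^rM)$-multiples reduce to this by the Leibniz rule. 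The main obstacle is really the bookkeeping in the linear-independence induction: once one notices that the $\pa_{y^i_\zn}$-component of $X_a^{(\zl)}$ degenerates to $X_a^i(p)$ precisely when $\zl = r - \zn$, everything else is essentially automatic.
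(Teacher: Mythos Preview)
Your proof is correct and follows essentially the same strategy as the paper: exploit the triangular structure of the lifts $X_a^{(\zl)}$ with respect to the coordinate vector fields $\pa_{y^i_\zn}$ to peel off one level at a time and reduce to linear independence of the frame $X_1,\dots,X_k$ at the base point. The only cosmetic difference is that the paper packages the leading-term observation into an auxiliary lemma (Lemma~\ref{lema}, via rectification of a single vector field) and phrases the induction using the projections $\zt^r_\zm:\sT^rM\to\sT^\zm M$, whereas you read off the $\pa_{y^i_\zn}$-components directly in coordinates; the weighted and involutivity parts are handled identically.
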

\begin{lemma}\label{lema}
Let $X$ be a vector field on $M$,  $X(x)\ne 0$. Then, the vectors $\{ X^{(0)}(y),\dots,X^{(r)}(y)\}$ are linearly independent at each $y\in\sT^rM$ which projects to $x$ under the canonical projection $\zt_M^r:\sT^rM\to M$.
In particular, all vectors $X^{(\zn)}(y)$ are different from $0$.
Moreover, the natural projection $\zt^r_\zm:\sT^rM\to\sT^\zm M$, where $\zm\le r$, projects $X^{(\zl)}$ to zero
if only $\zm<r-\zl$, and to the vector field  $X^{(\zm)}_{\sT^{\zm}M}$ on $\sT^{\zm}M$ if $\zm=r-\zl$,  where $X^{(\zm)}_{\sT^{\zm}M}$ is the complete lift of $X$ to $\sT^{\zm}M$.
\end{lemma}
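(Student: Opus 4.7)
The plan is to work entirely in local coordinates using the explicit formula for $X^{(\lambda)}$ already established in the paper. Choose coordinates $(x^i)$ around $x \in M$ and write $X = \sum_i a_i \partial_{x^i}$ with some $a_{i_0}(x) \neq 0$. Combining the generalized Leibniz rule $(fX)^{(\lambda)} = \sum_{\mu=0}^\lambda f^{(\mu)} X^{(\lambda-\mu)}$ with the identity $(\partial_{x^i})^{(\lambda)} = \partial_{x^i_{r-\lambda}}$ from Theorem \ref{t6} gives
\[
X^{(\lambda)} \;=\; \sum_i \sum_{\nu = r-\lambda}^{r} a_i^{(\nu+\lambda-r)}\, \partial_{x^i_\nu},
\]
where $a_i^{(\sigma)}$ depends only on $(x^j_0,\ldots,x^j_\sigma)$ and reduces to $a_i$ when $\sigma = 0$. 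The decisive structural feature is that the lowest coordinate index occurring in $X^{(\lambda)}$ is exactly $r - \lambda$, with coefficient $a_i(x)$ at any $y$ above $x$.

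For linear independence, I would suppose $\sum_{\lambda=0}^r c_\lambda X^{(\lambda)}(y) = 0$ and peel off coefficients one level at a time. The $\partial_{x^{i_0}_0}|_y$ component receives a contribution only from $\lambda = r$ (for smaller $\lambda$ the sum starts at $\nu \geq 1$), yielding $c_r a_{i_0}(x) = 0$, so $c_r = 0$. Inductively, after eliminating $c_r, c_{r-1}, \ldots, c_{r-\nu+1}$, the $\partial_{x^{i_0}_\nu}|_y$ slot receives its unique surviving contribution from $\lambda = r - \nu$, producing $c_{r-\nu} a_{i_0}(x) = 0$ and hence $c_{r-\nu} = 0$. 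The nonvanishing of each $X^{(\nu)}(y)$ then follows from the same observation: its lowest-index term $\sum_i a_i(x)\partial_{x^i_{r-\nu}}|_y$ is already nonzero.

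For the projection claim, I use that $\tau^r_\mu$ in coordinates merely forgets $x^i_{\mu+1},\ldots,x^i_r$, so $(\tau^r_\mu)_*$ annihilates $\partial_{x^i_\nu}$ for $\nu > \mu$. Applying this to the formula for $X^{(\lambda)}$: if $\mu < r - \lambda$, then every surviving index satisfies $\nu \geq r - \lambda > \mu$ and the pushforward is zero. If $\mu = r - \lambda$, only the single summand at $\nu = \mu$ survives, with coefficient $a_i^{(0)} = a_i$, which is pulled back from $M$ and hence descends to $\sT^\mu M$. The resulting vector field $\sum_i a_i \partial_{x^i_\mu}$ on $\sT^\mu M$ is then identified with the complete lift $X^{(\mu)}_{\sT^\mu M}$ produced by the same lift formula applied with the ambient bundle of degree $\mu$.

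The main obstacle is bookkeeping rather than conceptual difficulty: one must carefully track the index range defining $X^{(\lambda)}$ and verify that the surviving coefficients $a_i^{(\sigma)}$ depend only on coordinates preserved by $\tau^r_\mu$ (equivalently $\sigma = \nu + \lambda - r \leq \mu$), so that the pushforward is a well-defined vector field on $\sT^\mu M$ rather than merely a family of tangent vectors. Both verifications are immediate consequences of the explicit formula, so the proof is essentially a direct coordinate computation driven by the observation that the "leading" index $r - \lambda$ of $X^{(\lambda)}$ uniquely identifies the lift.
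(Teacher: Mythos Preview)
Your approach differs from the paper's: the paper invokes the rectification theorem to choose coordinates with $X=\partial_{x^1}$, whence $X^{(\lambda)}=\partial_{x^1_{r-\lambda}}$ and every assertion is immediate. You instead work in arbitrary coordinates via the explicit lift formula and exploit the triangular structure (the lowest index $r-\lambda$ appearing in $X^{(\lambda)}$). Both are legitimate; rectification is shorter, while your method avoids appealing to the flowbox theorem and makes the mechanism transparent. Your linear-independence argument and your computation of the pushforward under $\tau^r_\mu$ are correct.

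There is, however, a genuine error in your final identification. When $\mu=r-\lambda$ you correctly obtain $\sum_i a_i\,\partial_{x^i_\mu}$ on $\sT^\mu M$, but this is \emph{not} the complete lift $X^{(\mu)}_{\sT^\mu M}$. Applying your own formula on $\sT^\mu M$ (with ambient degree $\mu$) gives
\[
X^{(\mu)}_{\sT^\mu M}=\sum_i\sum_{\sigma=0}^{\mu} a_i^{(\sigma)}\,\partial_{x^i_\sigma},
\]
which has terms at every level $0\le\sigma\le\mu$, not just $\sigma=\mu$. What you have actually computed is the $0$-lift $X^{(0)}_{\sT^\mu M}=\sum_i a_i\,\partial_{x^i_\mu}$. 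The same discrepancy is visible in the paper's rectified case: for $X=\partial_{x^1}$ the pushforward is $\partial_{x^1_\mu}$, whereas the complete lift to $\sT^\mu M$ is $\partial_{x^1_0}$. So your computation is right and your labeling is wrong; in fact this reveals that the phrase ``complete lift'' in the lemma statement is itself a slip (it should read $X^{(0)}_{\sT^\mu M}$), and the paper's sketch via rectification is too terse to catch it. For the application in Proposition~\ref{p4} only the nonvanishing of the projected vector matters, and that is unaffected.
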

\begin{proof}
It is well known that for any vector field $X$ on $M$ which does not vanish at $x\in M$ there is a neighborhood $U$ of $x$ and coordinates $(x^i)$ on $U$ in which $X$ is rectified, i.e. takes the form $X=\pa_{x^1}$. Then, according to Theorem \ref{t6}, in the induced coordinates on $\sT^rM$ we have $X^{(\zl)}=\pa_{x^1_{(r-\zl)}}$, $\zl=0,\dots,r$ and the proof is complete.
\end{proof}
\begin{proof}[Proof of Proposition \ref{p4}]
Assume that $D$ is, locally in $U\subset M$, generated by vector fields $X_j$ on $M$, $j=1,\dots,k$,  linearly independent at each $x\in U$. Hence, any vector field $X$ belonging to $D$ is locally of the form $\sum_jf_jX_j$. But, according to Theorem (\ref{e0}),
$$(f_j\cdot X_j)^{(\zl)}=\sum_{\zm=0}^\zl f_j^{(\zm)}\,X_j^{(\zl-\zm)}$$
is again a combination of vector fields
\be\label{e6} X_j^{(\zn)}\,,\ \zn=0,\dots,r,\quad\text{and}\quad j=1,\dots,k\,,\ee
which implies that $D^{(r)}$ is locally generated in $(\zt_M^r)^{-1}(U)$ by vector fields (\ref{e6}), where $\zt^r_M:\sT^rM\to M$ is the graded bundle projection, so $D^{(r)}$ has rank $\le k(r+1)$.
On the other hand, the vector fields (\ref{e6}) are linearly independent at each point $y$ of $(\zt_M^r)^{-1}(U)$.
Indeed, suppose that
$$X(y)=\sum_{\zn,j}a^j_\zn X_j^{(\zn)}(y)=0\,,\quad\text{where}\quad a^j_\zn\in\R\,.$$
Let $\zn_0$ be the highest $\zn$ for which at least one of $a^j_\zn$ is non-zero.
Then, according to Lemma \ref{lema}, $\zt^r_{\zn_0}:\sT^rM\to\sT^{\zn_0} M$ projects $X(y)$ to
$$\sum_{j}a^j_{\zn_0} X_j^{(\zn_0)}(y)=0\,.$$
But
$$\sum_{j}a^j_{\zn_0} X_j^{(\zn_0)}(y)=\left(\sum_{j}a^j_{\zn_0} X_j\right)^{(\zn_0)}(y)$$
which, again by Lemma \ref{lema} is zero only if $\sum_{j}a^j_{\zn_0} X_j(x)=0$ for $x=\zt^r_M(y)\in U$.
Since $X_j(x)$ are linearly independent, all $a^j_{\zn_0}$ are $0$. This contradicts the choice of $\zn_0$.
Hence, the vector fields (\ref{e6}) are linearly independent at each point $y$ of $(\zt_M^r)^{-1}(U)$, so the rank of $D^{(r)}$ is exactly $k(r+1)$.

As $D^{(r)}$ are generated by vector fields (\ref{e6}), the distribution $\sT h_t(D^{(r)})$ is spanned by
$$\Big\{\sT h_t\left(X_j^{(\zn)}\right)\Big\}=\{t^{-\zn}X_j^{\zn}\,|\ \zn=0,\dots,r,\quad\text{and}\quad j=1,\dots,k\}\,,$$
so equals $D^{(r)}$. Here, $h_t$ is the canonical homogeneous structure on $\sT^rM$.

\medskip\noindent
If $D$ is involutive, then $[X,Y]\in D$ if $X,Y\in D$. But then, according to (\ref{Lb}),
$$[X^{(\zl)},Y^{(\zm)}]=[X,Y]^{(\zl+\zm-r)}\,,$$ so $D^{(r)}$ is involutive.

\end{proof}
\subsection{Weighted Ehresmann connections}
Let $\zt:F\to N$ be a weighted fibration on a graded bundle $h_0:F\to M$, i.e. $h_t$ maps fibres of $\zt$ onto fibres of $\zt$ for all $t\in\R$. Denote with $\sV F$ the vertical distribution of $\zt$ corresponding to this foliation. The distribution $\sV F$ is weighted.
An Ehresmann connection on $F$ is a distribution $D\subset\sT F$ complementary to $\sV F\subset\sT F$. In other words,
$\sT F=\sV F\oplus D$.
\begin{definition}
The Ehresmann connection $D$ is \emph{weighted} if $D$ is a weighted distribution.
\end{definition}
\begin{example}
Let $\tau: E\rightarrow M$ be a vector bundle, i.e. a graded bundle of degree $1$. A linear connection in a vector bundle is usually introduced in a form of a covariant derivative $\nabla:\Sec(\tau_M)\times \Sec(\tau)\rightarrow \Sec(\tau)$ being linear with respect to the first factor, and a first order derivation with respect to the second factor. Equivalently, a linear connection in a vector bundle can be defined as an Ehresmann connection in the fibration $\tau$ such that the horizontal distribution $D$ is a double vector subbundle of the double vector bundle $\sT E$. This means that $D$ is a weighted distribution.
\end{example}

\begin{example}\label{conn:1}
Let $\tau: E\rightarrow M$ be a vector bundle with linear connection $D$. If $E$ is of dimension $n+m$, where $m$ is the dimension of $M$, then the distribution $D$ is of rank $m$. Let $D^{(r)}$ denote the lift of $D$ to $\sT^r E$. According to the proposition \ref{p4} the distribution $D^{(r)}$ is of rank $(r+1)m$ which is equal to the dimension of the manifold $\sT^rM$. The horizontal vector fields that span $D$ in $\tau^{-1}(U)$ for some domain of coordinates $U\subset M$, can be chosen in the following form
$$X_k=\partial_{x^k}-\Gamma^A_{kB}(x)y^B\partial_{y^A}, \quad k\in{1,\ldots,m}$$
where $(x^i, y^A)$ are coordinates on $E$, with weight $0$ for $(x^i)$ and weight $1$ for $(y^B)$. Coordinates $(x^i)$ are, as usual, pull-backs of coordinates on the base manifold $M$. Functions $\Gamma^A_{kB}$ depend on base coordinates only and constitute the Christoffel symbols of the connection. The distribution $D^{(r)}$ is spanned by all the lifts $X_k^{(\lambda)}$ for $\lambda\in\{0,\ldots,r\}$. Since
$$\sT\sT^r\tau(X_k^{(\lambda)})=\partial_{x^k_{(r-\lambda)}}\,,$$
then
$$\sT\sT^r\tau(D^{(k)})=\sT\sT^r M.$$
We have then the splitting $\sT\sT^rE=\sV\sT^rE\oplus_{\sT^rE}D^{(r)}$ where vertical vectors are vertical with respect to the r-tangent projection $\sT^r\tau:\sT^rE\to\sT^r M$. The distribution $D^{(r)}$ is a weighted Ehresmann connection on the canonical graded bundle $\sT^rE$. Moreover, this connection is a linear connection in the vector bundle $\sT^r\tau$, since horizontal vector fields $X^{(\lambda)}_k$ are of the form
\be\label{conn:2}X^{(\lambda)}_k=\partial_{x^k_{(r-\lambda)}}-\sum_{\mu=0}^{\lambda}\sum_{\nu=0}^{\mu}
(\Gamma^A_{kB})^{\mu-\nu}y^B_{(\nu)}\partial_{y^A_{(r-\lambda+\mu)}},\ee
i.e. coefficients are linear with respect to coordinates $y^A_{(\rho)}$ for $\rho\in\{0,\ldots, r\}$.

For every linear Ehresmann connection in a vector bundle, there is a covariant derivative defined on sections of the bundle. It is enough to give the covariant derivative of the basis elements $e_{(\nu),B}$ of the space of sections such that an element of $\sT^r E$ over a point in $\sT^r M$ can be written as $\sum_{B=1}^{n}\sum_{\nu=0}^{r} y^B_{(\nu)}e_{(\nu)B}$. The covariant derivative of the section $e_{(\mu)B}$ reads
$$\nabla_{\partial_{x^k_{(\lambda)}}}e_{(\mu),B}=\sum_{\rho=\lambda}^{r}(\Gamma^A_{kB})^{(\rho-\lambda-\mu)}e_{(\rho),A}.$$
The above formula shows that the Christoffel symbols of the lifted connections are the lifts of the Christoffel symbols of the original connection. Here, we adopt the convention that $f^{(\lambda)}=0$ if $\lambda<0$.
\end{example}
\begin{example} In \cite{Morimoto:1970c} Morimoto defined the complete lift of an affine connection on $M$ to $\sT^r M$ as the only affine connection on $\sT^r M$ such that the covariant derivative of the lifted connection $\nabla^{(r)}$ satisfies
$$\nabla^{(r)}_{X^{(\lambda)}}Y^{(\mu)}=(\nabla_XY)^{\lambda+\mu-r}$$
for every pair $X,Y$ of vector fields on $M$. An affine connection on a manifold $M$ is a specific example of a linear connection in a vector bundle, namely a linear connection in the tangent bundle $\tau_M:\sT M\rightarrow M$, that can be equivalently described as a double vector subbundle $D$ of $\sT\sT M$. Applying the lift  from Example \ref{conn:1}, we get the distribution $D^{(r)}$ of $\sT^r\sT M$ which is a double-weighted subbundle of $\sT\sT^r\sT M$ with respect to the projections $\tau_{\sT^r \sT M}$ and $\sT\sT^r\tau_M$.
We claim that the covariant derivative $\nabla^{(r)}$ is associated to the linear connection in the bundle $\tau_{\sT^r M}:\sT\sT^r M\rightarrow \sT^r M$ with the horizontal distribution
$$\sT\sT\sT^r M\supset\mathcal{D}=\sT\kappa^r_M(D^{(r)}),$$
where
$$\kappa^r_M: \sT^r\sT M\rightarrow\sT\sT^r M$$
is the canonical isomorphism. Starting from the coordinate system $(x^i)$ on $M$ we construct the adopted coordinate system $(x^i, \delta x^j)$ for $\sT M$ and then the lifted coordinate system
$(x^i, \delta x^j, x^i_{1}, \delta x^j_{1}, \ldots, x^i_{r}, \delta x^j_{r})$ for $\sT^r\sT M$. On the other hand, we can start from $(x^i)$  on $M$ to get
$(x^i, x^i_{1}, \ldots, x^i_{r})$ {\cm for} $\sT^r M$ and then $(x^i, x^i_{1}, \ldots, x^i_{r},\delta x^i, \delta x^i_{1}, \ldots, \delta  x^i_{r})$. This does not lead
to any confusion, since the canonical isomorphism $\kappa_M^r$ in these coordinates is expressed as an appropriate permutation:
$$(x^i, x^i_{1}, \ldots, x^i_{r},\delta x^i, \delta x^i_{1}, \ldots, \delta  x^i_{r})\circ \kappa_M^r=
(x^i, \delta x^j, x^i_{1}, \delta x^j_{1}, \ldots, x^i_{r}, \delta x^j_{r})\,.$$
Horizontal vector fields of the lifted distribution $D^{(r)}$ on $\sT^r\sT M$ we get as in (\ref{conn:2}),
\be\label{conn:3}X^{(\lambda)}_k=\partial_{x^k_{r-\lambda}}-\sum_{\mu=0}^{\lambda}\sum_{\nu=0}^{\mu}
(\Gamma^i_{kj})^{\mu-\nu}\delta x^j_{\nu} \partial_{\delta x^i_{r-\lambda+\mu}}.\ee
Horizontal vector fields on $\sT\sT^r M$ spanning $\mathcal{D}$ are obtained from $X^{(\lambda)}_k$ by push-forward with $\kappa^r_M$, therefore they look in coordinates exactly as (\ref{conn:3}). Consequently, the Christofell symbols of the connection associated to $\mathcal{D}$ are precisely
$$\Gamma^{(i,\rho)}_{(k,\mu)(j,\nu)}=(\Gamma^{i}_{kj})^{(\rho-\mu-\nu)}\,,$$
as in formula (5.4) of \cite{Morimoto:1970c}.
\end{example}

\subsection{Weighted Poisson, symplectic and pseudo-Riemannian structures}
According to Definition \ref{wt}, a Poisson tensor $\zL$ is compatible with a graded bundle structure on $\zt:F\to M$
of degree $k$ if it has degree $-k$. We deal then with a \emph{weighted Poisson structure of degree $k$}. In particular, if a manifold $M$ is equipped with a Poisson structure $\zL$, then the tangent lift $\dt\zL$ (see \cite{Grabowski:1995}) is a weighted Poisson structure on the tangent bundle $\sT M$. This is exactly the Poisson tensor that defines the Lie algebroid structure on $\sT^*M$ associated with $\zL$ \cite{Grabowski:1979,Grabowski:1995}. Actually, all this can be generalized to higher complete lifts of Poisson structures $\zL$ on $M$, which define weighted Poisson structures $\zL^{(c)}$ of degree $r$ on $\sT^rM$ (cf. Theorem \ref{wP}), and to arbitrary $2$-contravariant tensors. Manifolds equipped with a $2$-contravariant tensors $\zL$ are called in \cite{Grabowski:1999} \emph{Leibniz structures (Leibniz manifolds)}. Leibniz structures define the Leibniz brackets of functions,
$$\{ f,g\}_\zL=\zL(\rmd f,\rmd g)\,.$$ A smooth map $\zvf:M\to N$ between Leibniz manifolds is a \emph{morphism of Leibniz structures} if it relates the two Leibniz tensors (or Leibniz brackets). The definition of \emph{weighted Leibniz structures of degree $k$} is the same as  in the case of Poisson structures, i.e. the Leibniz tensor should be of degree $-k$.

\begin{proposition}\label{p2}
A weighted Leibniz structure of degree $k$ is a Leibniz manifold $(F,\zL)$ equipped additionally with a homogeneity structure $h$ of degree $k$, such that the Leibniz bracket of any two homogeneous functions $f_{w_1},g_{w_2}\in C^\infty(F)$ of degree $w_1$ and $w_2$, respectively, is a homogeneous function of degree $w_1+w_2-k$,
$$\deg\left(\{ f_{w_1},g_{w_2}\}_\zL\right)=w_1+w_2-k\,.
$$
This is equivalent to the fact that the morphism
$$\zL^\sharp :\sT^*[k] F\to \sT F\,,\quad \za\mapsto i_\za\zL\,,$$
is a morphism of $\GrL$-bundles.
\end{proposition}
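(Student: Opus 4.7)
My plan is to establish two equivalences: (i) the condition $\Ll_{\nabla_F}\zL = -k\zL$ is equivalent to the stated homogeneity property of the Leibniz bracket on homogeneous functions; (ii) both are equivalent to $\zL^\sharp \colon \sT^*[k]F \to \sT F$ being a $\GrL$-bundle morphism.

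For (i), I would start from the Leibniz rule for the Lie derivative of a tensor evaluated on forms,
$$\Ll_{\nabla_F}\bigl(\zL(\rmd f,\rmd g)\bigr) = (\Ll_{\nabla_F}\zL)(\rmd f,\rmd g) + \zL(\Ll_{\nabla_F}\rmd f,\rmd g) + \zL(\rmd f,\Ll_{\nabla_F}\rmd g).$$
Because $\rmd$ commutes with $\Ll_{\nabla_F}$, the earlier example giving $\Ll_{\nabla_F}\rmd x^j = w_j\rmd x^j$ extends to $\Ll_{\nabla_F}\rmd f_w = w\,\rmd f_w$ for any homogeneous $f_w$ of degree $w$. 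Thus the right-hand side equals $(\Ll_{\nabla_F}\zL)(\rmd f,\rmd g) + (w_1+w_2)\{f,g\}_\zL$. The forward direction is then immediate. For the converse, evaluate on homogeneous coordinate differentials $\rmd x^i,\rmd x^j$; since these span $\sT^*F$ fibrewise and their pairings $\zL(\rmd x^i,\rmd x^j)=\zL^{ij}$ are arbitrary, the condition that every $\{f_{w_1},g_{w_2}\}_\zL$ has the prescribed weight forces $\Ll_{\nabla_F}\zL = -k\zL$ as a tensor identity.

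For (ii), I would work in homogeneous local coordinates $(x^i)$ of weights $w_i$ and write $\zL = \tfrac{1}{2}\zL^{ij}\pa_{x^i}\we\pa_{x^j}$. Since $\pa_{x^j}$ has weight $-w_j$ (again by the earlier example), the tensor $\zL$ has weight $-k$ if and only if each component $\zL^{ij}$ is a homogeneous function of weight $-k+w_i+w_j$. In the dual coordinates $(x^i,p_j)$ on $\sT^*F$, the contraction $\zL^\sharp(\za)=i_\za\zL$ reads $(x^i,p_j)\mapsto (x^i,\zL^{kj}(x)p_k)$, so $\zL^\sharp$ is automatically linear in the fibre and hence a morphism of the underlying vector bundle structures. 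On $\sT^*[k]F$ the weight of $p_j$ is $k-w_j$, while on $\sT F$ the weight of $\dot x^j$ is $w_j$. The component $\zL^{kj}(x)p_k$ carries $\sT^*[k]$-weight $(-k+w_k+w_j)+(k-w_k)=w_j$ precisely when $\zL^{ij}$ has the prescribed weight $-k+w_i+w_j$, which is equivalent to $\deg(\zL)=-k$. Equivalently, one checks directly that $\dt h_t\circ\zL^\sharp = \zL^\sharp\circ h^*[k]_t$, giving the desired compatibility with both homogeneity structures.

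The only real obstacle is bookkeeping: one must track simultaneously the original weights $w_i$ on $F$, the induced weights on $\sT F$, and the shifted weights on $\sT^*[k]F$. Once the local coordinate formula for $\zL^\sharp$ is written down and the weight of each variable is identified, the whole argument collapses to the arithmetic identity $(-k+w_k+w_j)+(k-w_k)=w_j$, which holds exactly when the components $\zL^{ij}$ have the homogeneity degree dictated by $\deg(\zL)=-k$.
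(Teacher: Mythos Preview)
Your proof is correct and follows essentially the same route as the paper: both reduce to the local coordinate identity $\deg(\zL^{ij})=w_i+w_j-k$ and then verify that $\zL^\sharp$ preserves weights via $(k-w_l)+(w_l+w_j-k)=w_j$. Two minor remarks: a Leibniz tensor is a general $2$-contravariant tensor, so you should write $\zL=\zL^{ij}\,\pa_{x^i}\ot\pa_{x^j}$ rather than with $\we$; and your Lie-derivative argument for (i) is in fact slightly cleaner than the paper's, which instead invokes the polynomial structure of homogeneous functions on graded bundles to pass from coordinate functions to arbitrary homogeneous ones.
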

\begin{proof}
	In a local system of homogeneous coordinates $(x_1,...,x_n)$ with weights respectively $w_1,\dots,w_n$, we have
	\[
	\zL=\sum_{i,j} \zL^{ij}\frac{\pa}{\partial{x^i}}\ot\frac{\pa}{\partial{x^j}}\,,
\]
where $\zL^{ij}=\left\langle \zL, dx^i\ot dx^j\right\rangle =\{x^i,x^j\}_\zL$  are smooth functions on $F$.
The tensor $\zL$ is of degree $-k$ if and only if $\deg(\zL^{ij})=\deg(\{ x^i,x^j\}_\zL)=w_i+w_j-k$. As any homogeneous function is locally a polynomial in coordinates  $(x_1,...,x_n)$, it is also true for arbitrary homogeneous functions $f_{w_1},g_{w_2}\in C^\infty(F)$.

The map $\zL^\sharp$ in the adapted coordinates $(x^i,\dot x^j)$ and $(x^i,p_j)$ on $\sT F$ and $\sT^*F$ reads
$$(x^i,\dot x^j)\circ \zL^\sharp=\left(x^i,\sum_lp_l\zL^{lj}\right)\,.$$ As coming from a tensor, it is obviously linear with respect to the vector bundle structures on $\sT F$ and $\sT^*F$, and as $\deg(p_l)=k-w_l$ for the phase lift of the homogeneity structure on $F$, it is clearly also a morphism of the lifted graded bundle structures on $\sT^*F$ and $\sT F$, i.e. $\deg(\sum_l\zL^{lj}p_l)=w_j$.

\end{proof}
\noindent The map $\zL^\sharp :\sT ^* F\to \sT F$ can serve also for characterization of the fact that $\zL$ is a Poisson bivector \cite{Grabowski:1979a}.
\begin{example}
Since the complete lift $\zL^{(r)}$ of a weighted Poisson (Leibniz) tensor $\zL$ on a graded bundle $F$ of degree $k$ to $\sT^rF$ is a Poisson (Leibniz) tensor of degree $-r$, the higher tangent bundle $\sT^rM$ of a Poisson (Leibniz) manifold $F$ is canonically a weighted Poisson (Leibniz) structure with respect to the canonical graded bundle structure on $\sT^rF$. But $\sT^rF$ is a double graded bundle with the second homogeneity structure, here of degree $k$, being the complete lift of the homogeneity structure $h$ on $F$ (see Theorem \ref{tx}). The corresponding weight vector field is the complete lift $\nabla^{(r)}$ of the weight vector field of $h$. Note that $\zL^{(r)}$ is also a weighted Poisson tensor with respect to $\nabla^{(r)}$. Indeed, according to Corollary \ref{wP},
$$[\nabla^{(r)},\zL^{(r)}]_S=[\nabla,\zL]_S^{(r)}=(-k\zL)^{(r)}=-k\zL^{(r)}\,.$$
One can say therefore that the higher tangent bundle $\sT^rF$ of a weighted Poisson manifold $(F,h,\zL)$ of degree $k$ is a double-weighted Poisson manifold of bi-degree $(r,k)$.
\end{example}
Let $E\to N$ be a vector bundle. Similarly like a linear Poisson tensor on $E^*$ induces a Lie algebroid bracket on $E$, any linear Leibniz tensor $\zL$ on $E^*$ induces a bracket $[X,Y]_\zL$ on sections of $E$:
$$\zi([X,Y]_\zL)=\{\zi(X),\zi(Y)\}_\zL\,,$$
where $\zi(X)$ is the linear function on $E^*$ associated with $X\in\Sec(E)$. This bracket, in general, does not satisfy the Jacobi identity (it may even be non-skew-symmetric) and possesses two anchors, the left one and the right one.
This structure, called \emph{general algebroid}, was introduced and studied in \cite{Grabowski:1999}. If the tensor $\zL$ is skew-symmetric, the corresponding general algebroid is called \emph{skew algebroid}. A skew algebroid for which the anchor map is a morphism of algebroids: $\zr[X,Y]=[\zr(X),\zr(Y)]_{vf}$, where $[\cdot,\cdot]_{vf}$ is the bracket of vector fields, we call an \emph{almost Lie algebroid}. On an almost Lie algebroid $E$ one can develop the concept of homotopy of $E$-paths and Pontryagin Maximum Principle \cite{Grabowski:2011}.
\begin{example} Let $D$ be a vector subbundle of a Lie algebroid $E$ which equipped with the bracket $[\cdot,\cdot ]_\zL$. Suppose additionally that $E$ is equipped with a smooth symmetric tensor field $g\in\Sec(E^*\ot E^*)$ which induces a scalar product in the fibers (`Riemannian structure' on $E$), so that $E=D\oplus D^\perp$ and $E^*=D^*\oplus(D^\perp)^*$. If $p_D$ is the orthogonal projection $p_D:E\to D$, then we have an induced skew algebroid bracket $[\cdot,\cdot ]_D$ on $D$, which for $X,Y\in\Sec(D)$ reads
$$[X,Y]_D= p_D([X,Y]_\zL)\,.$$
If $p_{D^*}:E^*\to D^*$ is the orthogonal projection of $E^*$ onto $D^*$, then
the corresponding Leibniz tensor on $D^*$ is $(p_{D^*})_*(\zL)$.
The bracket $[X,Y]_D$ does not satisfy the Jacobi identity in general, and is used to formulate a nice geometric description of dynamics for non-holonomic constraint $D$ and mechanical Lagrangians \cite{Grabowski:2009a}.
\end{example}
\noindent
Recall that a two-form $\omega=\omega_{ij}(x)\rmd x^i\ot \rmd x^j$ is compatible with a graded bundle structure on $\zt:F\to M$ of degree $k$ if and only if $\zw$ has degree $k$. In this case, $\deg(\zw_{ij})=k-w_i-w_j$. A natural definition of weighted pseudo-Riemannian structures then is the following.
\begin{definition} A weighted pseudo-Riemannian structure on a graded bundle $F$ of degree $k$ is a pseudo-Riemannian structure $\zm$ on $F$ such that the symmetric two-form $\zm$ is of degree $k$.
\end{definition}

\begin{proposition} A $(0,2)$-tensor $\zw$ on a graded bundle $F$ is weighted if and only if
$$\zw^\flat:\sT F\to\sT^*[k]F\,,\quad \zw^\flat(f(x)\pa_{x^l})=f(x)\zw_{lj}(x)\xd x^j\,,$$
is a morphism of $\GrL$-bundles.
\end{proposition}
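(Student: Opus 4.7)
The plan is to mimic the argument used for $\zL^\sharp$ in Proposition \ref{p2}. Recall that $\sT F$ carries the $\GrL$-structure coming from the tangent lift $\dt h$ of the homogeneity structure together with the natural vector bundle projection $\zt_F:\sT F\to F$, with weights $\deg(\dot x^l)=w_l$; similarly $\sT^*[k]F$ is the $\GrL$-bundle for the $k$-th phase lift, with dual coordinates $p_j$ of weight $\deg(p_j)=k-w_j$. A $\GrL$-morphism between them is exactly a smooth map which is (i) a vector bundle morphism over the common base $F$, and (ii) a morphism of graded bundles with respect to the second homogeneity structures.

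First I would write $\zw^\flat$ in the adapted coordinates $(x^i,\dot x^l)$ on $\sT F$ and $(x^i,p_j)$ on $\sT^*[k]F$:
\[
 (x^i,p_j)\circ\zw^\flat=\Bigl(x^i,\ \sum_l\zw_{lj}(x)\,\dot x^l\Bigr).
\]
Since the expression is linear in $\dot x$ and preserves the base point in $F$, condition (i) holds automatically. Thus $\zw^\flat$ is a $\GrL$-morphism iff each pull-back $p_j\circ\zw^\flat$ is a homogeneous function of the appropriate weight $k-w_j$.

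Second, I would compute this weight. Since $\deg(\dot x^l)=w_l$, the term $\zw_{lj}(x)\dot x^l$ has weight $\deg(\zw_{lj})+w_l$. Requiring $\deg(\zw_{lj})+w_l=k-w_j$ for all $l,j$ is equivalent to $\deg(\zw_{lj})=k-w_l-w_j$. By the same coordinate calculation used in Definition \ref{wt} (or as recalled in the paragraph preceding the statement), this condition is exactly $\Ll_{\nabla_F}(\zw)=k\cdot\zw$, i.e.\ that the $(0,2)$-tensor $\zw$ is of weight $k$, which for a $(0,2)$-tensor is precisely the compatibility condition of Definition \ref{wt}. Running these equivalences in both directions gives both implications.

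There is no serious obstacle; the only care needed is to check that condition (i) really is automatic, and to match the weight $k$ demanded by Definition \ref{wt} (where $(q,p)=(0,2)$ forces $\deg(K)=-(q-1)k=k$) with the phase-lift shift $\deg(p_j)=k-w_j$ used on $\sT^*[k]F$. These two shifts are designed precisely to match, which is why the characterisation is so clean and why $\sT^*[k]F$ (rather than $\sT^*F$) appears as the target.
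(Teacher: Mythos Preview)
Your proposal is correct and follows essentially the same route as the paper: write $\zw^\flat$ in adapted coordinates as $(x^i,\dot x^l)\mapsto(x^i,\sum_l\zw_{lj}(x)\dot x^l)$ and check that preservation of the graded-bundle weights amounts precisely to $\deg(\zw_{lj})=k-w_l-w_j$, i.e.\ to $\zw$ being weighted. Your version is in fact more explicit than the paper's, which records only the degree computation $\deg(\sum_l\dot x^l\zw_{lj})=k-w_j=\deg(p_j)$ and leaves the linearity check and the converse implicit.
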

\begin{proof}
As $\zw^\flat(x^i,\dot x^j)=(x^i,\sum_l\dot x^l\zw_{lj}(x))$ and $\deg(\sum_l\dot x^l\zw_{lj}(x))=k-w_j=\deg(p_j)$,
the map $\zw^\flat$ preserves the degrees.

\end{proof}
\noindent Now, suppose that the map $\zw^\flat$ is an isomorphism of $\GrL$-bundles (e.g $\zw$ is a symplectic form). Then, $(\zw^\flat)^{-1}:\sT F\to\sT^*F$ is also an isomorphism of $\GrL$ bundles and corresponds to a Leibniz tensor field $\zL$, $\zL^\sharp=(\zw^\flat)^{-1}$.
According to Proposition \ref{p2}, $\zL$ is of degree $-k$, so weighted.
\begin{corollary}
If a weighted Poisson tensor $\zL$ is symplectic, then the corresponding symplectic form is also weighted.
\end{corollary}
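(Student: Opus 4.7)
The plan is to read off the corollary directly from the two preceding propositions by inverting the musical isomorphism. Concretely, start from a weighted Poisson tensor $\zL$ on the graded bundle $F$ of degree $k$, i.e. $\deg(\zL)=-k$. By Proposition \ref{p2}, the contraction map
\[
\zL^\sharp:\sT^*[k]F\longrightarrow\sT F,\qquad \za\mapsto i_\za\zL,
\]
is a morphism of $\GrL$-bundles. The symplectic hypothesis means that $\zL^\sharp$ is an isomorphism of vector bundles, and the corresponding $2$-form $\zw$ is characterized by $\zw^\flat=(\zL^\sharp)^{-1}$ (with the usual sign convention).

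Next I would invoke the general fact that an isomorphism of $\GrL$-bundles has an inverse which is again a morphism of $\GrL$-bundles. This is because both the vector bundle structure and the additional homogeneity structure $h_t$ are intertwined by $\zL^\sharp$, so they are intertwined by its inverse as well. Equivalently, if $\zL^\sharp$ preserves the two weight vector fields, then so does $(\zL^\sharp)^{-1}$. Hence $\zw^\flat:\sT F\to\sT^*[k]F$ is itself a morphism of $\GrL$-bundles.

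Finally, apply the proposition stated just before the corollary: a $(0,2)$-tensor $\zw$ is weighted (i.e.\ $\deg(\zw)=k$) if and only if $\zw^\flat$ is a morphism of $\GrL$-bundles. Combining this with the previous step yields $\Ll_{\nabla_F}\zw=k\cdot\zw$, which is exactly the assertion that $\zw$ is weighted. Since closedness $\rmd\zw=0$ and non-degeneracy are already built into the word ``symplectic'', there is nothing further to check; in particular the Poisson condition $[\zL,\zL]_S=0$ corresponds to $\rmd\zw=0$ automatically under the musical correspondence.

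The only mildly delicate point is the observation that the inverse of a degree-preserving isomorphism between graded-linear bundles is again degree-preserving; this is an instance of the general principle that $\zf\circ h^{F_1}_t=h^{F_2}_t\circ\zf$ implies $\zf^{-1}\circ h^{F_2}_t=h^{F_1}_t\circ\zf^{-1}$, together with the analogous statement for the vector bundle homotheties defining the $\GrL$-structure. No computation in local coordinates is needed beyond what has already been performed in the proofs of the two propositions being invoked.
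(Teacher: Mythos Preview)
Your argument is correct and is exactly the approach the paper intends: the text immediately preceding the corollary observes that if $\zw^\flat$ is a $\GrL$-isomorphism then so is $(\zw^\flat)^{-1}=\zL^\sharp$, and the corollary is just this implication run in reverse, using Proposition~\ref{p2} and the proposition on $\zw^\flat$ together with the trivial fact that inverses of $\GrL$-isomorphisms are $\GrL$-isomorphisms. There is nothing to add.
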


\subsection{Weighted contact structures}
It is clear that a \emph{weighted contact form} $\za$ on a graded bundle of degree $k$ is a contact form which is homogeneous of degree $k$. Let us recall that a contact structure $C$ on a manifold $M$ is a co-rank $1$ distribution on $M$ which is `completely non-integrable'. Such distributions are locally kernels of local contact $1$-forms. This implies that the dimension of $M$ is odd (another approach to graded contact geometry and Jacobi structures one can find in \cite{Mehta:2013}).

It was shown in \cite{Grabowski:2013} that a $1$-form $\za$ on $M$ is contact if and only if the canonical symplectic form $\zw_M$ on $\sT^*M$, restricted to the line subbundle $L_\za\to M$ generated by the image of $\za$, is symplectic form $\zw_\za$ on $L^\ti_\za=L_\za\setminus \{0_M\}$, i.e. on $L_\za$ with the zero-section removed. Note that, $L_\za^\ti$ is canonically a $\R^\ti$-principal bundle over $M$, and the subbundle $L_\za\subset \sT^*M$ may be viewed as the annihilator of $C=\Ker(\za)$. The map $I_\za:\R^\ti\ti M\to L_\za^\ti$ given in local coordinates by
$I_\za(s,x)=s\za(x)$, is a diffeomorphism. For $\za=\za_i(x)\xd x^i$,
$$I_\za^*(\zw_\za)=\xd(t\za_i)\we\xd x^i=\za_i(x)\,\xd t\we \xd x^i+t\,\xd\za_i\we\xd x^i=\xd t\we\za+t\,\xd\za\,.$$
The $\R^\ti$-action on $L^\ti_\za$ transformed to $\R^\ti\ti M$ reads $m_t(s,x)=(st,x)$.
This is the reason why in \cite{Grabowski:2013} contact structures were identified as
\emph{symplectic principal $\R^\ti$-bundles}. The latter is canonically $\R^\ti$-principal bundle $P$ over $M$ and the symplectic form $\zw$ on $P$ is homogeneous of degree $1$ with respect to the $\R^\ti$-action: $m_t^*(\zw)=t\cdot\zw$.

It is easy to see that if $M$ is a graded bundle with the homogeneity structure $h$ of degree $k$, then $L_\za^\ti$ is a graded bundle with respect to the homogeneity structure $\hat h_t(s,x)=(s,h_t(x))$.
The $1$-form $\za$ is homogeneous of degree $\zl$ on $M$ if and only if $\zw_\za$ is homogeneous of degree $\zl$ on $L_\za^\ti$.

\begin{proposition}
Let $C\subset\sT F$ be a contact structure on a graded bundle $F$. If $\za$ and $\zb$ are local contact forms of degrees $w_\za$ and $w_\zb$, respectively, each of them generating locally $C^o$,  then $w_\za=w_\zb$.
\end{proposition}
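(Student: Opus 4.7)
The plan is to reduce the statement to a question about the transition function between $\za$ and $\zb$. Since $\za$ and $\zb$ both generate $C^{o}$ locally and are therefore nowhere vanishing on their common domain, and since $C^{o}$ is a line subbundle of $\sT^*F$, there exists a unique smooth function $f$ on that common domain, nowhere vanishing, with
\[\zb=f\cdot\za.\]

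Next I would exploit the hypothesis that $\za$ and $\zb$ are homogeneous, i.e.\ $(h_t)^*\za=t^{w_\za}\za$ and $(h_t)^*\zb=t^{w_\zb}\zb$ for $t>0$. Pulling back $\zb=f\za$ by $h_t$ and using $C^\infty$-linearity of pull-back one gets
\[t^{w_\zb}f\,\za=(h_t)^*(f\za)=(f\circ h_t)\cdot(h_t)^*\za=(f\circ h_t)\,t^{w_\za}\za.\]
Because $\za$ is nowhere zero, this forces $f\circ h_t=t^{w_\zb-w_\za}f$ for all $t>0$, so $f$ is homogeneous on the graded bundle $F$ of degree $w:=w_\zb-w_\za$.

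The final step, which is the crux, is to show that this forces $w=0$. Here I would use that on a (genuine, i.e.\ $\N$-)graded bundle $F$, the action $h$ extends smoothly to $t=0$ and that homogeneous functions have non-negative integer degree (Grabowski--Rotkiewicz). If $w>0$, evaluating $f\circ h_t=t^w f$ at $t=0$ gives $f\circ h_0\equiv 0$, contradicting the fact that $f$ is nowhere zero on its domain. The only alternative to rule out is $w<0$: here the identity $f\circ h_t=t^w f$ would only be claimed for $t>0$, but the left-hand side extends continuously to $t=0$ while the right-hand side blows up at any point where $f\neq 0$, again a contradiction. Hence $w=w_\zb-w_\za=0$.

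The main obstacle, if any, is the bookkeeping of the sign of $w$ and the subtlety that homogeneity of a form is \emph{a priori} only an identity for $t>0$; once one invokes the smoothness of the monoid action $h:\R\ti F\to F$ characteristic of graded (as opposed to $\Z$-graded) bundles, the argument closes cleanly.
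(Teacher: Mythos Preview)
Your proof is correct and follows essentially the same route as the paper: write $\zb=f\za$ with $f$ nowhere vanishing, deduce that $f$ is homogeneous of degree $w_\zb-w_\za$, and then argue that a nowhere-vanishing homogeneous function must have degree $0$. The only cosmetic difference is in the last step: the paper invokes the Grabowski--Rotkiewicz polynomiality of homogeneous functions (so a nonzero-degree $f$ has no constant term and hence vanishes along $M=h_0(F)$), whereas you argue directly from the smooth extension of the monoid action to $t=0$; both amount to evaluating at the zero section.
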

\begin{proof}
There is a nowhere-vanishing local function $f$ such that $\zb=f\za$. We have then
$$w_\zb f\za=w_\zb\zb=\Ll_{\nabla_F}\zb=\nabla_F(f)\za+f\Ll_{\nabla_F}\za=(\nabla_F(f)+w_\za f)\za\,.$$
Hence, $\nabla_F(f)=(w_\zb-w_\za)f$, i.e. $f$ is of weight $w_\zb-w_\za$ and is a polynomial in homogeneous local coordinates. Since $\nabla_F$ is linear, it is clear that the constant term in this polynomial must be 0, so $f$ vanishes at 0. But then $f\za$ vanishes at 0 and therefore it cannot be a contact form.

\end{proof}
\noindent The above proposition justifies the following definition.
\begin{definition}
A contact structure $C\subset\sT F$ on a graded bundle $F$ of degree $k$ we call \emph{homogeneous of degree $r$} if, in a neighbourhood of each point $p\in F$, the line bundle $C^o$ is generated by a homogeneous contact form of degree $r$. We say that the contact structure $C$ is  \emph{weighted} if homogeneous local generators of $C^o$ are weighted contact forms.
\end{definition}
\noindent This definition immediately implies that a weighted contact structure is a weighted distribution on $F$ and the annihilator $C^o$ is a graded subbundle in $\sT^*F$. We have also
\begin{corollary}
A contact structure $C\subset\sT M$ is weighted if and only if the symplectic form $\zw$ on $(C^o)^\ti$ is weighted.
\end{corollary}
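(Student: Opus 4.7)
The plan is to reduce the statement to the local identity relating a $1$-form $\alpha$ and the symplectic form $\omega_\alpha$ on $L_\alpha^\times$ which was already established immediately above the definition of weighted contact structures. First I would observe that being weighted is a local condition on both sides: $C$ is weighted iff around every point $p\in F$ there exists a local contact form $\alpha$ generating $C^o$ with $\Ll_{\nabla_F}\alpha = k\alpha$, while $\omega$ on $(C^o)^\times$ is weighted iff around every point it is homogeneous of degree $k$ with respect to the restricted homogeneity structure on $(C^o)^\times\subset\sT^*F$. Thus it suffices to argue equivalence in a neighbourhood of an arbitrary $p\in F$ on which $C^o$ is generated by a homogeneous local contact form $\alpha$ of definite weight $w_\alpha$; by the proposition preceding the definition of weighted contact structures, this weight does not depend on the chosen generator.

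Next I would use the diffeomorphism $I_\alpha:\R^\times\ti U\to L_\alpha^\times$, $I_\alpha(s,x)=s\alpha(x)$, under which the graded bundle structure on $L_\alpha^\times$ corresponds to $\hat h_t(s,x)=(s,h_t(x))$, i.e.\ the weight vector field on $\R^\times\ti U$ is simply $\nabla_F$ acting on the $U$-factor. From the formula
$$I_\alpha^*(\omega_\alpha)=\rmd t\wedge\alpha+t\,\rmd\alpha$$
recalled in the section, together with $\Ll_{\nabla_F}(\rmd t)=0$ and $\Ll_{\nabla_F}\circ\rmd=\rmd\circ\Ll_{\nabla_F}$, I would compute
$$\Ll_{\nabla_F}\bigl(I_\alpha^*\omega_\alpha\bigr)=\rmd t\wedge\Ll_{\nabla_F}\alpha+t\,\rmd(\Ll_{\nabla_F}\alpha)=w_\alpha\bigl(\rmd t\wedge\alpha+t\,\rmd\alpha\bigr)=w_\alpha\, I_\alpha^*\omega_\alpha,$$
and conversely, that if $I_\alpha^*\omega_\alpha$ is of degree $\lambda$, then reading off the component along $\rmd t$ forces $\Ll_{\nabla_F}\alpha = \lambda\alpha$. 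Hence $\omega_\alpha$ is homogeneous of degree $w_\alpha$ on $L_\alpha^\times$ exactly when $\alpha$ is homogeneous of degree $w_\alpha$ on $U$, which is precisely the local identity already noted in the paper.

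Finally, specializing to $w_\alpha=k$ yields: $\alpha$ is a weighted contact form iff $\omega_\alpha$ is homogeneous of degree $k$ on $L_\alpha^\times$, i.e.\ iff $\omega$ restricted to $(C^o)^\times$ is a weighted symplectic form in the sense of the preceding subsection. Patching over local generators gives the global equivalence, since the degree of a homogeneous form is a local notion and the previously established uniqueness of the weight for different local generators of $C^o$ ensures consistency on overlaps. The only place requiring any care is the identification of the homogeneity structure on $(C^o)^\times$ inherited from $\sT^*F$ with the product structure $\hat h_t(s,x)=(s,h_t(x))$ under $I_\alpha$; this is immediate since $I_\alpha(s,h_t(x))=s\alpha(h_t(x))=s\cdot h_t^*\!\alpha|_{h_t(x)}\,$-free in $s$ and linear along the fiber of $L_\alpha$. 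So there is no substantive obstacle beyond keeping track of conventions.
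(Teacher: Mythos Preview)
Your proposal is correct and follows essentially the same route as the paper. The paper does not give a separate proof of this corollary; it simply records the observation made just before the definition that ``the $1$-form $\za$ is homogeneous of degree $\zl$ on $M$ if and only if $\zw_\za$ is homogeneous of degree $\zl$ on $L_\za^\ti$'' and then specializes to $\zl=k$. Your Lie-derivative computation on $I_\alpha^*\omega_\alpha=\xd t\wedge\alpha+t\,\xd\alpha$ is exactly the verification of that observation, and your use of the preceding proposition to guarantee consistency on overlaps is the right way to globalize.

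One small caution: in your final paragraph you speak of the homogeneity structure on $(C^o)^\times$ as ``inherited from $\sT^*F$''. The structure actually used in the paper is $\hat h_t(s,x)=(s,h_t(x))$ transported via $I_\alpha$, which is not literally the restriction of $(\xd_\sT h)^*$ or $(\xd_\sT h)^*[k]$ to $L_\alpha^\times$. Its well-definedness (independence of the local homogeneous generator $\alpha$) follows from the proposition, since any two such generators differ by a nowhere-vanishing function of degree $0$. Your main argument does not rely on the ``inherited from $\sT^*F$'' phrasing, so this is only a matter of wording.
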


\subsection{Weighted Poisson-Nijenhuis structures}
Let $F$ be a graded bundle and	
$$ {N}=\displaystyle  N_j^{i}(x) \frac{\pa}{\pa x^i}\ot \rmd x^j$$ be a weighted Nijenhuis tensor on $F$.
This means that the degree of ${N}$ is zero, i.e. $\deg(N_j^{i})=w_i-w_j$. It is easily seen that this is equivalent to the fact that the associated map
$\tilde{{N}}:\sT F\to\sT F$ defined by
$$(x^i,\dot x^j)\circ \tilde{{N}}=(x^i, N^{j}_{l}(x)\dot x^l)$$
is a morphism of $\GrL$-bundles. Indeed,
$\deg( N^{j}_{l}\,\dot x^l)=w_j-w_l+w_l=w_j=\deg(x^j)$.
		
Let us recall now that a Poisson-Nijenhuis manifold is a manifold $F$ equipped with a Poisson tensor $\zL=\zL^{ij}(x)\pa_{x^i}\ot\pa_{x^j}$, $\zL^{ij}=-\zL^{ji}$, and a Nijenhuis tensor $ N$ that are compatible, which means that
	\be\label{PN}\tilde{  N} \circ \zL^{\sharp} = \zL^{\sharp} \circ\tilde{  N}^t\,,\ee
	and $$
C(\zL, N)(\alpha,\beta)=\left[ \alpha,\beta\right]_{  N\zL}- \left[ \alpha,\beta\right]^{ N^{t}}_\zL=0,\quad \forall \alpha, \beta \in \Omega^1(F)\,.$$
Here, $\tilde{  N}^{t}:\sT^{*}F \to \sT^{*}F$ is the dual to $\tilde{  N}$ and
\[
	\left[ \alpha,\beta\right]^{{ N}^{t}}_\zL
	=[\tilde{  N}^{t}\alpha,\beta]_{\zL}+[\alpha,\tilde{  N}^{t}\beta]_{\zL} -\tilde{  N}^{t}\left[ \alpha,\beta \right]_{\zL}\,,	
	\]
where the bracket $[\cdot,\cdot]_{\zL}$ is the bracket of 1-forms defined by the Poisson bivector $\zL$.
	Similarly, the bracket $\left[ \alpha,\beta\right]_{{ N}\zL}$ is the bracket of 1-forms defined by the Leibniz tensor
$${ N} \zL=\zL^{il}(x)N^j_l(x)\pa_{x^i}\ot\pa_{x^j}$$
which is the Leibniz tensor generating the linear map $\tilde{  N} \circ \zL^{\sharp}:\sT F\to\sT F$;
(for more details see, \cite{Kosmann:1990,Magri:1984}). Condition (\ref{PN}) means that the tensor $ N\zL$
is skew-symmetric.

\noindent It is known that $C(\zL,  N)$ is a  $(2,1)$-tensor field, called the \emph{concomitant} of $\zL$ and $ N$.
\begin{definition}
A \emph{weighted Poisson-Nijenhuis structure} on a graded  bundle $F$ of degree $k$ is a Poisson-Nijenhuis structure $(\zL, N)$ whose Poisson and Nijenhuis structures are weighted, i.e. $\zL$ is of degree $k$ and $ N$ is of degree $0$.
\end{definition}
\begin{theorem}
If a Leibniz tensor $\zL$ and a $(1,1)$-tensor $ N$ on $F$ are weighted, then  $ N\zL$ and $C(\zL,  N)$ are also weighted tensors.
\end{theorem}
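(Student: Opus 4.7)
The plan is to deduce both statements by bookkeeping of degrees under $\Ll_{\nabla_F}$, leveraging the functorial characterisations already in hand: the hypothesis that $N$ is a weighted $(1,1)$-tensor (degree $0$) is equivalent, by the discussion preceding the theorem, to $\tilde N:\sT F\to\sT F$ being a $\GrL$-bundle morphism, while by Proposition \ref{p2} the weightedness of the Leibniz tensor $\zL$ (degree $-k$) is equivalent to $\zL^\sharp:\sT^*[k]F\to\sT F$ being a $\GrL$-bundle morphism.

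For the first assertion, a direct coordinate check gives the identity $(N\zL)^\sharp=\tilde N\circ\zL^\sharp$ of maps $\sT^*[k]F\to\sT F$. By the two characterisations above, both $\tilde N$ and $\zL^\sharp$ are $\GrL$-morphisms, hence so is their composition. Re-applying Proposition \ref{p2} in the reverse direction, the Leibniz tensor $N\zL$ is weighted of degree $-k$. A component-level check gives the same conclusion: $(N\zL)^{ij}=\sum_l\zL^{il}N^j_l$ has degree $(w_i+w_l-k)+(w_j-w_l)=w_i+w_j-k$ independently of $l$, which is exactly the degree required for the coefficient of $\pa_{x^i}\ot\pa_{x^j}$ in a weighted $2$-contravariant tensor.

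For the second assertion, $C(\zL,N)$ is a $(2,1)$-tensor, so by Definition \ref{wt} weightedness amounts to $\deg C(\zL,N)=-k$; equivalently, after evaluation on homogeneous $1$-forms $\alpha,\beta$ of degrees $w_\alpha,w_\beta$, the $1$-form $C(\zL,N)(\alpha,\beta)$ should have degree $w_\alpha+w_\beta-k$. Expanding
\[
C(\zL,N)(\alpha,\beta)=[\alpha,\beta]_{N\zL}-[N^t\alpha,\beta]_\zL-[\alpha,N^t\beta]_\zL+N^t[\alpha,\beta]_\zL,
\]
I would check each summand separately, reducing to two observations: (a) for any weighted Leibniz tensor $\Pi$ of degree $-k$, the induced bracket $[\cdot,\cdot]_\Pi$ on $\zW^1(F)$ shifts degree by $-k$, sending $(\alpha,\beta)$ of degrees $(w_\alpha,w_\beta)$ to a $1$-form of degree $w_\alpha+w_\beta-k$; and (b) the transpose $N^t:\sT^*F\to\sT^*F$ preserves degree, a one-line coordinate check from $N^t\alpha=N^i_j\alpha_i\,\xd x^j$. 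Combining (a) applied to $\Pi=\zL$ and to $\Pi=N\zL$ (weighted by the previous paragraph) with (b) then handles all four terms.

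The only genuinely computational input is (a), and this is the expected obstacle. I would establish it from the explicit Poisson-style expression
\[
[\alpha,\beta]_\Pi=\Ll_{\Pi^\sharp\alpha}\beta-\Ll_{\Pi^\sharp\beta}\alpha-\xd\langle\Pi,\alpha\ot\beta\rangle,
\]
counting degrees summand by summand: $\Pi^\sharp$ shifts degree by $-k$, so $\Pi^\sharp\alpha$ is a vector field of degree $w_\alpha-k$; the Lie derivative $\Ll_X$ shifts tensor degree by $\deg X$, so $\Ll_{\Pi^\sharp\alpha}\beta$ has degree $w_\alpha+w_\beta-k$; the pairing $\langle\Pi,\alpha\ot\beta\rangle$ is a function of degree $w_\alpha+w_\beta-k$, preserved by $\xd$. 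In the general Leibniz (possibly non-skew) setting the explicit formula differs from the Poisson one, but every variant is assembled from the same natural operations $\Pi^\sharp$, $\Ll$, contraction with $\Pi$, and $\xd$, each commuting predictably with $\Ll_{\nabla_F}$, so the identical degree count carries through and completes the proof.
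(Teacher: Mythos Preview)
Your proof is correct, and for $N\zL$ it coincides with the paper's argument (both of you perform the same coordinate degree count; you additionally phrase it via the composition $\tilde N\circ\zL^\sharp$ of $\GrL$-morphisms, which is a nice touch but not essential).

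For $C(\zL,N)$ you take a genuinely different route. The paper works entirely at the level of the explicit coordinate formula from \cite{Kosmann:1990},
\[
C^{ij}_s=\Lambda^{lj}\pa_{x^l}N^i_s+\Lambda^{il}\pa_{x^l}N^{j}_s-N^l_s\pa_{x^l}\Lambda^{ij}+N^j_l\pa_{x^s}\Lambda^{il}-\Lambda^{lj}\pa_{x^s}N^i_l,
\]
and checks directly that each monomial has degree $w_i+w_j-w_s-k$, using only that $\pa_{x^l}$ lowers degree by $w_l$. You instead stay at the operator level, expanding $C(\zL,N)(\alpha,\beta)$ into its four bracket terms and reducing everything to two lemmas: that $N^t$ preserves degree on $\zW^1(F)$, and that $[\cdot,\cdot]_\Pi$ shifts degree by $-k$ for any weighted $\Pi$. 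Your approach is more conceptual and explains \emph{why} the concomitant is weighted (each building block respects the grading), while the paper's approach is more elementary and sidesteps any need to invoke or justify a formula for the bracket on $1$-forms. The one soft spot in your write-up is the closing sentence about the general Leibniz case: rather than gesturing at ``every variant is assembled from the same natural operations,'' it would be cleaner either to cite a specific formula for $[\cdot,\cdot]_\Pi$ in the non-skew setting, or simply to note that the coordinate expression for $C^{ij}_s$ makes sense and is homogeneous term-by-term regardless of whether $\zL$ is skew---which is exactly what the paper does.
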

\begin{proof}
We calculate easily the degree of $ N\zL$ taking into account that $\deg(\zL^{il})=w_i+w_l-k$ and $\deg(N^j_l)=w_j-w_l$:
$$\deg(\zL^{il}N^j_l\pa_{x^i}\ot\pa_{x^j})=w_i+w_l-k+w_j-w_l-w_i-w_j=-k\,.$$
We have
		$$(x^i,\dot x^j)\circ (\tilde N \circ \zL^{\sharp} )=(x^i,(\zL^{js} N^l_{s})p_l)\,,$$
so the Leibniz tensor
$$ N \zL=\displaystyle(\zL^{js} N^l_{s})\frac{\pa}{\partial x^j}\ot \frac{\pa}{\partial x^l}$$
is of the degree $-k$ and thus is a weighted tensor.
		
For a $(2,1)$-tensor field
$$C(\zL,  N)=\displaystyle C^{ij}_s(x)\frac{\pa}{\pa x^i} \ot \frac{\pa}{\pa x^j}\ot dx^s$$ to be of degree $-k$, it is required that $\deg(C^{ij}_s)=w_i+w_j-w_s-k$. In the coordinate expression we have \cite{Kosmann:1990}
		\[
	C^{ij}_s=\Lambda^{lj}\pa_{x^l}{ N}^i_s+\Lambda^{il}\pa_{x^l}{ N}^{j}_s(x)-{ N}^l_s\pa_{x^l}\Lambda^{ij}
+{ N}^j_l\pa_{x^s}\Lambda^{il}-\Lambda^{lj}\pa_{x^s}{ N}^i_l.	
		\]
Using the fact that $\deg(\pa_{x^i}(f))=\deg(f)-w_i$, by direct calculations we get $\deg(C^{ij}_s)=w_i+w_j-w_s-k$.	

\end{proof}
\noindent Any  $(2,1)$-tensor field of degree $-k$,
\be\label{e7}C=C^{ij}_s(x)\frac{\pa}{\pa x^i} \ot \frac{\pa}{\pa x^j}\ot \xd x^s\ee 
defines by contraction a vector bundle morphism
$$\tilde C:\we^2\sT^*[k]F\to\sT^*[k]F$$	
and \emph{vice versa}.
\begin{proposition}	
A $(2,1)$-tensor field (\ref{e7}) on a graded bundle $F$ of degree $k$ is weighted if and only if the associated map $\tilde C$ is a morphism of $\GrL$-bundles.
\end{proposition}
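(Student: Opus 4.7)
The plan is to imitate the coordinate-level arguments already used in this section for $\zL^\sharp$ (Proposition \ref{p2}) and for $\zw^\flat$, reducing the claim to a single degree match between source and target.

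First I would fix bi-homogeneous coordinates. On $\sT^*[k]F$ use the phase-lift coordinates $(x^i,p_j)$ with $\deg(x^i)=w_i$ and $\deg(p_j)=k-w_j$. On $\we^2\sT^*[k]F$ use antisymmetric fiber coordinates $q_{ij}=-q_{ji}$ representing $\tfrac{1}{2}q_{ij}\,\xd x^i\we \xd x^j$, so that each $q_{ij}$ has degree $(k-w_i)+(k-w_j)=2k-w_i-w_j$. Since $\tilde C$ is defined by contracting with $C$ and factors through the antisymmetric part of the two contravariant slots, I may and do assume $C^{ij}_s=-C^{ji}_s$ for the computation.

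Next I would write $\tilde C$ in these coordinates. The contraction formula gives
$$(x^i,p_s)\circ \tilde C=\Big(x^i,\ \tfrac{1}{2}\,C^{ij}_s(x)\,q_{ij}\Big).$$
This is automatically linear in the fiber coordinates because $C$ is a tensor field, so it is a vector bundle morphism, and the only outstanding condition for it to be a $\GrL$-bundle morphism is compatibility with the two weight vector fields on source and target. That compatibility amounts to the single equation
$$\deg\!\big(\tfrac{1}{2}\,C^{ij}_s(x)\,q_{ij}\big)=\deg(p_s)=k-w_s,$$
which, since $\deg(q_{ij})=2k-w_i-w_j$, is equivalent to $\deg(C^{ij}_s)=w_i+w_j-w_s-k$.

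Finally I would compare with the weighted condition $\deg(C)=-k$ from Definition \ref{wt}. Using $\deg(\pa_{x^i})=-w_i$ and $\deg(\xd x^s)=w_s$, the tensor $C=C^{ij}_s(x)\,\pa_{x^i}\ot\pa_{x^j}\ot\xd x^s$ has degree $\deg(C^{ij}_s)-w_i-w_j+w_s$, and this equals $-k$ precisely when $\deg(C^{ij}_s)=w_i+w_j-w_s-k$. The two conditions thus coincide and the equivalence follows. The only step that needs care is the correct propagation of the phase shift $p_j\mapsto t^{k-w_j}p_j$ from $\sT^*[k]F$ to $\we^2\sT^*[k]F$, which doubles the shift on the fiber coordinates $q_{ij}$; once this is set up, the rest of the argument is a direct transcription of the proof of Proposition \ref{p2}.
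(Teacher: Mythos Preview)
Your argument is correct and follows essentially the same route as the paper's proof: both reduce the question to the coordinate condition $\deg(C^{ij}_s)=w_i+w_j-w_s-k$ by computing the degree of the fiber coordinate $p_{ls}$ (your $q_{ij}$) on $\we^2\sT^*[k]F$ as $2k-w_l-w_s$ and matching it against $\deg(p_s)=k-w_s$. Your version is slightly more explicit about the antisymmetry assumption and the ``if and only if'' structure, but the substance is identical.
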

\begin{proof}
The tensor $C$ is weighted if and only if $\deg(C^{ij}_s)=w_i+w_j-w_s-k$.
As the map $\tilde C$ in local coordinates looks like
$$(x^i,p_j)\circ\tilde C=(x^i,\sum_{l,s}C^{ls}_j(x)(p_{ls}))\,,$$
where $p_{ls}$ are linear coordinates in $\we^2\sT^*[k]F$,
$$\deg\left(\sum_{l,s}C^{ls}_j(p_{ls})\right)=w_l+w_s-w_j-k+(k-w_l)+(k-w_s)=k-w_j=\deg(p_j)\,.$$

\end{proof}

\subsection{Weighted algebroids and groupoids}
Motivated by our papers \cite{Bruce:2015,Bruce:2016}, we propose the following.
\begin{definition}
A \emph{weighted groupoid of degree $k$} (\emph{weighted algebroid of degree $k$}) is a graded bundle $(F,h)$ of degree $k$ over a submanifold $M$ equipped additionally with a Lie groupoid structure $F\rightrightarrows B$ (resp. Lie algebroid structure over $B$) such that the maps $h_t$ act as Lie groupoid (resp. Lie algebroid) morphisms for all $t\in\R$.
\emph{Morphisms} of weighted groupoids (algebroids) are morphisms $\zf:F_1\to F_2$ of Lie groupoid structures (resp. Lie algebroid structures) which intertwine the corresponding homogeneity structures, $h^2_t\circ\zf=\zf\circ h^1_t$.
\end{definition}
\begin{proposition}
The base $B$ of a weighted groupoid (Lie algebroid) $\zt:F\to M$ is canonically a graded subbundle in $F$. Similarly, $M$ is canonically a Lie subgroupoid (Lie subalgebroid) of $F$. We have a commutative diagram for the weighted groupoid $F$

\begin{equation*}
\xymatrix @R20mm @C20mm{
   F \ar[r]^{h_0} \ar@<-2pt>[d] \ar@<2pt>[d] & M \ar@<-2pt>[d] \ar@<2pt>[d]\\
   B \ar[r]^{h_0} & M\cap B\,.
}
\end{equation*}

\end{proposition}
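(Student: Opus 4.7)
The plan is to unwind the defining compatibility: every $h_t:F\to F$ is a Lie groupoid (respectively Lie algebroid) morphism, so it must intertwine all the structural maps, and this quickly delivers both claims.

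First I would produce the induced homogeneity structure on $B$. Since a groupoid morphism sends units to units, composing the identity embedding $\epsilon:B\to F$ with $h_t$ yields a smooth map $\bar h_t=\za\circ h_t\circ\epsilon:B\to B$ characterised by $h_t\circ\epsilon=\epsilon\circ\bar h_t$, where $\za:F\to B$ is the source. The monoid axiom $h_{ts}=h_t\circ h_s$ then transfers to $\bar h_{ts}=\bar h_t\circ\bar h_s$, so $\bar h$ is a homogeneity structure on $B$. Identifying $B$ with the unit submanifold $\epsilon(B)\subset F$ exhibits $B$ as an $h_t$-invariant submanifold, i.e. as a graded subbundle, and Theorem \ref{t5} then also gives smoothness.

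Next, for $M=h_0(F)$, which is a smooth submanifold by the general theory of homogeneity structures, I would check closure under the groupoid operations. Since $h_0$ is itself a groupoid morphism, for any composable pair $g,g'\in M$ one has $g\cdot g'=h_0(g)\cdot h_0(g')=h_0(g\cdot g')\in M$, and similarly $g^{-1}=h_0(g^{-1})\in M$. The source map restricts from $F$ to $M$ via $\za\circ h_0=\bar h_0\circ\za$ (analogously for the target), so that $M\rightrightarrows M\cap B$, with $M\cap B=\bar h_0(B)$, is a Lie subgroupoid. The displayed commutative diagram is then precisely the diagram of source (or target) maps on either side of $h_0$.

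The Lie algebroid case is entirely parallel: a Lie algebroid morphism $h_t:F\to F$ covers a base map $\bar h_t:B\to B$, the unit embedding is replaced by the zero section, and closure under composition and inversion is replaced by closure under anchor, bracket and scalar multiplication; the argument then proceeds verbatim. I do not anticipate a genuine obstacle, because the proposition is essentially a direct unpacking of the definition of a weighted groupoid/algebroid. The only point that requires any thought is the smoothness of $\bar h_t$, which is automatic from the explicit formula $\bar h_t=\za\circ h_t\circ\epsilon$.
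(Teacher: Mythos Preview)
Your approach is essentially the same as the paper's: use that each $h_t$ is a groupoid morphism to show $B$ is $h$-invariant (hence a graded subbundle) and that $M=h_0(F)$ is closed under the groupoid operations. However, you stop short of one step that the paper does carry out, and it is precisely the step that makes $M\rightrightarrows M\cap B$ a \emph{Lie} subgroupoid rather than merely an abstract subgroupoid: you do not check that $\za|_M:M\to M\cap B$ (and likewise $\zb|_M$) is a surjective submersion. Closure under multiplication and inversion, together with $M$ being a submanifold, does not by itself guarantee this.

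The paper handles it as follows: since $\za:F\to B$ and $h_0:B\to h_0(B)$ are surjective submersions, so is $h_0\circ\za=\za\circ h_0:F\to h_0(B)$; factoring through the surjective submersion $h_0:F\to M$ forces $\za|_M$ to be a surjective submersion onto $h_0(B)$. The paper then verifies $h_0(B)=M\cap B$ by a short direct argument. Your sentence ``so that $M\rightrightarrows M\cap B$ \dots\ is a Lie subgroupoid'' hides exactly this work, and your closing remark that ``the only point that requires any thought is the smoothness of $\bar h_t$'' underestimates where the content lies. (Also, the appeal to Theorem~\ref{t5} is misplaced: that theorem concerns weighted distributions and fibrations, not the fact that an $h$-invariant submanifold is a graded subbundle.)
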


\medskip\noindent
\begin{proof} The proof is for weighted groupoids. For weighted Lie algebroid it is analogous. For weighted groupoids, $h_t$ is a Lie groupoid morphism for all $t\in\R$, that is
\[
h_t(g)=h_t(\za(g)\cdot g)=h_t(\za(g))\cdot h_t(g)\,,
\]
where $\za:F\to B$ is the source map, which implies $h_t(\za(g))=\za (h_t(g))$. Similarly, $h_t(\zb(g))=\zb (h_t(g))$ for the target map $\zb:F\to B$. The base $B$ is therefore invariant with respect to all $h_t$, so it is a graded subbundle of $F$, $B\to h_0(B)$. As $h_0\circ\za=\za\circ h_0$ and $h_0\circ\zb=\za\circ h_0$, we have $\za(M)=\zb(M)=h_0(B)$. Since $\za$ is a surjective submersion of $F$ onto $B$ and $h_0$ is a surjective submersion of $B$ onto its submanifold $h_0(B)$, the map $h_0\circ\zb:F\to h_0(B)$ is a surjective submersion. This means that $\sT h_0\circ\sT\za$ has $\sT h_0(B)$ as its image. But $h_0\circ\za=\za\circ h_0$, so $\za\circ h_0$ is also a surjective submersion. Hence,
$$(\sT\za\circ\sT h_0)(\sT F)=\sT\za(\sT M))=\sT(\za(M))\,,$$
so $\za_{|M}:M\to\za(M)=h_0(B)$ is a surjective submersion. Similarly, $\zb_{|M}$ is a surjective submersion. It remains to show that the groupoid multiplication $g_1\cdot g_2\in M$ if only $g_1,g_2\in M$ and that $\za(M)=\zb(M)=h_0(B)$ equals $M\cap B$. For, suppose $g_1,g_2\in M$. Because $h_0$ is a Lie groupoid morphism, $$h_0(g_1\cdot g_2)=h_0(g_1)\cdot h_0(g_2)=g_1\cdot g_2\,,$$ that shows $g_1\cdot g_2\in M$.
Finally, it is obvious that $\za(M)=h_0(B)=(h_0\circ\za)(F)\subset M\cap B$. Let us take $p\in M\cap B$. Then we have
$$(h_0\circ\za)(p)=h_0(\za(p))=h_0(p)=p\,,$$
which means that $(M\cap B)\subset h_0(B)$.

\end{proof}

\noindent If the Lie theory is concerned, we have the following.
\begin{theorem}\cite[Theorem 4.1]{Bruce:2015}
The infinitesimal part of a weighted groupoid $G$ of degree $k$ with respect to a homogeneity structure $h$ on $G$ is the Lie algebroid $\Lie(G)$ which is weighted of degree $k$ with respect to the induced homogeneity structure $\Lie(h_t):A(G)\to A(G)$, where $\Lie(h_t)$ is the Lie algebroid morphism associated with the Lie groupoid morphism $h_t:G\to G$ .
\end{theorem}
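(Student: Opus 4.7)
The plan is to apply the Lie functor to the entire monoid action $h: \R \times G \to G$, using the hypothesis that each $h_t: G \to G$ is a Lie groupoid morphism. For every $t \in \R$, functoriality produces a Lie algebroid morphism $\Lie(h_t): \Lie(G) \to \Lie(G)$, covering the graded-bundle morphism $h_t|_B: B \to B$ on the unit submanifold (which is itself $h$-invariant by the preceding proposition). The identities
\[
\Lie(h_t) \circ \Lie(h_s) \;=\; \Lie(h_t \circ h_s) \;=\; \Lie(h_{ts}), \qquad \Lie(h_1) \;=\; \mathrm{id}_{\Lie(G)},
\]
follow purely from functoriality together with the monoid law $h_t \circ h_s = h_{ts}$, so $\{\Lie(h_t)\}_{t \in \R}$ is automatically a set-theoretic $(\R,\cdot)$-action on $\Lie(G)$ by Lie algebroid morphisms. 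This already produces the algebraic skeleton of the claimed weighted algebroid structure.

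For the analytic content I would identify $\Lie(G)$ canonically with the vector subbundle $\ker(\sT\alpha)|_B \subset \sT G|_B$, where $\alpha: G \to B$ is the source. Since $\alpha \circ h_t = h_t|_B \circ \alpha$, one has $\sT\alpha \circ \sT h_t = \sT(h_t|_B) \circ \sT\alpha$, so the tangent lift $\sT h_t$ preserves $\ker(\sT\alpha)|_B$, and its restriction there agrees with $\Lie(h_t)$. Smoothness of the joint map $(t,v) \mapsto \Lie(h_t)(v)$ on $\R \times \Lie(G)$ is then inherited at once from smoothness of $\sT h: \R \times \sT G \to \sT G$, which is automatic from smoothness of $h$ itself.

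For the degree statement, working locally in adapted homogeneous coordinates $(x^i)$ on $G$ with $h_t(x^i) = t^{w_i} x^i$ and $0 \le w_i \le k$, the tangent lift acts on the fibre coordinates of $\sT G$ by $\dot x^j \mapsto t^{w_j} \dot x^j$, so it preserves the weight grading. Restriction to $\ker(\sT\alpha)|_B$ therefore endows $\Lie(G)$ with a homogeneity structure of degree at most $k$, the bound being attained exactly when $G$ carries coordinates of weight $k$ transverse to $B$.

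The main obstacle is the behaviour at $t = 0$, where $h_0$ projects $G$ onto its Lie subgroupoid $M \subset G$: one must verify that the abstractly defined $\Lie(h_0)$ coincides with the tangent-lift restriction and that the family $t \mapsto \Lie(h_t)$ extends smoothly across $t = 0$. The tangent-lift reformulation resolves both points simultaneously, reducing the entire claim to smoothness of $\sT h$ together with the standard description of $\Lie(G)$ as the bundle of source-vertical tangent vectors along the identities of $G$.
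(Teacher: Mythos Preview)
The paper does not supply its own proof of this statement: it is quoted directly as \cite[Theorem 4.1]{Bruce:2015}, so there is nothing in the present text to compare your argument against. Your sketch is the standard one and is correct: apply the Lie functor to each $h_t$, read off the monoid law from functoriality, and realise $\Lie(h_t)$ concretely as the restriction of $\sT h_t$ to $\ker(\sT\alpha)|_B$ so that joint smoothness in $(t,v)$ and the weight bounds are inherited from the tangent lift $\dt h$. The only point worth tightening is your final clause about the degree: as stated the theorem asserts degree $k$, and in the paper's conventions the degree of a homogeneity structure is by definition the maximal weight appearing, so strictly speaking the induced structure on $\Lie(G)$ has degree \emph{at most} $k$; your caveat is accurate, and the cited reference handles this by working with compatible bi-homogeneous charts on $G$ in which the source-vertical directions along $B$ inherit the full range of weights.
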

\begin{remark}
Note that in \cite{Bruce:2016} the degree of a weighted algebroid is one degree smaller than here. Of course, this definition  requires \emph{implicite} that $F$ is equipped additionally with a vector bundle structure $F\to N$ associated with a homogeneity structure $h'$, which makes $F$ into a $\GrL$-bundle. Moreover, as $h_t\circ h'_s=h'_s\circ h_t$,  all $h_t$ map $N$ into {\cbl $N$,} and $N$ is a graded bundle over $M'=h_0(N)=h'_0(M)$. Similarly, $h'_t$ maps $M$ into $M$, and $M$ is canonically a vector bundle over $M'$. Note that, for $t\ne 0$, the Lie algebroid morphism $h_t:F\to F$, is a morphism of Lie algebroids over the diffeomorphism $h_t:N\to N$, so it maps sections onto sections of the vector bundle $h'_0:F\to N$  and is characterized by
$$h_t[\zs_1,\zs_2]=[h_t(\zs_1),h_t(\zs_2)]$$ for sections $\zs_1,\zs_2$, where
$$h_t(\zs)(x)=h_t(\zs(h_{t^{-1}}(x)))\,.$$ Here, we understand $h_t$, with some abuse of notation, as a diffeomorphism of $F$ as well as a diffeomorphism of $N$.

\smallskip\noindent
Indeed, the property required for the anchor $\zr:F\to\sT N$, namely
\be\label{e5}\sT h_t\circ\zr=\zr\circ h_t\,,\ee
follows automatically. We have
$$h_t\left(f[\zs_1,\zs_2]+(\zr(\zs_1)(f))\circ h_{t^{-1}}\circ h_t(\zs_2)\right)=h_t[\zs_1,f\zs_2]=[h_t(\zs_1),h_t(f\zs_2)]=[h_t(\zs_1),f\circ h_{t^{-1}}\circ h_t(\zs_2)]$$
which implies
$$\zr(\zs_1)(f)\circ h_{t^{-1}}=\zr(h_t(\zs_1))(f\circ h_{t^{-1}})$$
for all $f\in C^\infty(N)$, and it is equivalent to (\ref{e5}).

\end{remark}
\begin{example}(\cite[Proposition 4.12]{Bruce:2016}  and \cite[Example 3.10]{Bruce:2015})
If $F$ is a Lie groupoid (Lie algebroid), then $\sT^kF$ is canonically a weighted groupoid (algebroid) of degree $k$.
The Lie algebroids $\sT^kF$ are examples of higher Lie algebroids in the sense of J\'o\'zwikowski and Rotkiewicz \cite{Jozwikowski:2015}.
\end{example}
\noindent For the Lie groupoid structure on $\sT^kG$ we refer to \cite[12.13]{Kolar:1996}. For a Lie algebroid structure see e.g. \cite[Theorem 3]{Wamba:2012}.
\begin{proposition}(\cite[Proposition 2.19 and 3.6]{Bruce:2015}
Let $F_k\to M$ be a weighted groupoid (algebroid over $N$) of degree $k$. Then the reduced graded bundles  $F_i$ are canonically  weighted groupoids (algebroids) of degree $i$, $i=0,\dots,k$, and the tower of affine fibrations (see (\ref{afffib}))
$$
F=F_{k} \stackrel{\tau^{k}}{\longrightarrow} F_{k-1} \stackrel{\tau^{k-1}}{\longrightarrow}   \cdots \stackrel{\tau^{3}}{\longrightarrow} F_{2} \stackrel{\tau^{2}}{\longrightarrow}F_{1} \stackrel{\tau^{1}}{\longrightarrow} F_{0}=M
$$
consists of Lie groupoid (Lie algebroid) morphisms. In particular, $M=h_0(F_k)$ is a Lie subgroupoid (Lie subalgebroid) in $F$ and $h_0:F_k\to M$ is a Lie groupoid (Lie algebroid) morphism.
\end{proposition}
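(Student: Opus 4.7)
The plan is to exploit the fact that each reduced graded bundle $F_i$ admits an intrinsic description inside $F_k$ in terms of the weight vector field, and then to transfer the groupoid (algebroid) operations along the canonical projection $\zt^{k,i}:F_k\to F_i$, using the compatibility of $h_t$ with those operations.

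First, I would describe $F_i$ invariantly. In any homogeneous chart $(x^A,y^a_w)_{w\le k}$ on $F_k$, the coordinate vector fields $\{\pa_{y^a_w}:w>i\}$ span an involutive vertical distribution $V^{>i}\subset\sT F_k$, and $F_i$ is its leaf space, with $\zt^{k,i}:F_k\to F_i$ the canonical surjective submersion; equivalently, $C^\infty(F_i)$ is the subsheaf of $C^\infty(F_k)$ consisting of functions that are polynomial of weight $\le i$ in the homogeneous coordinates. Because $[\nabla_{F_k},\pa_{y^a_w}]=-w\,\pa_{y^a_w}$, each $h_t$ preserves $V^{>i}$ and therefore induces a smooth map $h^{(i)}_t$ on $F_i$ characterised by $h^{(i)}_t\circ\zt^{k,i}=\zt^{k,i}\circ h_t$. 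This gives a homogeneity structure on $F_i$ whose weight vector field is the degree-$\le i$ part of $\nabla_{F_k}$, making $(F_i,h^{(i)})$ a graded bundle of degree $i$.

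Next, I would show that the groupoid structure descends. Since $(F_k,h)$ is a weighted groupoid, the source, target, multiplication, inversion and unit maps are all equivariant with respect to $h_t$. Combined with the previous proposition, which produces a graded subbundle $B\subset F_k$ and realises $\za,\zb:F_k\to B$ as graded bundle morphisms, this equivariance implies each structure map pulls back functions of weight $\le i$ to functions of weight $\le i$. Hence $\za,\zb$ descend to $\za_i,\zb_i:F_i\to B_i$, where $B_i=\zt^{k,i}(B)$, and the multiplication descends to a smooth $m_i:F_i\ti_{B_i}F_i\to F_i$; inversion and the unit section descend in the same way. The groupoid axioms on $F_i$ follow from those on $F_k$ by a diagram chase using the surjectivity of $\zt^{k,i}$, and $h^{(i)}_t$ being a groupoid morphism follows by chasing the corresponding identities for $h_t$ through $\zt^{k,i}$. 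Thus $(F_i,h^{(i)})$ is a weighted groupoid of degree $i$. The projection $\zt^{i+1,i}:F_{i+1}\to F_i$ is, by construction, both a graded bundle morphism and a groupoid morphism, and the case $i=0$ specialises to $h_0:F_k\to M=F_0$, realising $M$ as a Lie subgroupoid.

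The algebroid version proceeds in parallel. Here $F_k$ additionally carries a vector bundle structure $F_k\to N$ compatible with $h$ (making it a $\GrL$-bundle), which automatically descends along $\zt^{k,i}$ to a vector bundle structure on $F_i$. The anchor $\zr:F_k\to\sT N$ and the bracket on sections satisfy, respectively, $\sT h_t\circ\zr=\zr\circ h_t$ and $h_t[\zs_1,\zs_2]=[h_t\zs_1,h_t\zs_2]$, as noted in the remark preceding the proposition, so both descend to the corresponding operations on $F_i$. The principal technical obstacle in either case is verifying smoothness of the descended operations; this reduces to the Grabowski--Rotkiewicz structure theorem, by which the fibers of $\zt^{k,i}$ are affine spaces and transition functions on graded bundles are polynomial, ensuring that $h_t$-invariant smooth structures on $F_k$ descend unambiguously to $F_i$.
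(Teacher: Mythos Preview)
The paper does not prove this proposition at all: it is stated with a bare citation to \cite[Proposition~2.19 and 3.6]{Bruce:2015} and is followed immediately by a Remark, so there is no ``paper's own proof'' to compare against. Your outline is a reasonable reconstruction of how the cited result is established: describe $F_i$ as the quotient of $F_k$ by the subsheaf of functions of weight $>i$, use that $h_t$-equivariance of the groupoid structure maps makes them graded-bundle morphisms, and then invoke the polynomiality of homogeneous functions on $\N$-graded bundles (Grabowski--Rotkiewicz) to see that a graded morphism sends weight-$\le i$ data to weight-$\le i$ data, hence factors through $F_i$.

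One point deserves more care. When you say ``each structure map pulls back functions of weight $\le i$ to functions of weight $\le i$'', for the multiplication $m:F_k\times_BF_k\to F_k$ you should spell out why the \emph{domain} $F_k\times_BF_k$ is itself a graded bundle (it is, since $B$ is a graded subbundle and the source/target are graded morphisms by the preceding proposition) and why composability is preserved under $\zt^{k,i}$ (i.e.\ that $F_i\times_{B_i}F_i$ really is the image of $F_k\times_BF_k$). This follows from the same polynomiality argument, but it is the step most likely to hide a gap if left implicit. With that filled in, your argument is correct and matches the strategy of the cited reference.
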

\begin{remark}
The bundles $F_{i}\to F_{i-1}$ are affine bundles, but for a Lie algebroid $F$, each $F_i$ has also a vector bundle structure over  $N_i$ and maps $\zt^i:F_{i}\to F_{i-1}$ are vector bundle morphisms.
\end{remark}
The definition of a weighted algebroid can be extended to \emph{weighted general algebroid} in an obvious way.
A slight modification of \cite[Proposition 4.4]{Bruce:2016} gives the following characterizations of weighted general algebroids of degree $k$.
\begin{proposition}
Let $F$ be a $\GrL$-bundle of degree $k$ with the graded bundle projection $F\to M$ and vector bundle projection $F\to N$. Let $F^*$ be the dual of $F$ with respect to the vector bundle structure. There is a one-to-one correspondence between weighted general algebroid structures on $F$ and
\begin{enumerate}
\item morphisms of triple graded bundles $\ze:\sT^*[k]F\to\sT F^*[k]$, covering the identity on the $\GrL$-bundle $F^*[k]$.
\item $2$-contravariant tensors $\zL$ on $F^*[k]$ of bi-degree $(-k,-1)$.
\item a general algebroid bracket $[\cdot,\cdot]_\zL$ on sections of $F\to N$ which is of degree $-k$, i.e. the bracket $[\zs_1,\zs_2]_\zL$ of sections $\zs_1$ and $\zs_2$ of degrees $w_1$ and $w_2$, respectively,
    is of degree $w_1+w_2-k$.
\end{enumerate}
\end{proposition}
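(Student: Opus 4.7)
The plan is to derive the weighted version of the classical correspondence (as in \cite[Proposition 4.4]{Bruce:2016}) by splitting the proof into a ``bare'' algebroid part, which is already known, and a degree bookkeeping part. The non-weighted skeleton is:  for a vector bundle $F\to N$, general algebroid brackets on $\Sec(F)$ are in bijection with $(2,0)$-tensors $\zL$ on $F^*$ via $\zi([\zs_1,\zs_2]_\zL)=\{\zi(\zs_1),\zi(\zs_2)\}_\zL$, and every such $\zL$ corresponds, through the Tulczyjew isomorphism $R_F:\sT^*F\to\sT F^*$ of double vector bundles, to a double vector bundle morphism $\ze=\zL^\sharp\circ R_F^{-1}:\sT^*F\to\sT F^*$ covering the identity on $F^*$. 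This establishes (1)$\Leftrightarrow$(2)$\Leftrightarrow$(3) at the level of unweighted objects, so the real content of the proposition is the equivalence of the three degree conditions.

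First I would set up the triple grading. Since $F$ is a $\GrL$-bundle of degree $k$ with weight vector fields $\nabla_F^{\rm vb}$ (the Euler field of the vector bundle $F\to N$) and $\nabla_F$ (of the graded structure of degree $k$), both $\sT F$ and $\sT^*F$ acquire three commuting homogeneity structures: the tangent/cotangent Euler field, and the tangent/phase-lifts of $\nabla_F^{\rm vb}$ and $\nabla_F$. After replacing the naive cotangent lift by the $k$-shifted lift, $\sT^*[k]F$ becomes an honest triple graded bundle; dually, $F^*[k]$ is a $\GrL$-bundle of degree $k$, and $\sT F^*[k]$ is a triple graded bundle with the analogous three weight vector fields. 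The Tulczyjew isomorphism $R_F$ and its shifted version $R_F[k]:\sT^*[k]F\to\sT F^*[k]$ intertwine all three homogeneity structures by direct inspection in adapted coordinates, so morphisms $\ze$ of triple graded bundles over $\id_{F^*[k]}$ correspond precisely to those $\zL^\sharp:\sT^*F^*[k]\cong\sT^*[k]F\to\sT F^*[k]$ which respect the triple grading; a direct coordinate computation shows this is equivalent to $\zL$ being a $(2,0)$-tensor on $F^*[k]$ of bi-degree $(-k,-1)$. This gives (1)$\Leftrightarrow$(2) in the weighted setting.

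For (2)$\Leftrightarrow$(3), I would use the identification of the degree of a section with the degree of its associated linear function from Theorem \ref{sections1} (applied to $F^*[k]$): a section $\zs$ of $F\to N$ has degree $w$ as a section of the $\GrL$-bundle $F$ iff $\zi(\zs)$ is a function of degree $w+k$ on $F^*[k]$ (the degree contributed by linearity being $+k$ after the shift). If $\zs_1,\zs_2$ have degrees $w_1,w_2$, then $\zi(\zs_1),\zi(\zs_2)$ have degrees $w_1+k, w_2+k$ as functions on $F^*[k]$, their Leibniz-tensor bracket $\{\zi(\zs_1),\zi(\zs_2)\}_\zL=\zi([\zs_1,\zs_2]_\zL)$ has degree $w_1+w_2+2k+\deg(\zL)$ on $F^*[k]$; requiring this to equal $(w_1+w_2-k)+k$ (the function-degree of $\zi([\zs_1,\zs_2]_\zL)$ when the bracket has degree $-k$) forces $\deg(\zL)=-k$ in the graded direction, while linearity in the vector bundle direction is $(-1)$, giving bi-degree $(-k,-1)$; conversely the same computation runs backwards.

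The main obstacle is the bookkeeping of the shift: one has to keep straight which homogeneity structure (the original phase lift, the $k$-shifted one, or the vector bundle Euler) one is working with on each of $\sT^*F$, $\sT F^*$, $F^*$, and verify that $R_F$ respects all three and that the shift $[k]$ converts the $\Z$-graded duality of Section~3.1 into a genuine graded bundle in a way that is compatible with $\zL^\sharp$. Once this coordinate bookkeeping is carried out in adapted bi-homogeneous coordinates $(x^A,y^a_w,z^i,u^j_s)$ and their conjugates, all three equivalences reduce to the identity $\deg(\zL^{\alpha\beta})+\deg(p_\alpha)+\deg(p_\beta)=-k$ on bi-homogeneous coordinates of $F^*[k]$, which is exactly the bi-degree $(-k,-1)$ condition.
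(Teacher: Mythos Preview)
The paper does not actually prove this proposition; it only states that the result is ``a slight modification of \cite[Proposition 4.4]{Bruce:2016}'' and leaves the details to the reader. Your strategy---taking the unweighted correspondence from \cite{Bruce:2016,Grabowski:1999} as given and then tracking bi-degrees through the Tulczyjew isomorphism and through Theorem~\ref{sections1}---is precisely what that phrase invites, so in outline your approach coincides with the paper's intended one.

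However, the degree bookkeeping in your (2)$\Leftrightarrow$(3) step does not close. You correctly compute that $\{\zi(\zs_1),\zi(\zs_2)\}_\zL$ has graded degree $w_1+w_2+2k+\deg(\zL)$ on $F^*[k]$ and set this equal to $(w_1+w_2-k)+k=w_1+w_2$, but then assert $\deg(\zL)=-k$; the equation actually gives $\deg(\zL)=-2k$. This is exactly the ``bookkeeping of the shift'' obstacle you flag at the end: the $[k]$-shift raises the weight of every linear fiber coordinate $p$ on $F^*$ by $k$, hence lowers the weight of each $\partial_p$ by $k$, so a $2$-contravariant tensor changes degree by $-2k$ when passing from the $\Z$-graded dual $F^*$ to $F^*[k]$. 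If you instead run the computation on the unshifted $\ZGrL$-bundle $F^*$ using Theorem~\ref{sections1} (where $\zi(\zs)$ has degree $w$ rather than $w+k$) and only translate to $F^*[k]$ at the end, the double counting disappears and the numbers line up. The architecture of your argument is sound, but this step needs to be redone before the claimed equivalence is established.
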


By our definition of $\VB$-structures, weighted groupoids (algebroids) of degree 1 are called $\VB$-groupoids (algebroids).
The original concept of $\VB$-algebroid was introduced by Pradines \cite{Pradines:1974,Pradines:1988} and it has been
further studied by Mackenzie \cite{Mackenzie:2005}, Gracia-Saz and Mehta \cite{Gracia:2010} among others.
The concept of a $\VB$-groupoid one can find already in \cite{Mackenzie:1992,Mackenzie:2000} and
\cite[Section 2.1]{Mackenzie:2005}, where they are understood as double Lie groupoids for which one structure is a vector bundle. The $\VB$-algebroids and $\VB$-groupoids have shown to be especially important in the
infinitesimal description of Lie groupoids equipped with multiplicative geometric
structures and as geometric models for representations up to homotopy \cite{Bruce:2018,Gracia-Saz:2010,Gracia-Saz:2017}.
The original definitions are quite complicated and refer to $\VB$-groupoids ($\VB$-algebroids) as  Lie groupoid (Lie algebroid) objects in the category of vector bundles.
Only in \cite{Bursztyn:2016} it was discovered that the use of vector bundle characterization in terms of regular homogeneity structures, i.e. of degree $1$ \cite{Grabowski:2009}, substantially simplifies the definition. As a result, we have an equivalence of traditional definitions with the ones proposed in this paper.

\subsection{Weighted principal bundles}
Motivated by our paper \cite{Bruce:2016} we propose the following.
\begin{definition} A \emph{weighted $G$-principal bundle} $\tau:P\to M$ is a $G$-principal bundle equipped additionally with a homogeneity structure $h$ such that the $G$-action  and $\R$-action commute.
\end{definition}

\begin{proposition}\label{prop:principal}
If the principal $G$-action and homogeneity structure on a $G$-principal bundle $\tau:P\to M$ commute, then $P_0=h_0(P)$ is $G$-invariant, and therefore is itself a principal bundle over $M_0=P_0/G$, and $h_0:P\to P_0$ is a principal bundle morphism. Moreover $P=M\times_{M_0}P_0$ is the pull-back bundle $(h^M_0)^*P_0$ and the $G$-action on $M\times_{M_0}P_0$ reduces to the action on the factor $P_0$.
\end{proposition}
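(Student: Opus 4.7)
The plan is to first descend the homogeneity structure from $P$ to $M$. Because the right $G$-action commutes with every $h_t$, the map $h_t$ preserves $\tau$-fibres and hence descends to a unique smooth map $h^M_t:M\to M$ characterized by $\tau\circ h_t=h^M_t\circ\tau$. The family $\{h^M_t\}$ is a homogeneity structure on $M$, and $M_0:=h^M_0(M)$ is a smooth submanifold of $M$ as the image of a smooth idempotent. Similarly, $P_0=h_0(P)$ is a smooth submanifold of $P$, and the inclusion $\tau(P_0)=h^M_0(\tau(P))=M_0$ is automatic.

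For the $G$-invariance and the principal bundle structure on $P_0$, note that for $p=h_0(q)\in P_0$ and $g\in G$, commutativity gives $g\cdot p=g\cdot h_0(q)=h_0(g\cdot q)\in P_0$. The restricted $G$-action on $P_0$ is free (inherited from $P$), with orbit space $M_0$. To verify local triviality I choose a local trivialization $\tau^{-1}(U)\simeq U\times G$. The $G$-equivariance of $h_t$ forces the normal form
\begin{equation*}
h_t(u,g)=\bigl(h^M_t(u),\,\psi_t(u)\cdot g\bigr)
\end{equation*}
for a smooth map $\psi_t:U\to G$, and the idempotency $h_0\circ h_0=h_0$ yields $\psi_0|_{U\cap M_0}=e$. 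Hence $P_0\cap\tau^{-1}(U)=(U\cap M_0)\times G$, which simultaneously gives a local trivialization of $P_0\to M_0$ and makes $h_0:P\to P_0$ visibly a principal bundle morphism over $h^M_0:M\to M_0$.

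For the pull-back identification I would define $\Phi:P\to M\times_{M_0}P_0$ by $\Phi(p)=(\tau(p),h_0(p))$, which lands in the fibre product because $\tau|_{P_0}\circ h_0=h^M_0\circ\tau$. For injectivity: if $\Phi(p)=\Phi(p')$ then $p'=p\cdot g$ for some $g\in G$, and then $h_0(p)\cdot g=h_0(p\cdot g)=h_0(p)$ forces $g=e$ by freeness. For surjectivity, in the normal form above, given $m\in U$ and $p_0=(h^M_0(m),g_0)\in P_0$, the point $p:=(m,\psi_0(m)^{-1}g_0)$ satisfies $\tau(p)=m$ and $h_0(p)=p_0$; smoothness of $\psi_0$ then gives a smooth local inverse of $\Phi$, so $\Phi$ is a diffeomorphism. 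The identity $\Phi(p\cdot g)=\Phi(p)\cdot g$ is immediate, which is precisely the statement that the $G$-action on the pull-back reduces to the action on the $P_0$-factor.

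The main obstacle I anticipate is extracting the normal form $h_t(u,g)=(h^M_t(u),\psi_t(u)g)$ in a $G$-equivariant local trivialization and using it consistently both for the local triviality of $P_0\to M_0$ and for building the smooth local inverse of $\Phi$; the algebraic content is light, but keeping track of the two descents, the idempotency relations on $\psi_0$, and the compatible smooth structures requires careful bookkeeping.
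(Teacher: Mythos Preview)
Your proof is correct and follows essentially the same route as the paper's: descend $h$ to a homogeneity structure $h^M$ on $M$, show $P_0$ is $G$-invariant and hence a principal bundle over $M_0$, and identify $P$ with the fibre product via $\Phi(p)=(\tau(p),h_0(p))$, checking bijectivity and smoothness in local trivializations. Your treatment is in fact slightly more careful than the paper's: the paper passes to a trivial bundle and writes $M\times_{M_0}(M_0\times G)=M\times G=P$ without tracking the possible twist in $h_0$, whereas your explicit normal form $h_t(u,g)=(h^M_t(u),\psi_t(u)\,g)$ and the local inverse $(m,g_0)\mapsto(m,\psi_0(m)^{-1}g_0)$ make the diffeomorphism $\Phi$ transparent.
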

\begin{proof}
Let $p_0$ be an element of $P_0$, then for any $g\in G$ we have $h_0(p_0g)=h_0(p_0)g=p_0g$ which means that $p_0g\in P_0$. The submanifold $P_0$ is then invariant with respect to the $G$-action and it follows that $P_0$ is composed of fibers of the principal bundle $P$. Let $x$ be an element of $M$. For $p,p'\in\tau^{-1}(x)$ we have $p'=pg$ for some $g\in G$ and therefore $$\tau(h_t(p'))=\tau(h_t(pg))=\tau(h_t(p)g)=\tau(h_t(p)).$$
The $\R$-action descends then to the homogeneity structure $h^{M}$ on $M$ making it a graded bundle $M\rightarrow M_0$ (see Theorem \ref{t5}). The principal bundle $P_0$ has $M_0$ as base, $\zt_0:P_0\to M_0$.
The fact that $G$-action and $\R$-action commute assures that $h_0$ is a $G$-principal bundle morphism with $h^M_0: M\rightarrow M_0$ as a base map. In the diagram
$$\xymatrix{
 P\ar[r]^{h_0}\ar[d]^{\tau} & P_0\ar[d]^{\tau_0} \\
M\ar[r]^{h^M_0} & M_0
}$$
the horizontal arrows represent a principal bundle morphism and the vertical arrows represent a graded bundle morphism. Since for given $x\in M$ and $p_0\in P_0 $, such that
$h_0^M(x)=\tau_0(p_0)$, there is only one $p\in P$ for which $\tau(p)=x$ and $h_0(p)=p_0$,
$$P=M\times_{M_0}P_0\,.$$
Indeed, since principal bundles are locally trivial, we can work with trivial bundles. We have $P=M\ti G$ and $P_0=M_0\ti G$, so $M\ti P_0=M\ti(M_0\ti G)$ and
$$M\ti_{M_0}P_0=M\ti_{M_0}(M_0\ti G)=M\ti G=P\,.$$

\end{proof}

\begin{remark}
A trivial principal bundle $P=M\ti G$ is weighted if and only if $h_0^M:M\to M_0$ is a graded bundle with the homogeneous structure $h^M$, and the $h_t$ action and $G$-action on $P$ read
$$h_t(x,g)=(h_t^M(x),g)\quad\text{and}\quad (x,g)\cdot g'=(x,gg')\,\quad (x,g)\in M\ti G\,,$$
i.e. $P=M\ti G$ is a trivial $G$-principal bundle over a graded bundle $M$.
\end{remark}

\begin{example}\label{44}
Let $P\rightarrow M$ be a principal bundle with the structure group $G$. The group $G$ acts on $\sT P$ by the lifted action. More precisely, if $\phi_g$ denotes the map
$P\ni p\mapsto pg\in P$ then the lifted action of $G$ on $\sT P$ is composed of maps $\sT\phi_g$. It is well known that $A(P)=\sT P\slash G$ is a vector bundle over $M$ with the structure of a Lie algebroid, called the \emph{Atiyah algebroid}. The bundle $\sT P\rightarrow A(P)$ is a weighted principal bundle of degree one, with respect to the canonical homogeneity structure of the tangent bundle $\sT P$ over $P$. Indeed, the map $\sT\phi_g$ is a tangent map, so it is linear on fibers over $P$; in particular, it commutes with multiplication by reals. In the diagram
$$\xymatrix{
\sT P\ar[r]\ar[d] & P\ar[d] \\
A(P)\ar[r] &  M
}$$
the horizontal arrows denote graded bundles, while the vertical arrows denote principal bundles. In the local trivialization $P\simeq M\times G$ the $\phi_g$ action reads
$$\phi_g(x,h)=(x, hg).$$
Applying the tangent functor to $P\simeq M\times G$, we get $\sT P\simeq \sT M\times G\times \mathfrak{g}$, where the element $(v,h,X)$ is tangent to the curve
$t\mapsto (\gamma(t), h\exp(tX))$ with $\dot\gamma(0)=v$. The local formula for $\sT\phi_g$-action reads
$$\sT\phi_g(v,h,X)=(v, hg, Ad_{g^{-1}}(X)),$$
and $A(P)$ in the local trivialization is isomorphic to $\sT M\times_M \mathrm{ad} P$.
Here, $\mathrm{ad} P$ is the adjoint bundle of $P$ associated with the Lie algebra $\mathfrak g$ of $G$ and equipped with the adjoint action of $G$, $\mathrm{ad} P=(P\ti{\mathfrak g})/G$.
The fundamental vector fields of the group action on $P$ are given by the formula
$$\xi_Y(p)=\frac{d}{dt}_{|t=0}p\cdot\exp(tY),$$
which in a local trivialization reads
$$\xi_Y(x,h)=(0_x,h,Y),$$
where $0_x$ is a zero-vector at $x\in M$. The corresponding fundamental vector field of the lifted action is the complete lift $\xi_Y^{(c)}$. We can express it again in local trivialization, applying once more the tangent functor to $\sT P\simeq \sT M\times G\times \mathfrak{g}$ with the result $\sT\sT P\simeq \sT\sT M\times G\times\mathfrak{g}\times\mathfrak{g}\times\mathfrak{g}$. The complete lift $\xi_Y^{(c)}$ in this trivialization reads
$$\xi_Y^{(c)}(v,h,X)=(0_v,h,X;\,Y,[X,Y]),$$
where again $0_v$ is the zero-vector attached at $v\in\sT M$. It commutes with the Euler vector field $\nabla_{\sT P}$
which in this trivialization reads
$$\nabla_{\sT P}(v,h,X)=(\nabla_{\sT M}(v),h,X;\,0, X).$$
\end{example}

\begin{example}
A similar example, this time of a weighted principal bundle of degree k, we get applying the $\sT^k$-functor to the map $\phi_g$ for every $g\in G$. Dividing $\sT^kP$ by the group action, we get $A^k(P)$, i.e. the $k$-th prolongation of the Atiyah algebroid. In the diagram
$$\xymatrix{
\sT^k P\ar[r]\ar[d] & P\ar[d] \\
A^k(P)\ar[r] & M\,,
}$$
the horizontal arrows denote graded bundles while the vertical arrows denote principal bundles. Let us again employ a local trivialization $P\simeq M\times G$. For clarity of notation, we will discuss in the trivialization the case $k=2$. Using the left trivialization for $\sT^2 G$ we get
$$\sT^2 P\simeq \sT^2 M\times G\times\mathfrak{g}[1]\times\mathfrak{g}[2]\,,$$
where we have indicated the weights of the Lie algebra components. The lifted  $G$-action on $\sT^2 P$ in this trivialization reads
$$\sT^2\phi_g(v,h,X,Z)=(v,hg,Ad_{g^{-1}}X,Ad_{g^{-1}}Z)\,,$$
and $A^2(P)$ is then isomorphic to $\sT^2M\times_M\mathrm{ad}P[1]\times_M\mathrm{ad}P[2]$, where again we have indicated the weights of the components.
Since the tangent bundle $\sT\sT^2 P$ can be written as
$$\sT\sT^2 P\simeq \sT\sT^2 M\times G\times\mathfrak{g}[1]\times\mathfrak{g}[2]\times \mathfrak{g}[0,1]\times\mathfrak{g}[1,1]\times\mathfrak{g} [2,1]\,,$$
the fundamental vector field of $G$-action on $\sT^2 P$ reads
$$\xi_Y^{(2)}(u,h,X,Z)=\left(0_u, h,X,Z;\, Y, [X,Y],[Z,Y]\right)\,,$$
while the Euler vector field at point $(u,h,X,Z)$ reads
$$\nabla_{\sT^2 P}(u,h,X,Z)=(\nabla_{\sT^2 M}(u),h,X,Z;\,0, X, 2Z).$$
The other lifts of fundamental vector fields  are
$$\xi_Y^{(1)}(u,h,X,Z)=(0_u,h,X,Z,0,Y,[X,Y])$$
and
$$\xi_Y^{(0)}(u,h,X,Z)=(0_u,h,X,Z,0,0,2Y)\,.$$
\end{example}
\begin{example} Let $P\rightarrow M$ be a weighted principal bundle of degree $k$ with the structure group $G$. We have discussed the principal bundle structure on $\sT P$ over the Atiyah algebroid $A(P)$ with structure group $G$. $\sT P$ carries also a principal bundle structure over $\sT M$, this time with structure group $\sT G$.
Let $h$ denotes the homogeneity structure of degree $k$ on $P$. The fact that $P$ is a weighted principal bundle means that $h_t(pg)=h_t(p)g$. Let now $\gamma$ be a curve on $P$ and $\eta$ a curve on  $G$. For every value of the real parameter $s$ we have $h_t(\gamma(s)\eta(s))=h_t(\gamma(s))\eta(s)$, therefore for $v$ being the vector tangent to $\gamma$ at $s=0$, and $u$ being the vector tangent to $\eta$ at $s=0$, we have
$$\sT h_t(v\cdot u)=\sT h_t(v)\cdot u\,,$$
where $\cdot$ denotes the action of the tangent group $\sT G$ on $\sT P$. This shows that $\sT P$ is a weighted principal bundle of degree $k$ with respect to the lifted homogeneity structure $\dt h$. According to Proposition \ref{prop:principal}, $P$ is diffeomorphic to the fibered product $P\simeq M\times_{M_0} P_0$, where $P_0=h_0(P)$ is a principal bundle over $M_0=h^M_0(M)$. Applying the tangent functor, we get
$$\sT P\simeq \sT M\times_{\sT M_0}\sT P_0, $$
where $\sT P_0\rightarrow \sT M_0$ is a principal bundle with structure group $\sT G$, and $\sT M$ is a graded bundle of degree $k$ with respect to the lifted homogeneity structure $\dt h^M$.
\end{example}

\subsection{Weighted principal connections}
\begin{definition} A \emph{weighted principal connection} in a weighted principal bundle $\tau:P\to M$ is a principal connection such that the horizontal distribution is a weighted distribution.
\end{definition}

\begin{proposition}\label{48}
Weighted principal connections on a weighted principal bundle $P\rightarrow M$ are in a one-to-one correspondence with principal connections on the principal bundle $P_0\rightarrow M_0$, where $P_0=h_0(P)$ and $M_0=h_0^M(M)$ in the notation of Proposition \ref{prop:principal}. The connection one-form $\omega$ on $P$ and the curvature two-form $\Omega$ on $M$ are homogeneous forms of weight $0$. Moreover,
$$\omega=h_0^\ast\omega_0,\qquad \Omega=(h_0^M)^\ast\Omega_0\,,$$
for appropriate connection and curvature forms $\omega_0$ and $\Omega_0$ on the principal bundle $P_0$. In particular, the connection on $P$ is the pull-back of the connection on $P_0$ (cf. Proposition \ref{prop:principal}).
\end{proposition}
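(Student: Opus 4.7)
The plan is to establish the bijection by a pull-back construction and its inverse, and then to read off the homogeneity of the connection and curvature forms directly. Throughout I exploit Proposition \ref{prop:principal}, which identifies $P$ with $M \times_{M_0} P_0$ so that the $G$-action lives only on the $P_0$-factor and the homogeneity structure $h_t(x, p_0) = (h_t^M(x), p_0)$ acts only on the $M$-factor; in particular, $h_0 : P \to P_0$ is a $G$-equivariant surjective submersion satisfying $h_0 \circ h_t = h_0$ for every $t \in \R$, and the $G$-action commutes with each $h_t$.

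For the direction sending connections on $P_0$ to weighted connections on $P$, given a principal connection form $\omega_0 \in \Omega^1(P_0, \mg)$ I set $\omega := h_0^* \omega_0$. Equivariance of $\omega_0$ transfers to $\omega$ because $h_0$ is $G$-equivariant, and $\omega(\xi_Y) = Y$ on fundamental vector fields holds because fundamental vector fields on $P$ are $h_0$-related to those on $P_0$ (the $G$-action commutes with $h_0$). Thus $\omega$ is a principal connection form on $P$. The identity $h_0 \circ h_t = h_0$ gives $h_t^* \omega = \omega$ for all $t$, so $\omega$ has weight $0$ and its horizontal distribution $D = \ker \omega$ is $\sT h_t$-invariant, hence weighted.

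The substantive step is the converse. Given a weighted principal connection form $\omega$ on $P$, I claim $h_t^* \omega = \omega$ for every $t \in \R$. A principal connection form is completely determined by its horizontal distribution together with its values on fundamental vector fields, so it suffices to check these two pieces. For $t \ne 0$ the map $\sT h_t$ is a fibrewise linear isomorphism of $\sT P$; by the weighted condition and a rank count it satisfies $\sT h_t(D) = D$, while
\[
\sT_p h_t (\xi_Y(p)) \;=\; \tfrac{\rmd}{\rmd s}\Big|_{s=0} h_t(p\cdot \exp(sY)) \;=\; \tfrac{\rmd}{\rmd s}\Big|_{s=0} h_t(p)\cdot \exp(sY) \;=\; \xi_Y(h_t(p))
\]
uses only $G$-equivariance of $h_t$ and shows that fundamental vector fields are $h_t$-related to themselves. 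Consequently $h_t^* \omega$ vanishes on $D$ and agrees with $\omega$ on the vertical bundle, hence $h_t^* \omega = \omega$ for $t \ne 0$, and continuity in $t$ extends this to $t = 0$. Factoring $h_0 = \iota \circ h_0$ with $\iota : P_0 \hookrightarrow P$ the inclusion then gives $\omega = h_0^* \omega_0$ for $\omega_0 := \iota^* \omega$; a direct check that $P_0$ is $G$-invariant and that fundamental vector fields of $P$ restrict to those of $P_0$ confirms that $\omega_0$ is a principal connection on $P_0 \to M_0$. The two constructions are mutually inverse, since injectivity of $h_0^*$ on forms follows from $h_0$ being a surjective submersion.

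The curvature statement then follows by naturality: applying pull-back to the structure equation $\Omega = \rmd \omega + \tfrac12[\omega, \omega]$ yields the corresponding identity for the $\mg$-valued curvature form on $P$, and since the curvature is basic the relation descends via the commuting diagram of Proposition \ref{prop:principal} to the bases as $\Omega = (h_0^M)^* \Omega_0$; homogeneity of degree $0$ of $\Omega$ is then immediate. The main obstacle is the rigidity argument $h_t^* \omega = \omega$ in the converse direction; once this is in hand, every remaining claim is bookkeeping via pull-back.
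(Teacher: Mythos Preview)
Your argument is correct. It takes a different but equivalent route from the paper: you work throughout with the connection one-form $\omega$, whereas the paper works directly with the horizontal distribution $\sH$. In the paper's converse direction one sets $\sH^0(p)=\sH(x_0,p)\cap\sT_pP_0$, checks $G$-invariance and transversality to the vertical, then uses $\sT h_t$-invariance of $\sH$ together with a dimension count to conclude $\sH=(\sT h_0)^{-1}(\sH^0)$. Your converse instead establishes the single identity $h_t^*\omega=\omega$ (for $t\ne 0$ from $\sT h_t(D)=D$ and $h_t$-relatedness of fundamental vector fields, then $t=0$ by continuity of the smooth $(\R,\cdot)$-action), which simultaneously yields the weight-$0$ statement and the factorisation $\omega=h_0^*\omega_0$ with $\omega_0=\iota^*\omega$. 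This is a little slicker: no explicit dimension argument is needed, and the curvature identity drops out of naturality of pull-back applied to the structure equation. The paper's distribution picture, on the other hand, stays closer to the definition of ``weighted distribution'' and makes the product decomposition $P\simeq M\times_{M_0}P_0$ more visibly at work. Two cosmetic points: the overloading in ``$h_0=\iota\circ h_0$'' is harmless but worth flagging as abuse of notation, and the remark on injectivity of $h_0^*$ is redundant since you have already exhibited both composites as the identity.
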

\begin{proof} From Proposition (\ref{prop:principal}) we know that weighted $G$-principal bundle is of the form $M\times_{M_0}P_0$, where $P_0$ is a $G$-principal bundle over $M_0$, and $M$ is a graded bundle over $M_0$ with the homogeneity structure $h^M$. The tangent bundle $\sT P$ is then isomorphic to $\sT M\times_{\sT M_0}\sT P_0$, more precisely
$$\sT P\simeq \sT M\times_{\sT M_0}\sT P_0=\{(v,u):\; v\in \sT M, u\;\in\sT P_0,\; \sT h^M_0(v)=\sT\tau_0(u)\}\,.$$
Let $\sH^0$ denote the horizontal distribution of a principal connection on $P_0$, i.e.
$$\sT_pP_0=\sV^0(p)\oplus\sH^0(p)\,,$$
where $\sV^0(p)$ is the subspace of vectors tangent at $p$ to the fibre of $P_0$, and $\sH^0(p)$ satisfies the condition $\sH^0(pg)=\sH^0(p)g$ for every $g\in G$. Let $\sH$ denote the following distribution on $P$,
$$\sH=(\sT h_0)^{-1}(\sH^0)=\{(v,u): v\in \sT M, u\;\in\sH^0,\; \sT h^M_0(v)=\sT\tau_0(u)\}.$$
We claim that $\sH$ is a weighted distribution and defines a principal connection in $P$. It is easy to check that $\sH$ is a distribution. Then, we observe that
$\sT h_t(v,u)=(\sT h_t^M(v), u)$, and since $\sT h^M_0(\sT h_t^M(v))=\sT h_0^M(v)$, it follows that $\sT h_t(v,u)$ is an element of $\sH$, for elements $(v,u)$ of $\sH$. It means that $\sH$ is weighted. For $g\in G$ we have
$(v,u)g=(v, ug)$, which gives us the $G$-invariance of $\sH$ provided $\sH^0$ is $G$ invariant. At each point $(x,p)$, the vectors tangent to the fibre over $x$ are of the form $\sV(x,p)=\{(0_x,u): u\in \sV^0(p)\}$. Therefore,
there is the splitting
$$\sT_{(x,p)} P\simeq \sV(x,p)\oplus\sH(x,p)\,,$$
defining the principal connection in $P$.

Conversely, assume that we have a principal weighted connection in $P$ with horizontal distribution $\sH$, i.e.
$$\sT_{(x,p)}P=\sV(x,p)\oplus\sH(x,p)\,.$$
Since the fiber of $P$ over $x$ equals the fiber of $P_0$ over $x_0=h^M_0(x)$, we have as previously $\sV(x,p)=\{(0_x,u): u\in \sV^0(p)\}$. We can then identify $\sV(x,p)$ with $\sV^0(p)$. We define $\sH^0(p)=\sH(x_0,p)\cap\sT_pP_0$. It is clear that $\sH^0(pg)=\sH^0(p)$, since $\sH$ is $G$-invariant. From the fact that the intersection of $\sV(x_0,p)$ and $\sH(x_0,p)$ is trivial, it follows that the intersection of $\sV^0(p)$ and $\sH^0(p)$ is also trivial. The principal weighted connection in $P$ defines then the principal connection on $P_0$. Moreover, $\sH$ is invariant with respect to $h_t$, which means that if $(v,u)\in \sH$, then $h_t(v,u)=(h^M_t(v),u)\in \sH$. It follows that $\sT h_0(\sH(x,p))=\sH^0(p)$ for all
$x$ over $x_0=\tau_0(p)$. The dimensional considerations show that $\sH=(\sT h_0)^{-1}(\sH^0)$.

We have shown that principal weighted connections on $P$ define principal connection on $P_0$, and the other way round. The horizontal distributions of these two connections satisfy $\sH=(\sT h_0)^{-1}(\sH^0)$, which means that the connection and curvature forms on $P$ are given by pull-backs of the connection and curvature forms on $P_0$. The latter contains weight-zero coordinates only, therefore $\omega$ and $\Omega$ are homogeneous of weight zero.

\end{proof}
\begin{example}
Let us consider a principal bundle $P$ of orthonormal oriented frames on a sphere $S^2\subset \R^3$, with the standard action of the group $SO(2)$. The base manifold of $P$ is of course $S^2$ itself. The projection will be denoted by $\pi: P\rightarrow S^2$. On the other hand, we can consider a point $n$ of the sphere as a unit vector $\vec{n}$ perpendicular to the sphere at the point $n$. This vector, together with an orthonormal frame at point $n$, form an orthonormal frame in $\R^3$. In this sense, $P$ is the space of orthonormal oriented frames in $\R^3$, and therefore the homogeneous space for the $SO(3)$ action. $\sT P$ can be now written as $P\times \mathfrak{so}(3)$, or even $P\times \R^3$ if we use the fact that the Lie algebra $\mathfrak{so}(3)$ is isomorphic to $\R^3$ with vector product $\vec{v}\times \vec{w}$ as the Lie bracket. The vector
$\vec{v}=v^1\vec{e}_1+v^2\vec{e}_2+v^3\vec{e}_3$ in the canonical basis in $\R^3$ corresponds to the matrix
$$v=\left[\begin{array}{ccc}
0 & -v^3 & v^2 \\
v^3 & 0 & -v^1 \\
-v^2 & v^1 & 0
\end{array}
\right]\in\mathfrak{so}(3).$$
The canonical scalar product in $\R^3$ in matrix form reads $(\vec{v}|\vec{w})=-\frac{1}{2}tr(vw)$.
Let $\vec{v}$ \ be an element of $\R^3$ being the tangent vector at $p\in P$ over the point $n\in S^2$. The map $\sT\pi: \sT P\rightarrow \sT S^2$ reads
$$\sT\pi(p,\vec{v})=(n,\vec{n}\times \vec{v}),$$
where we consider $\sT S^2$ as a subset of $\sT \R^3=\R^3\times \R^3$. The principal connection on $P$ can be defined by means of the canonical scalar product of $\R^3$:
the horizontal space at point $p$ over $n$ is $\sH_p=\langle \,\vec{n}\,\rangle^\perp$. One can check that this is indeed a principal connection on $P$. Due to the Cartesian product structure in $\sT P$, we have a distinguished set of vector fields on $P$, namely constant vector fields: $X_{\vec{v}}(p)=(p,\vec{v})$. One can check that the Lie bracket of such vector fields $X_{\vec{v}}$ and $X_{\vec{w}}$ is also a constant vector field  $X_{\vec{v}\times \vec{w}}$. The horizontal part of $X_{\vec{v}}$ reads
$$p\longmapsto (p,\,\vec{v}-(\vec{n}|\vec{v})\vec{n})\,,$$
while the connection one-form $\omega$ and curvature two-form $\Omega$ are given by
$$\omega(p,\vec{v})=(\vec{n}|\vec{v})\vec{n}, \qquad \Omega((p,\vec{v}),(p,\vec{w}))=-(n|\vec{v}\times \vec{w})\vec{n}.$$
In the above formula $\pi(p)=n$.
The values of $\omega$ and $\Omega$ are vertical vectors that can be identified with $\mathfrak{so}(2)\simeq \R$.

Let us now follow the example (\ref{44}) and consider $\sT P$ as an $SO(2)$-principal bundle over the Atiyah algebroid of the bundle $P$, i.e. $A(P)\simeq S^2\times \R^3$. In the diagram
$$\xymatrix{
P\times \R^3\ar[r]\ar[d] & P\ar[d] \\
S^2\times \R^3\ar[r] & S^2
}$$
the horizontal arrows denote graded bundles of weight $1$, i.e. vector bundles, while the vertical arrows denote principal $SO(2)$-bundles. According to Proposition \ref{48}, the horizontal distribution on $\sT P$ is the inverse image of the horizontal distribution on $P$ by $h_0$, which in this case coincides with
$\tau_P$. Since $\sT P\simeq P\times \R^3$ is a trivial bundle, we have
$$\sT \sT P\simeq P\times \R^3\times \R^3\times\R^3\,.$$
In this trivialization, $\tau_{\sT P}$ is the projection onto the first and second factor, and $\sT\tau_P$ is the projection onto the first and third factor. The horizontal distribution
$\sH^{\sT}\subset\sT\sT P$ of the principal connection on the bundle $\sT P\rightarrow A(P)$ reads
$$\sH^{\sT}=(\sT\tau_P)^{-1}(\sH)=\left\{(p,\vec{u},\vec{v},\vec{w}):\; (\vec{v}|\vec{n})=0, \,\vec{n}=\pi(p) \right\}.$$
It is easy to see that $\sH^{\sT}$ is indeed a double vector subbundle of $\sT\sT P$, therefore it defines a weighted connection on $\sT P$.

Note that the tangent lift $\sH^{(1)}$ of the distribution $\sH$, which is spanned by the vertical and complete lifts of horizontal vector fields on $P$, does not coincide with $H^{\sT}$; we have only $\sH^{(1)}\subset \sH^{\sT}$. The dimension of $\sH^{(1)}$  is four, while the dimension of $\sH^{\sT}$ is five. For the horizontal part $X^h_{\vec{v}}$ of the vector field $X_{\vec{v}}$, we have the vertical lift in the form
$$\left(X^h_{\vec{v}}\right)^{(0)}(p,\vec{u})=(\, p,\,\, u,\, 0,\, \vec{v}-(\vec{n}|\vec{v})\vec{n}\, )\,,$$
and the complete lift
$$\left(X^h_{\vec{v}}\right)^{(c)}(p,\vec{u})=(\, p,\, u,\, \vec{v}-(\vec{n}|\vec{v})\vec{n},\,  \vec{u}\times \vec{v}-(\vec{n}|\vec{u}\times\vec{v})\vec{n}\, ),$$
where, as usual, $n=\pi(p)$.
The distribution $\sH^{(1)}$ reads then
$$\sH^{(1)}=\left\{(p,\vec{u},\vec{v},\vec{w}):\; (\vec{v}|\vec{n})=0=(\vec{w}|\vec{n}),\, \vec{n}=\pi(p) \right\}.$$
It defines a principal connection {\cm on the} bundle $\sT P\rightarrow\sT S^2$ with the action of the tangent group $\sT SO(2)$, however this connection is not weighted.
\end{example}

\small{\vskip1cm}

\noindent Katarzyna GRABOWSKA\\
Faculty of Physics \\
University of Warsaw\\
Pasteura 5,
02-093 Warszawa, Poland
\\Email: konieczn@fuw.edu.pl\\

\noindent Janusz GRABOWSKI\\ Institute of
Mathematics\\  Polish Academy of Sciences\\ \'Sniadeckich 8, 00-656 Warszawa, Poland
\\Email: jagrab@impan.pl \\

\noindent Zohreh RAVANPAK\\ Institute of
Mathematics\\  Polish Academy of Sciences\\ \'Sniadeckich 8, 00-656 Warszawa, Poland
\\Email: zravanpak@impan.pl \\

\section{Declarations}

\textbf{Funding:} Research of JG founded by the  Polish National Science Center grant
under the contract number 2016/22/M/ST1/00542.

\noindent\textbf{Conflicts of interest/Competing interests:} No conflicts of interests.

\noindent\textbf{Availability of data and material:} My manuscript has no associated data.

\noindent\textbf{Code availability:} No code required.

\noindent\textbf{Authors' contributions:} Equal.

{\end{document}